\renewcommand{\arraystretch}{1.3}
\theoremstyle{plain}
\newtheorem{thm}{Theorem}[section]
\newtheorem{lem}[thm]{Lemma}
\newtheorem{prop}[thm]{Proposition}
\newtheorem{cor}[thm]{Corollary}
\theoremstyle{definition}
\newtheorem{defn}[thm]{Definition}
\newtheorem{exa}[thm]{Example}
\theoremstyle{remark}
\newtheorem{rem}[thm]{Remark}
\numberwithin{equation}{section}
\renewcommand{\d} {\mathrm{d}}
\renewcommand{\div} {\mathrm{div}}
\newcommand{\wh} [1]{\widehat{#1}}
\newcommand{\wt} [1]{\widetilde{#1}}
\newcommand{\ul} [1]{\underline{#1}}
\newcommand{\mbf} [1]{\mathbf{#1}}
\newcommand{\mbb} [1]{\mathbb{#1}}
\newcommand{\mc} [1]{\mathcal{#1}}
\newcommand{\mf} [1]{\mathfrak{#1}}
\renewcommand{\i} {\mathrm{i}}
\newcommand{\e} {\mathrm{e}}
\renewcommand{\bigwedge}{\scaleobj{1.2}{\wedge}}
\renewcommand{\bigotimes}{\scaleobj{1.2}{\otimes}}
\renewcommand{\bigodot}{\scaleobj{1.2}{\odot}}
\newcommand{\hook}{\makebox[7pt]{\rule{6pt}{.3pt}\rule{.3pt}{5pt}}\,}
\newcommand{\Ann} {\mathrm{Ann}}
\newcommand{\gr} {\mathrm{gr}}
\newcommand{\su} {\mathfrak{su}}
\newcommand{\so} {\mathfrak{so}}
\newcommand{\co} {\mathfrak{co}}
\newcommand{\g} {\mathfrak{g}}
\newcommand{\h} {\mathfrak{h}}
\newcommand{\p} {\mathfrak{p}}
\newcommand{\R} {\mathbf{R}}
\newcommand{\C} {\mathbf{C}}
\newcommand{\U}{\mathbf{U}}
\newcommand{\SU}{\mathbf{SU}}
\renewcommand{\O}{\mathbf{O}}
\newcommand{\SO}{\mathbf{SO}}
\newcommand{\CO}{\mathbf{CO}}
\newcommand{\Sim}{\mathbf{Sim}}
\newcommand{\V}{{\mathbb V}}
\newcommand{\K}{{\mathbb K}}
\begin{document}

	\title
	{Optical geometries}

	\begin{abstract}
		We study the notion of optical geometry, defined to be a Lorentzian manifold equipped with a null line distribution, from the perspective of intrinsic torsion. This is an instance of a non-integrable version of holonomy reduction in Lorentzian geometry. These generate   congruences of null curves, which play an important r\^{o}le in general relativity. Conformal properties of these are investigated. We also extend this concept to generalised optical geometries as introduced by Robinson and Trautman.
	\end{abstract}
	
	\date{\today\ at \xxivtime}
	
	\author{Anna Fino}\address{Dipartimento di Matematica ``G. Peano'', Universit\`{a} degli studi di Torino,  Via Carlo Alberto 10, 
		10123 Torino, Italy.  \& Department of Mathematics and Statistics, Florida International University, Miami, FL 33199, United States}	\email{annamaria.fino@unito.it, afino@fiu.edu}
	\author{Thomas Leistner}\address{School of Computer and Mathematical Sciences, University of Adelaide, SA 5005, Australia}\email{thomas.leistner@adelaide.edu.au}
	\author{Arman Taghavi-Chabert}\address{Institute of Physics, {\L}\'{o}d\'{z} University of Technology, W\'{o}lcza\'{n}ska 217/221, 90-005 {\L}\'{o}d\'{z}, Poland}\email{arman.taghavi-chabert@p.lodz.pl}
	
	\thanks{AF was supported by GNSAGA of INdAM, by PRIN 2017 \lq \lq Real and Complex Manifolds: Topology, Geometry and Holomorphic Dynamics" and by the grant \# 944448 from the Simons Foundation.
		TL was supported by
		the Australian Research
		Council (Discovery Program DP190102360). ATC \& TL declare that this work was partially supported by the grant 346300 for IMPAN from the Simons Foundation and the matching 2015-2019 Polish MNiSW fund. The research of ATC leading to these results has received funding from the Norwegian Financial Mechanism 2014-2021 UMO-2020/37/K/ST1/02788. He has received funding from the GA\v{C}R (Czech Science Foundation) grant 20-11473S. He was also supported by a long-term faculty development grant from the American University of Beirut for his visit to IMPAN, Warsaw, in the summer 2018, where parts of this research was conducted.}
	\subjclass[2010]{Primary 53C50, 53C10; Secondary 53B30, 53C18}
	\keywords{Lorentzian manifolds, almost Robinson structures, G-structure, intrinsic torsion, congruences of null geodesics, conformal geometry, almost CR structures}
	
	\maketitle
	\tableofcontents

	\section{Introduction}
	One of the most natural geometric structures that one can study on a Lorentzian manifold $(\mc{M},g)$ of dimension $n+2$ is a null line distribution $K$, say. This is what we shall term an \emph{optical structure}, and it is equivalent to a reduction of the structure group of the frame bundle to $\Sim(n)$, the stabiliser of a null direction in $\R^{n+1,1}$.
	The aim of this article will be to give a comprehensive study of $\Sim(n)$-structures, including their \emph{intrinsic torsion}, which measures the extent of their non-integrability. Integrability here is equivalent to $K$ being parallel with respect to the Levi-Civita connection.
	
	The purpose of this article is two-fold. First it provides a formal approach to well-known concepts of mathematical relativity in relation with \emph{congruences of null curves}, i.e., the foliation tangent to $K$. Second, it lays out foundational material for further works in which the structure group of the frame bundle is reduced further to a subgroup of $\Sim(n)$ when $n=2m$. In particular, it should be viewed as an introductory article to the follow-up paper \cite{Fino2023} on almost Robinson manifolds, 
	where the structure group is reduced to $(\R^*\times \U(m))\ltimes (\R^n)$. The study of such structures in arbitrary even dimension goes back to Hughston and Mason \cite{Hughston1988}, and  the notion of an (almost) Robinson structure was introduced by Nurowski and Trautman in \cite{Nurowski2002,Trautman2002,Trautman2002a}. In fact, this reduction is vacuous in dimension four, where an almost Robinson structure is identical to an optical structure (for further comments on terminology, see Remark \ref{term-remark}). Here, the ramifications into complex geometry and CR geometry would prove crucial in the formulation of twistor theory on the one hand \cite{Penrose1967,Penrose1986}, and particularly powerful in the understanding of solutions to the Einstein field equations on the other hand \cite{Robinson1986,nurowski93-phd,Lewandowski1990,Nurowski1997}.

	In mathematical relativity, congruences of null geodesics are fundamental in the study of the Einstein field equations, especially in dimension four, but also in higher dimensions, in the null alignment formalism of \cite{Coley2003,Milson2005,Durkee2010}. The invariant properties of a congruence of null \emph{geodesics} are encoded in the so-called \emph{optical scalars} in dimension four, and \emph{optical matrices} in higher dimensions, namely the expansion, twist and shear of the congruence. These correspond to $\Sim(n)$-invariant irreducible pieces of the intrinsic torsion of the $\Sim(n)$-structure. For instance, the existence of \emph{non-shearing} congruences is intimately connected to solutions to field equations such as the Einstein equation or the Maxwell equation in vacuum via central results such as the Goldberg-Sachs theorem \cite{Goldberg1962,Goldberg2009,GoverHillNurowski11} and the Robinson theorem \cite{Robinson1961} --- see also \cite{nurowski96}.
	
	Earlier investigations on the classification of intrinsic torsions on pseudo-Riemannian manifolds can be found in \cite{Taghavi-Chabert2016,Taghavi-Chabert2017a,Papadopoulos2019,FigueroaOFarrill2020} -- see also in \cite{Petit2019} for the construction of connections adapted to totally null distributions.
	
	The structure of the paper is as follows. In Section \ref{sec:G-structures}, we give a general framework for the investigation of reductions of the structure group in Lorentzian geometry. In Section \ref{sec:algebra}, we obtain a description of the intrinsic torsion of a $\Sim(n)$-structure in terms of irreducibles. This is applied to the geometric setting in Section \ref{sec:geometry} where we introduce optical geometries and examine its relation to congruences of null curves and their leaf spaces. In particular, a non-integrable optical geometry with congruence of null geodesics can therefore fall into eight distinct classes according to whether the expansion, twist or shear vanishes or not. In Section \ref{sec:conf-opt-str}, we substitute a conformal structure for the Lorentzian structure, and defines an optical structure in that setting. Section \ref{sec:four_dim} briefly describes the situation in dimension four. Finally, we revisit the notion of (generalised) optical geometry first considered in \cite{Robinson1985} in Section \ref{sec:gen_opt}: these consist in weakening the conformal structure to a certain equivalence class of Lorentzian metrics on a smooth manifold. In this case, the structure group is a Lie subgroup of $\mathbf{GL}(n+2,\R)$ stabilising a filtration, and a conformal structure on its associated screen bundle.
	
	Throughout, we provide a number of examples to illustrate the algebraic conditions that the intrinsic torsion of an optical geometry can satisfy. The reference \cite{Stephani2003} contains a plethora of such solutions in dimension four where the properties of the intrinsic torsion can easily be inferred in terms of congruences of null geodesics. In higher dimension, the review article \cite{Ortaggio2013} is a convenient source.

	\section{$H$-structures and their intrinsic torsion}\label{sec:G-structures}
	Let $(\mc{M},g)$ be a semi-Riemannian manifold of dimension $m$ and signature $(s,t)$ with the convention that $t$ is the number of negative eigenvalues of the metric $g$, and $s$ is  the number of positive ones. Let $G:=\SO^0(s,t)$ be the connected component of the group of orthogonal transformations of $\R^{s,t}$, and $H\subset G$ a closed subgroup
	of $G$. Then an {\em $H$-structure on $(\mc{M},g)$} is a reduction of the $G$-bundle of orthonormal frames $\mc F^G$ to an $H$-bundle $\mc F^H$. We do {\em not} assume that the Levi-Civita connection of $g$ reduces to $H$, and if it does not, an $H$-structure is called {\em non-integrable}. 
	
	We now recall the notion of {\em intrinsic torsion} of an $H$-structure. This requires some algebraic preliminaries in which we use abstract index notation. To this end we set $\V=\R^{s,t}$ and denote by $\h$ and $\g=\so(s,t)=\so(\V)$ the respective Lie algebras of $H$ and $G$, all of which are $H$-modules in a canonical way. Elements of $\V$ will be adorned with upper lower-case Roman indices starting from the beginning of the alphabet, e.g.\ $v^a, w^b \in \V$, and elements of the dual $\V^*$ by lower indices, e.g.\ $\alpha_{a}, \beta_{b} \in \V^*$. The Minkowski inner product on $\V$ will be denoted $g_{a b}$, with inverse $g^{a b}$, and will be used to lower and raise indices. 
	
	We introduce the $G$-module homomorphism 
	\begin{align*} \delta_\g: \V^*\otimes \g \ni A_{ab}^{\ \ c}\mapsto A_{[ab]}^{\ \ \ \ c}\in  \bigwedge^2\V^* \otimes \V,\end{align*} 
	where $ A_{[ab]}^{\ \ \ c}=\tfrac{1}{2}( A_{ab}^{ \ \ c}- A_{ba}^{ \ \ c})$ denotes the skew-symmetrisation of $A_{ab}^{ \ \ c}$ in the covariant components.  Using that  $\V^*\otimes \g \simeq \V^*\otimes \bigwedge^2\V^*$, a direct computation will reveal that the first prolongation
	$$\g^{(1)} := \{ A_{a b}{}^{c} \in \V^*\otimes \g : A_{[a b]}{}^{c} = 0 \}$$
	of $\g$ is zero (see e.g.\ \cite{Salamon1989}). Thus, $\delta_\g$ is an isomorphism, and in particular, the restriction $\delta_\h$ of $\delta_\g$ to $\V^*\otimes \h$ is an isomorphism onto its image $\mathrm{Im}(\delta_\h)$. We then obtain the following commutative
	diagram  of $H$-modules:
	\begin{equation}\label{isom}
		\begin{array}{rcl}
			\delta_{\g}\ :\ \V^*\otimes \g &\simeq& \bigwedge^2\V^*\otimes \V\\[2mm]
			\pi\ \downarrow\quad&&\quad\downarrow \ \hat \pi \\[2mm]
			\mbb{G}:=\V^*\otimes (\g/\h)
			&\simeq &
			\hat{\mbb{G}} :=\left( \bigwedge^2 \V^*\otimes \V\right)/_{\mathrm{Im}(\delta_\h) }\\[ 2mm]
			\alpha\otimes (A+ \h) &\mapsto & \delta_\g(\alpha\otimes A)+ \mathrm{Im}(\delta_\h),\end{array}\end{equation}
	where $\pi$ and $\hat \pi$ denote the canonical projections, and $A\in \g$.
	
	Given an $H$-structure $\mc F^H$, we can now associate to every $H$-module $\mbb{A}$  the corresponding vector bundle on $\mc{M}$ with fibre $\mbb{A}$, which we denote by $\mc F^H(\mbb{A})=\mc F^H\times_H \mbb{A}$, e.g., $T \mc{M}=\mc F^G(\V)=\mc F^H(\V)$, etc. Moreover, to every connection of $\mc F^H$ we can associate the torsion of the corresponding covariant derivative $\nabla$, which we denote by $T^\nabla\in \Gamma(\bigwedge^2 T^* \mc{M}\otimes T \mc{M})$. Note that since $H\subset G$, $\nabla$ is compatible with the metric $g$. The {\em intrinsic torsion of the $H$-structure} is then defined as 
	\begin{align*} T^H:=\hat \pi(T^\nabla)\ \in\  \Gamma(\mc F^H(\mbb{G})),\end{align*} 
	where $\hat \pi$ is the vector bundle projection induced from the projection $\hat \pi$ in the diagram~(\ref{isom}). This definition is independent of the chosen $\h$-valued connection $\nabla$: if $\nabla^\prime $ is another $\h$-valued connection, then $\nabla-\nabla^\prime$ is a section of $T^* \mc{M}\otimes \mc F^H(\h)$ and   $T^\nabla-T^{\nabla^\prime}=\delta_\h(\nabla-\nabla^\prime) $, which projects to zero under $\hat{\pi}$.
	
	Although the Levi-Civita connection $\nabla^g$ is not an $\h$-valued connection for non-integrable $H$-structures, it can be used to describe the intrinsic torsion using the geometric version of the diagram~(\ref{isom})
	\begin{equation}\label{isom1}
		\begin{array}{rcl}
			\delta_{\g}\ :T^* \mc{M} \otimes \mc F^H( \g) &\simeq& \bigwedge^2T^* \mc{M}\otimes T^* \mc{M}\\[ 2mm]
			\pi\ \downarrow\quad&&\quad\downarrow \ \hat \pi \\[ 2mm]
			T^* \mc{M}\otimes \mc F^H(\g/\h)
			&\simeq &
			\mc F^H(\hat{\mbb{G}})
			\, .
	\end{array}\end{equation}
	Indeed, for a given $\h$-connection $\nabla$, we have that $C:=\nabla-\nabla^g$ is a section of $T^* \mc{M}\otimes \mc F^H(\g)
	\subset 
	T^* \mc{M}\otimes T^* \mc{M}\otimes T \mc{M}$, which is isomorphic via the metric $g$ to $
	T^* \mc{M}\otimes\bigwedge^2  T^* \mc{M}$. Then, as $\nabla^g$ is torsion free,  $\delta_\g(C)$ is just the torsion of $\nabla$ and projects under $\hat \pi$ to the intrinsic torsion $T^H$. Going down the left path in the diagram~(\ref{isom1}), the intrinsic torsion $T^H$ can be identified with $\pi(C)$.  
	In fact for $p\in \mc{M}$ we have that
	\begin{align*} 
		C|_p=g^{b c}g(\nabla \sigma_a- \nabla^g \sigma_a,\sigma_c)\sigma^a\otimes \sigma_b |_p,\end{align*} 
	where $(\sigma_1,\ldots,\sigma_m)$ is a local section of $\mc F^H$ and $g^{a b}$ is the inverse matrix of $g_{a b}=g(\sigma_a,\sigma_b).$
	If $\nabla$ is $\h$-connection, then for $v\in T_p \mc{M}$, the matrix $(g^{b c}g (\nabla_v\sigma_a, \sigma_c)|_p)_{a,b=1}^m$  is in $\h$. On the other hand,  $(g^{b c}g (\nabla^g_v\sigma_a, \sigma_c)|_p)_{a,b=1}^m$ is a matrix in $\g$, and hence
	the intrinsic torsion can be identified with $T^H\in T^* \mc{M}\otimes \mc F^H(\g/\h),$ defined by
	\begin{align}\label{eq-intors-gen}
		T^H(v)|_p=
		\left[(\sigma_1, \ldots, \sigma_m),
		\pi \left((g^{b c}g (\nabla^g_v\sigma_a, \sigma_c)|_p)_{a,b=1}^m\right)\right]\in \mc F^H(\g/\h)=\mc F^H\times_H\g/\h \, ,
	\end{align} 
	where $v\in T_pM$, and  $\pi:\g\to\g/\h$ is the canonical projection.

	This can be made more explicit in the special case when $g$ is Riemannian, i.e. when $\g=\so(m)$.  In this case $\h$ has an $H$-invariant complement\footnote{This also holds in the more general situation when $g$ is not positive definite but $\h$ is reductive} $\h^\perp$ within $\g$, given by the orthogonal complement of $\h$ with respect to the negative definite Killing form of $\g$. Then we can identify $\mbb{G}$ with $\V^*\otimes \h^\perp$. In this case one can define a distinguished  $\h$-connection $\nabla$ by projecting the Levi-Civita connection onto $\h$ with respect to the decomposition $\g=\h\oplus\h^\perp$. This connection is usually called the {\em characteristic connection of the $H$-structure}.
	
	If in addition $H$ is given by the stabiliser of a tensor $\xi$, for example, a stable $3$-form in the case of $\mbf{G}_2$-structures, or a K\"ahler form in the case of $\U(n)$-structures, one can show \cite{Gray1980,Fernandez1982, Salamon1989} that $\nabla \xi$ can be identified with a section of $\mc F^H(\mbb{G})\simeq T^* \mc{M}\otimes \mc F^H(\h^\perp)$. The decomposition of $\V^* \otimes \h^\perp$ into irreducible $H$-modules then gives rise to the Gray-Hervella and Gray-Fern\'andez classifications of almost Hermitian structures and non-integrable $\mbf{G}_2$-structures, respectively. In the present article we shall follow the same approach for $(n+2)$-dimensional Lorentzian manifolds, i.e., with $\h\subset \g=\so(n+1,1)$ for $\h$ being a Lorentzian holonomy algebra acting indecomposably on $\R^{n+1,1}$. The crucial difference however is that $\h$ does not have an $H$-invariant complement in $\g$, so we will have to work with $\g/\h$.

	\section{Algebraic description}\label{sec:algebra}
	\subsection{Linear algebra}\label{algsec}
	Let $\V:=\R^{n+1,1}$ be the Minkowski space of dimension $n+2$. An \emph{optical structure} on $\V$ is a one-dimensional vector subspace $\K$ of $\V$ that is null with respect to the Minkowski inner product $\langle \cdot, \cdot \rangle$ on $\V$, i.e., $\langle k , k \rangle = 0$ for any vector $k$ in $\V$. This subspace $\K$ is contained in its orthogonal complement $\K^\perp$, i.e.\
	\begin{align}\label{eq:K-filtration}
		\K \subset \K^\perp \subset \V.
	\end{align}
	The \emph{screen space} of $\K$ is the quotient space $\mbb{H}_\K := \K^\perp / \K$. It inherits a non-degenerate symmetric bilinear form given by
	\begin{align*}
		\langle v + \K , w + \K \rangle_{\mbb{H}_\K} & := \langle v,w \rangle_\V  \, , & \mbox{for any $v, w \in \K^\perp$.}
	\end{align*}
	Since $\K$ is null,  this is independent of the choice of representatives in $\K^\perp/\K$. 
	
	We may choose a null line $\mbb{L}$ in $\V$ that is dual to $\K$ so that $\V = \K \oplus \left( \K^\perp \cap \mbb{L}^\perp \right) \oplus \mbb{L}$, and introduce a semi-null basis $(e_0, e_1,\ldots , e_n, e_{n+1})$ such that $\K = \mathrm{span}(e_{n+1})$, $\mbb{L} = \mathrm{span}(e_{0})$, $\K^\perp \cap \mbb{L}^\perp = \mathrm{span}(e_1, \ldots , e_n)$, and
	\begin{align*} 
		\left( \langle e_a, e_b\rangle_{n+1,1} \right)_{a,b=0}^{n+1} =
		\begin{pmatrix}
			0 & 0 & 1\\
			0 & \mbf{1}_n & 0\\
			1 & 0 & 0
		\end{pmatrix}
		\, .
	\end{align*}
	In particular, $\langle e_0,e_{n+1}\rangle =1$, $e_0$ and $e_{n+1}$ are null, i.e., $\langle e_0 , e_0 \rangle = \langle e_{n+1} , e_{n+1} \rangle = 0$, and $(e_1 , \ldots , e_{n})$ are orthonormal, i.e., $\langle e_i , e_j \rangle = \delta_{i j}$ for $i,j=1, \ldots, n$. By a slight abuse of notation we denote by
	$\O(n+1,1)$ all  $(n+2)\times(n+2)$ matrices $A$ such that $\langle A v , A w \rangle = \langle v , w \rangle$,  for any $v,w$ in $\V$, by $\SO(n+1,1)$ the set of all matrices in $\O(n+1,1)$ that have determinant one, and by $\SO^0(n+1,1)$ its connected component.
	
	The stabiliser of the line $\K$ in $\V$, and thus of the filtration \eqref{eq:K-filtration}, in $\SO(n+1,1)$ is the maximal parabolic subgroup given by
	\begin{eqnarray*}
	\Sim(n) &=& \CO(n)\ltimes (\R^n)^* \ =\  (\R^* \times \SO(n))\ltimes (\R^n)^*
	\\
	& = &
	\left\{(a,A,\xi):=
	\begin{pmatrix} a & 0 & 0  \\
		-a A\xi^\top &A & 0 \\
		-\tfrac{a}{2} \xi\xi^\top & \xi &a^{-1}
	\end{pmatrix} 
	\mid \begin{array}{l} a \in \R^* \, ,\\
		A \in \SO(n) \, ,\\
		\xi\in (\R^n)^*
	\end{array}
	\right\} \, .
\end{eqnarray*}
	Here $\CO (n)=\R^*\times \SO(n)$ is the special conformal group of Euclidean space $\R^n$, where we use the identification of $(a,A,0)\in \R^*\times \SO(n)\subset \Sim(n)$ with $a A \in \CO(n)$.
	The notation $\Sim(n)$ comes from the fact that it acts as the group of similarity transformations of $\R^n$, given by homotheties (excluding reflections) and translations.
	
	One can alternatively consider the stabiliser of the null \emph{ray} $\R^+ \cdot e_{n+1}$ in $\SO(n+1,1)$: this is simply the connected component $\Sim^0(n)$ of $\Sim(n)$, i.e., $\Sim^0(n) = (\R^+\times \SO(n))\ltimes (\R^n)^*$. Its Levi part is the connected component $\CO^0 (n)=\R^+\times \SO(n)$.
	
	Note that the action of $\Sim(n)$ on the screen space $\mbb{H}_\K=\K^\perp/\K$ is given by $(a,A,\xi)\cdot [x]=[Ax]\in \mbb{H}_\K$ where $(a,A,\xi)\in \Sim(n)$  and $[x]\in \mbb{H}_\K$. In particular, $\Sim(n)$, and thus $\Sim^0(n)$, preserve the induced orientation on $\mbb{H}_\K$.
	
	Within $\Sim(n)$ there is the special Euclidean group $\mathbf{Euc}^0(n)=\SO(n)\ltimes (\R^n)^*$, which is the stabiliser of the null vector $\mbf{e}_0$ in its representation on $\R^{n+1,1}$.
	
	An interesting property about  $\SO^0(n+1,1)$ is that it has no proper connected subgroups that act {\em irreducibly} on $\R^{n+1,1}$, i.e., without invariant subspace, see for example \cite{olmos-discala01}. Hence,  proper subgroups of $\SO^0(n+1,1)$ that act indecomposably, i.e., without {\em nondegenerate} invariant subspace, are contained in $\Sim^0(n)$. For future use, we set
	\begin{align*}
		P & := \Sim^0(n) \, , & P_0 & := \CO^0(n) \, .
	\end{align*}

	The Lie algebra
	$\g=\so(n+1,1)$ is $|1|$-graded
	\begin{align*} \g=\g_{-1}\oplus\g_0\oplus\g_1
	\end{align*} 
	with 
	\begin{eqnarray*} \g_0&=&
		\left\{\begin{pmatrix} r&0&0  \\ 0 &Z &0\\ 0&0& -r\end{pmatrix}\mid \begin{array}{l} r \in \R,
			Z\in \so(n)
		\end{array}\right\}=\co(n)=\R\oplus \so(n),
		\\
		\g_{1}&=&\left\{\begin{pmatrix}0& 0 &0  \\ -\xi^\top &0 & 0\\ 0&\xi &0\end{pmatrix}\mid \begin{array}{l}  \xi\in (\R^n)^*\end{array}\right\}
		=(\R^n)^*
		\\
		\g_{-1}&=&\left\{\begin{pmatrix}0& -x^\top &0  \\ 0 &0 & x \\ 0& 0 &0\end{pmatrix}\mid \begin{array}{l}  x\in \R^n\end{array}\right\}
		=\R^n,
	\end{eqnarray*}
	i.e., the Lie bracket in $\so(n+1,1)$ satisfies $[\g_i,\g_j]=\g_{i+j}$ for $i,j=-1,0,1$, with the convention that $\g_i = \{ 0 \}$ for all $|i|>1$.
	
	Now, setting $\g^1 := \g_1$, $\g^0 := \g_0 \oplus \g_1$ and $\g^{-1} :=  \g_{-1} \oplus \g_0 \oplus \g_1$, we obtain a filtration
	\begin{align}\label{eq-sim-filtration-g}
		\{ 0 \} =: \g^2 \subset \g^1 \subset \g^0 \subset \g^{-1} = \g \, .
	\end{align}
	of $P$-modules. The adjoint representation of $P_0$ acts
	\begin{itemize}
		\item on $\g_0=\co(n)$ via the adjoint representation, 
		\item on $\g_{-1}=\R^n$ via the standard representation 
		\item on $\g_{1}=(\R^n)^*$ via the dual representation,
	\end{itemize}
	as can be gleaned from the following computation:
	\begin{align*} 
		\begin{pmatrix}
			a & 0 &0 \\
			0 & A & 0 \\
			0 & 0 & a^{-1}
		\end{pmatrix}
		\begin{pmatrix}
			r & -x^\top & 0 \\
			-\xi^\top & Z & x\\
			0 & \xi & -r
		\end{pmatrix}
		\begin{pmatrix}
			a^{-1} & 0 & 0 \\
			0 & A^\top & 0 \\
			0 & 0 &a
		\end{pmatrix}
		=
		\begin{pmatrix}
			r & -a x^\top A^\top & 0 \\
			-a^{-1} A \xi^\top & A Z A^\top & a A x \\
			0 & a^{-1}  \xi A^\top & -r
		\end{pmatrix} \, , 
	\end{align*} 
	where $a \in \R^*$, $A \in \SO(n)$, $r \in \R$, $Z \in \so(n)$, $\xi \in (\R^n)^*$ and $x \in \R^n$.
	
	The Lie algebra of $P$ is given by
	\begin{eqnarray*}
		\g^0 = \p=\mf{sim}(n)&=&\g_0\ltimes \g_{1}\ =\ \co(n)\ltimes (\R^n)^*\ =\ (\R\oplus\so(n))\ltimes (\R^n)^*
		\\
		&=&\left\{\begin{pmatrix} r& 0 &0  \\ -\xi^\top & Z & 0\\ 0& \xi & -r\end{pmatrix}\mid \begin{array}{l} r \in \R \, ,\\
			Z\in \so(n) \, ,\\
			\xi\in (\R^n)^*\end{array}\right\} \, .
	\end{eqnarray*}
	The abelian part $P_+$ of $P$ is given by exponentiation of $\g_1$. Clearly the quotient $\g/\p$ is isomorphic as a $P_0$-module to $\g_{-1}$, which is {\em not } a $P$-module.

	Similarly, $\V$ splits into a direct sum of $P_0$-modules
	\begin{align}\label{eq-sim-grading}
		\V & = \V_{-1} \oplus \V_0 \oplus \V_{1} \, ,
	\end{align}
	with 
	$\V_{1}=\K$, $\V_{-1}=\mbb{L}$ and $\V_{0}=\left( \K^\perp \cap \mbb{L}^\perp \right)$  and such that
	\begin{align*} \g_i\cdot \V_j\subset \V_{i+j}.\end{align*} 
	The vector space $\V$ admits a filtration of $P$-modules:
	\begin{align}\label{eq-sim-filtration}
		\{ 0 \} =: \V^2 \subset \V^1 \subset \V^0 \subset \V^{-1} := \V \, ,
	\end{align}
	where $\V^1=\V_1=\K$, $\V^0=\V_1\oplus\V_0=\K^\perp$. The \emph{associated graded vector space $\gr(\V)$} of the filtration \eqref{eq-sim-filtration} is given by
	\begin{align*}
		\gr(\V) & := \gr_{-1} (\V) \oplus \gr_0 (\V) \oplus \gr_1 (\V), & &  \mbox{where} & \gr_i (\V) & := \V^i / \V^{i+1} \, .
	\end{align*}
	Each $\mathrm{gr}_i(\V)$ is a $P$-module, which, as a $P_0$-module, is isomorphic to $\V_i$.
	
	The Minkowski inner product isomorphism $\V^* \cong \V$ induces a filtration
	\begin{align}\label{eq-sim-filtration-dual}
		\{ 0 \} =: (\V^*)^2 \subset (\V^*)^1 \subset (\V^*)^0 \subset (\V^*)^{-1} := \V^*, 
	\end{align}
	where $(\V^*)^i=(\V^i)^*$ and a splitting, 
	\begin{align}\label{eq-sim-grading-dual}
		\V^* & = \V^*_{-1} \oplus \V^*_0 \oplus \V^*_{1} \, ,
	\end{align}
	on the dual vector space $\V^*$.

	At this stage, we recall our conventions for index types already introduced at the beginning of this section, and these will be used either \emph{concretely}, as numerical indices, or \emph{abstractly}, that is, we shall view them as markers of membership of some vector space. Thus:
	\begin{itemize}
		\item Upper minuscule Roman letters starting from the beginning of the alphabet, $a, b, c, \ldots$ will denote elements of $\V$, and lower ones elements of $\V^*$, e.g. $v^a, w^b \in \V$ and $\alpha_{a}, \beta_{b} \in \V^*$. This notation is extended to tensor products in the obvious way, with symmetrisation and skew-symmetrisation denoted by round and square brackets respectively, e.g.\ $\sigma_{(a b)} = \frac{1}{2}\left( \sigma_{a b} + \sigma_{b a} \right) \in \bigodot^2 \V^*$ and $\tau_{[a b]} = \frac{1}{2}\left( \tau_{a b} - \tau_{b a} \right) \in \bigwedge^2 \V^*$. In particular, the inner product on $\V$ will be denoted by $g_{a b}$, and its inverse $g^{a b}$, and will be used to lower and raise indices. The tracefree part of a symmetric tensor will be adorned with a ring, e.g.\ $\left(T_{ab} \right)_{\circ}$ (or $T_{(ab)_\circ}$) satisfies $\left(T_{ab} \right)_{\circ} g^{a b} = 0$.
		\item Once a splitting is chosen as in \eqref{eq-sim-grading} and \eqref{eq-sim-grading-dual}, Upper minuscule Roman letters starting from the middle of the alphabet, $i, j, k, \ldots$ will denote elements of $\V_0$, and lower ones elements of $\V^*_0$, e.g. $v^i, w^j, \ldots \in \V_0$ and $\alpha_{i}, \beta_{j} \in \V^*_0$. This notation will be extended to elements of $\K^\perp/\K$ and their dual. The screen space inner product on $\K^\perp / \K$ yields an inner product on $\V_0$ denoted $h_{i j}$, and with inverse $h^{i j}$. Indices can be raised or lowered accordingly. The conventions for symmetrisation, skew-symmetrisation and tracefreen part will be the same as above, e.g.\ $\tau_{[i j]} = \frac{1}{2}\left( \tau_{i j} - \tau_{j i} \right) \in \bigwedge^2 \V^*$.
	\end{itemize}
Index types can be mixed, e.g. $T_{a b}{}^{i}{}_{j} \in \bigotimes^2 \V^* \otimes \V_{0} \otimes \V_{0}^*$, and so on. We can also view $T_{a b}{}^{i}{}_{j}$ as the components of some tensor $T \in \bigotimes^2 \V^* \otimes \V_{0} \otimes \V_{0}^*$.
	
	Once a splitting is fixed, we can choose elements $k^a \in \V_1 = \K$ and $\ell^a \in \V_{-1} = \mathbb{L}$ such that $g_{a b} k^a \ell^b =1$, and introduce a surjective map
	$$\delta_a^i :  \V \rightarrow \V_0 \, ,$$
	so that $k^a \delta_a^i = \ell^a \delta_a^i = 0$. Raising and lowering indices yield a surjective map $\delta_i^a :  \V^* \rightarrow (\V_0)^*$. In particular, the screen space inner product can be expressed as
	$h_{ij} = g_{a b} \delta^a_i \delta^b_j$. We shall also use these dually as injective maps
	\begin{align*}
		\delta_a^i & :  (\V_0)^* \rightarrow \V^* \, , &  \delta_i^a & :  \V_0 \rightarrow \V \, .
	\end{align*}
	Thus, in this notation, any element $v^a$ of $\V$ that satisfies $g_{a b} v^a k^b = g_{a b} v^a \ell^b = 0$ can be expressed uniquely as $v^a = v^i \delta^a_i$ for some $v^i \in \V_0$. We shall refer to the tuple $(\ell^a, \delta^a_i,k^a)$ and its dual $(\kappa_a,\delta_a^i, \lambda_a)$ as \emph{splitting operators}.

	Any change of splitting that keeps $k^a$ fixed is effected by the action of an element \[\phi_a{}^b = \delta_a^b + \phi_i \delta^i_a k^b - k_a \phi^i \delta_i^b - \frac{1}{2} \phi^i \phi_i k_a k^b\] of $P_+$ on $\V$. In particular, the elements $k^a$, $\delta^a_i$ and $\ell^a$ transform as
	\begin{align}\label{eq-basis-change}
		k^a & \mapsto k^a \, , & \delta^a_i & \mapsto \delta^a_i + \phi_i k^a \, , & \ell^a & \mapsto \ell^a - \phi^i \delta^a_i - \frac{1}{2} \phi^i \phi_i k^a \, .
	\end{align}
	
	In order to relate elements of the exterior algebra of $\V^*$ and those of the exterior algebra of $\mbb{H}_\K^*$, we state the following elementary lemma without proof.
	\begin{lem}\label{lem:useful}
		Any choice of vector $k$ in $\K$ establishes a one-to-one correspondence between elements of  $\bigwedge^p \mbb{H}_\K^*$ and elements $\phi$ of $\bigwedge^{p+1} \V^*$ such that $k \hook \phi = 0$ and $\kappa \wedge \phi = 0$, where $\kappa = g(k,\cdot)$. Explicitly,
		\begin{align*} 
			\varphi_{i_1 \ldots i_p} \mapsto \phi_{a_0 \ldots a_p}=(p+1) \kappa_{[a_0} \delta_{a_1}^{i_1} \ldots \delta_{a_p]}^{i_p} \varphi_{i_1 \ldots i_p} \, ,
		\end{align*} 
		for any inclusion $\delta^i_a : \mbb{H}^*_\K \rightarrow \V^*$.
		The inverse of this map is given by
		\begin{align*}
			\phi_{a_0 a_1 \ldots a_p} & \mapsto \varphi_{i_1 \ldots i_p} = \ell^{a_0} \delta^{a_1}_{i_1} \ldots \delta^{a_p}_{i_p} \phi_{a_0 a_1 \ldots a_p} \, ,
		\end{align*}
		for a $(p+1)$-covector  $\phi_{a_0 a_1 \ldots a_p}$  that satisfies $k^{a_0} \phi_{a_0 a_1 \ldots a_p} = \kappa_{[b} \phi_{a_0 a_1 \ldots a_p]}=0$ and
		where $\ell^a$ is any null vector such that $g_{a b} k^a \ell^b = 1$, and $\delta^i_a k^a = \delta^i_a \ell^a = 0$.
		
		In particular, we have an isomorphism of $\mathbf{Euc}^0(n)$-modules $\mbb{H}_\K^* \cong \g/\p$, where $\mathbf{Euc}^0(n)$ stabilises $k$.
	\end{lem}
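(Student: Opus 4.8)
The plan is to prove the two displayed maps are mutually inverse linear isomorphisms, and then to read off the module statement from the $p=1$ case. Everything rests on a short list of contraction identities that follow at once from the defining relations of $k$, $\kappa = g(k,\cdot)$, $\ell$ and $\delta^i_a$:
\[
\begin{array}{lll}
k^a \kappa_a = g_{ab} k^a k^b = 0, & k^a \delta_a^i = 0, & \delta^a_i \kappa_a = 0,\\[1mm]
\ell^a \kappa_a = g_{ab} \ell^a k^b = 1, & \ell^a \delta_a^i = 0, & \delta^a_i \delta_a^j = \delta_i^j,
\end{array}
\]
where the third identity uses $\delta_i \in \V_0 \subset \K^\perp$ together with the fact that $\kappa$ annihilates $\K^\perp$.

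First I would show the forward map lands in the prescribed subspace. Writing $\phi = \kappa \wedge \Theta$ with $\Theta_{a_1 \ldots a_p} = \delta^{i_1}_{a_1} \ldots \delta^{i_p}_{a_p} \varphi_{i_1 \ldots i_p}$, the condition $\kappa \wedge \phi = \kappa \wedge \kappa \wedge \Theta = 0$ is immediate, while contracting $\phi$ with $k$ in any slot annihilates every term of the skew-symmetrisation, since each covariant index is carried either by $\kappa$ or by some $\delta^i$; hence $k \hook \phi = 0$. Next I would contract $\phi$ with $\ell^{a_0} \delta^{a_1}_{j_1} \ldots \delta^{a_p}_{j_p}$: by the identities above the only surviving permutations are those pairing $\ell$ with $\kappa$ and each $\delta_{j}$ with the matching $\delta^{i}$, and counting the $p!$ such permutations against the factor $(p+1)$ and the $1/(p+1)!$ of the skew-symmetrisation returns exactly $\varphi_{j_1 \ldots j_p}$. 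This makes the second map a left inverse of the first; since the constrained space is isomorphic to $\Lambda^p\big(\Ann(k)/\langle \kappa\rangle\big) \cong \Lambda^p \mbb{H}_\K^*$, of dimension $\binom{n}{p}$, a left inverse between spaces of equal finite dimension is a two-sided inverse, so both maps are bijections. Finally, to see the correspondence depends on $k$ alone I would invoke \eqref{eq-basis-change}: under a change of splitting the injection $\delta^i_a$ picks up a multiple of $\kappa_a$, so $\Theta$ changes only by terms carrying a factor of $\kappa$, which vanish upon wedging with $\kappa$; thus $\phi = \kappa \wedge \Theta$ is unchanged.

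For the module statement I would specialise to $p = 1$ and realise the isomorphism through the $\mathbf{Euc}^0(n)$-equivariant contraction map $\g/\p \to \mbb{H}_\K^*$ sending the class of $A \in \g \cong \Lambda^2 \V^*$ to $k \hook A \bmod \langle \kappa \rangle$. This is well defined and injective precisely because $A$ stabilises $\K$ if and only if $Ak \in \K$, i.e.\ $k \hook A \in \langle \kappa \rangle$, and its target $\Ann(k)/\langle \kappa \rangle$ is canonically $\mbb{H}_\K^*$; equivariance is automatic because $\mathbf{Euc}^0(n)$ fixes $k$ and $g$, hence $\kappa$, $\K$ and $\K^\perp$. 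I expect the only real difficulty to be bookkeeping: keeping track of the combinatorial factors so that the normalisation $(p+1)$ makes the composite the identity, and verifying that each correction term in the change-of-splitting step genuinely carries the extra $\kappa$ it needs in order to vanish.
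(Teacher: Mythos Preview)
Your proof is correct. The paper actually states this lemma \emph{without proof}, introducing it as an ``elementary lemma'', so there is no argument in the paper to compare against; your write-up supplies exactly the routine verification the authors chose to omit. The contraction identities are right, the combinatorial count (the factor $(p+1)$ against the $p!$ surviving permutations in the antisymmetrisation) gives the identity on $\Lambda^p\mbb{H}_\K^*$ as you claim, and the dimension argument via $\Ann(k)/\langle\kappa\rangle\cong\mbb{H}_\K^*$ is the natural way to upgrade the one-sided inverse to a bijection. Your check that the map is independent of the splitting is also correct: under \eqref{eq-basis-change} one has $\delta_a^i\mapsto\delta_a^i+h^{ij}z_j\kappa_a$, so every correction term in $\Theta$ carries a factor of $\kappa$ and is killed by the wedge with $\kappa$. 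The module isomorphism $\g/\p\cong\mbb{H}_\K^*$ via $A\mapsto k\hook A\bmod\langle\kappa\rangle$ is well posed since $k^a A_{ab}k^b=0$ by skewness and has kernel exactly $\p$, and $\mathbf{Euc}^0(n)$-equivariance is immediate from the fact that this group fixes both $k$ and $\kappa$.
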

	
	For each $w \in \mbf{R}$, we also introduce the one-dimensional representation $\R(w)$ of $P$ on $\R$ that is given by
	\begin{align}\label{eq-one-dim-rep}
		(a,A,\xi) \cdot r= a^w r \, ,
	\end{align}
	and restricts to a representation of $P_0$ on $\R$. In the future, we shall simply write $(\R^n)^* (w)$ for $(\R^n)^* \otimes \R(w)$ for any integer $w$.
	
	Now, any element $k^a$ of $\V^1$ can be seen as a map from $\V^*$ to $\R$ with kernel $(\V^*)^0$. In particular, we obtain the following isomorphisms
	\begin{align*}
		\gr_{-1}(\V^*) & \cong \R(1) \, , & \V^*_{-1} & \cong \R(1) \, ,
	\end{align*}
	of $P$-modules and $P_0$-modules respectively. Dually, we also obtain the isomorphism
	\begin{align*}
		\V^*_{1} & \cong \R(-1) \, ,
	\end{align*}
	of $P$-modules. More generally, for any non-negative integers $w, w'$, we have an isomorphism
	\begin{align*}
		{\bigotimes}^{w} (\V^*_{-1} )\otimes {\bigotimes}^{w'} (\V^*_1)& \cong \R(w-w') \, ,
	\end{align*}
	of $P_0$-modules.

	Finally, we note that the center $\mf{z}_0$ of $\g_0$ contains a distinguished \emph{grading element}
	\begin{align}\label{eq-grad-elem}
		E & = 
		\begin{pmatrix}
			-1 & 0 & 0 \\
			0 & 0 &  0 \\
			0 & 0 & 1
		\end{pmatrix}
	\end{align}
	with eigenvalues $\pm1$ and $0$ on $\V_{\pm1}$, $\g_{\pm1}$, and $\V_0$, $\g_0$ respectively. If $\mbb{A}$ is a submodule of the $\g_0$-module $\left( \otimes^p \V_1 \right) \otimes \left( \otimes^q \V_0 \right) \otimes \left( \otimes^r \V_{-1} \right)$, then $E$ has eigenvalue $p-r$ on $\mbb{A}$.

	\begin{rem}
		In dimension six, the isomorphism $\so(4) \cong \su^+(2) \oplus \su^-(2)$, where $\su^\pm(2)$ are two distinct copies of the Lie algebra of the special unitary group $\SU(2)$, leads to a further distinction to be made between $\g_0^+$ and $\g_0^-$, the self-dual part and anti-self-dual part of $\g_0$, each isomorphic to a copy of $\su(2)$.
	\end{rem}
	
	\subsection{The space $\mbb{G}$ of algebraic intrinsic torsions}
	We now consider the $P$-module
	\begin{align}\label{eq-intors-alg}
		\mbb{G} := \V^* \otimes (\g/\p) \, .
	\end{align}
	
	\begin{prop}\label{proposition-sim-intors}
		The $P$-module $\mbb{G} := \V^* \otimes \left( \g / \mf{p} \right)$ admits a filtration of $P$-modules
		\begin{align*}
			\mbb{G}^0 \subset \mbb{G}^{-1} \subset \mbb{G}^{-2}=\mbb{G}  \, ,
		\end{align*}
		where $\mbb{G}^i := (\V^*)^{i+1} \otimes \left( \g / \mf{p} \right)$ for $i=0,-1,-2$.
		The associated graded vector space
		\begin{align}\label{eq-ass-G-p}
			\gr(\mbb{G}) & = \gr_0 (\mbb{G}) \oplus \gr_{-1} (\mbb{G}) \oplus \gr_{-2} (\mbb{G}) \, , & & \mbox{where $\gr_i (\mbb{G}) :=  \mbb{G}^i / \mbb{G}^{i+1}$,}
		\end{align}
		decomposes as a direct sum
		\begin{align*}
			\gr_0 (\mbb{G}) & = \gr^0_0 (\mbb{G})=\mbb{G}^0, &
			\gr_{-1} (\mbb{G}) & = \gr^0_{-1} (\mbb{G}) \oplus \gr^1_{-1} (\mbb{G}) \oplus \gr^2_{-1} (\mbb{G}), & 
			\gr_{-2} (\mbb{G}) & = \gr_{-2}^0 (\mbb{G}), 
		\end{align*}
		of $P$-modules $\gr_i^j( \mbb{G})$, which are isomorphic to the $P_0$-modules $\mbb{G}_i^j$ in Table \ref{tab:table-irred-sim-mod-G}.
		\begin{center}
			\begin{table}
				\begin{displaymath}
					{\renewcommand{\arraystretch}{1.5}
						\begin{array}{||c|c|c|c|c||}
							\hline
							\text{$P_0$-module} & \text{Description} & \text{Dimension $n$} & \text{Dimension $n=2m$} & \text{Dimension $n=2m+1$} \\
							\hline
							\mbb{G}_{-2}^0 & (\R^n)^* (2) & n & 2m & 2m+1 \\
							\hline
							\mbb{G}_{-1}^0 &  (\R^n)^* (1)  & 1 & 1 & 1  \\
							\mbb{G}_{-1}^1 & \bigwedge^2 (\R^n)^* (1) & \frac{1}{2}n(n-1) & m(2m-1) & m(2m+1) \\
							\mbb{G}_{-1}^2 & \odot_\circ^2 (\R^n)^* (1) & \frac{1}{2}n(n+1) - 1 & (m+1)(2m-1) & m(2m+3)  \\
							\hline
							\mbb{G}_0^0 & (\R^n)^* & n & 2m & 2m+1  \\
							\hline
					\end{array}}
				\end{displaymath}
				\caption{\label{tab:table-irred-sim-mod-G}Irreducible $P_0$-submodules of $\mbb{G}$ for $n\neq 4$}
			\end{table}
		\end{center}
		These modules are all irreducibles except in the case $n=4$, where $\mbb{G}_{-1}^1$ splits further into two irreducibles $\mbb{G}_{-1}^{1,+}$ and $\mbb{G}_{-1}^{1,-}$ as shown in Table \ref{tab:table-irred-sim-mod-G-dim6}.
		\begin{center}
			\begin{table}
				\begin{displaymath}
					{\renewcommand{\arraystretch}{1.5}
						\begin{array}{||c|c|c||}
							\hline
							\text{$P_0$-module} & \text{Description} & \text{Dimension $n=4$} \\
							\hline
							\mbb{G}_{-2}^0 & (\R^n)^* (2)& 4 \\
							\hline
							\mbb{G}_{-1}^0 & (\R^n)^* (1)  & 1 \\
							\mbb{G}_{-1}^{1,+} & \bigwedge^{2,+} (\R^n)^* (1) & 3 \\
							\mbb{G}_{-1}^{1,-} & \bigwedge^{2,-} (\R^n)^* (1)   & 3 \\
							\mbb{G}_{-1}^2 &  \odot_\circ^2 (\R^n)^* (1)  &9 \\
							\hline
							\mbb{G}_0^0 & (\R^n)^*  & 4  \\
							\hline
					\end{array}}
				\end{displaymath}
				\caption{\label{tab:table-irred-sim-mod-G-dim6}Irreducible $P_0$-submodules of $\mbb{G}$ for $n=4$}
			\end{table}
		\end{center}

	\end{prop}
	
	\begin{proof}
		We first note that the filtration \eqref{eq-sim-filtration-dual} of $P$-modules on $\V^*$ induces a $P$-invariant filtration
		on $\mbb{G}$. Each summand of the associated graded vector space \eqref{eq-ass-G-p} is isomorphic to a completely reducible $P_0$-module. To describe them, we work in the splitting \eqref{eq-sim-grading-dual}, and express $\mbb{G}$ in terms of its corresponding $P_0$-modules, i.e.
		\begin{align*}
			\mbb{G} & \cong ( \V^*_{-1} \oplus \V^*_0 \oplus \V^*_{1} ) \otimes \g_{-1} \, .
		\end{align*}
		Since $\g_{-1} \cong \V^*_{-1} \otimes \V^*_0$, we immediately have the vector spaces isomorphisms
		\begin{align*}
			\gr_{-2} ( \mbb{G}) & \cong \V^*_{-1} \otimes \V^*_{-1} \otimes \V^*_0  \, , & 
			\gr_{-1} ( \mbb{G}) & \cong \otimes^2 \V^*_0 \otimes \V^*_{-1} \, , &
			\gr_0 ( \mbb{G}) & \cong \V^*_0 \, .
		\end{align*}
		Clearly, $\mbb{G}_0^0 := \gr_0 ( \mbb{G})$ and  $\mbb{G}_{-2}^0 := \gr_{-2} ( \mbb{G})$ are irreducibles. On the other hand, it is easy to check that
		\begin{align*}
			\otimes^2 \V^*_0 \otimes \V^*_{-1} = \V^*_{-1} \oplus \left( \bigwedge^2 \V^*_0 \otimes \V^*_{-1} \right) \oplus \left( \odot_\circ^2 \V^*_0 \otimes \V^*_{-1} \right) \, .
		\end{align*}
		This yields the decomposition of 
		$\gr_{-1} ( \mbb{G})$ as the direct sum of the irreducible $P$-modules $\mbb{G}_{-1}^0$, $\mbb{G}_{-1}^1$ and $\mbb{G}_{-1}^2$.
		This completes the proof for $n\neq4$. The case $n=4$ follows from the fact that $\g_0 \cong \bigwedge^2 \V^*_0$ splits into a self-dual part and anti-self-dual part.
	\end{proof}

	Before we proceed, we fix a splitting of $\mbb{G}$ into $P_0$-modules, and introduce, for each $i,j$, a $P_0$-module epimorphism $\Pi_i^j : \V^* \otimes \g \rightarrow \mbb{G}_i^j$ with the properties that $\V^* \otimes \mf{p}$ lies in the kernel of $\Pi_i^j$, and $\Pi_i^j$ descends to a projection from $\mbb{G}$ to $\mbb{G}_i^j$. For convenience, we shall use the co-vector $\kappa_a = g_{a b} k^b$. We streamline notation by setting $(\Gamma \cdot \kappa)_{ab} := -\Gamma_{ab}{}^{c} \kappa_c$ for any element $\Gamma_{abc}$ of $\V^* \otimes \g$, and $\kappa_a \in \Ann(\K^\perp)$. Note that $(\Gamma \cdot \kappa)_{ab} k^b = 0$ , i.e., $(\Gamma \cdot \kappa)_{ab} \in \V^* \otimes \Ann(\K)$. More explicitly, we define the projections
	\begin{align*}
		\Pi_{-1}^0 & : \V^* \otimes \g \rightarrow \mbb{G}_{-1} \, , && \Gamma_{ab}{}^{c} \mapsto \Pi_{-1}(\Gamma)_{i j} :=  (\Gamma \cdot \kappa)_{ab} \delta^a_{i} \delta^b_{j}  \, ,
	\end{align*}
	where $\mbb{G}_{-1} = \mbb{G}_{-1}^0 \oplus \mbb{G}_{-1}^1 \oplus \mbb{G}_{-1}^2$,
	and
	\begin{align}\label{eq-proj-intors-sim}
		\begin{aligned}
			\Pi_{-2}^0 & : \V^* \otimes \g \rightarrow \mbb{G}_{-2}^0 \, , && \Gamma_{ab}{}^{c} \mapsto \Pi_{-2}^0(\Gamma)_i := (\Gamma \cdot \kappa)_{ab} k^a \delta^b_i  \, , \\
			\Pi_{-1}^0 & : \V^* \otimes \g \rightarrow \mbb{G}_{-1}^0 \, , && \Gamma_{ab}{}^{c} \mapsto \Pi_{-1}^0(\Gamma) :=  \Pi_{-1}(\Gamma)_{i j} h^{i j} \, , \\
			\Pi_{-1}^1 & : \V^* \otimes \g \rightarrow \mbb{G}_{-1}^1 \, , && \Gamma_{ab}{}^{c} \mapsto \Pi_{-1}^1 (\Gamma)_{ij} := \Pi_{-1}(\Gamma)_{[i j]}  \, , \\
			\Pi_{-1}^2 & : \V^* \otimes \g \rightarrow \mbb{G}_{-1}^2 \, , && \Gamma_{ab}{}^{c} \mapsto \Pi_{-1}^2 (\Gamma)_{ij} := \Pi_{-1}(\Gamma)_{(i j)_\circ}  \, , \\
			\Pi_0^0 & : \V^* \otimes \g \rightarrow \mbb{G}_0^0 \, , && \Gamma_{ab}{}^{c} \mapsto 
			\Pi_0^0 (\Gamma)_i := (\Gamma \cdot \kappa)_{ab} \ell^a \delta^b_i  \, .
		\end{aligned}
	\end{align}
	In dimension six, we also define
	\begin{align}\label{eq-proj-intors-sim6}
		\Pi_{-1}^{1,\pm} & : \V^* \otimes \g \rightarrow \mbb{G}_{-1}^{1,\pm} \, , && \Gamma_{ab}{}^{c} \mapsto \Pi_{-1}^{1,\pm} (\Gamma)_{ij} := \frac{1}{2} \left( \Pi_{-1}^{1}(\Gamma)_{i j}  \pm \star \Pi_{-1}^{1}(\Gamma)_{i j} \right) \, ,
	\end{align}
	where $\star$ is the Hodge star on $(\V^*)_0$, i.e., $\Pi_{-1}^{1}(\Gamma)_{i j} =  \Pi_{-1}^{1}(\Gamma)_{k \ell} \frac{1}{2} (\varepsilon_K)_{i j}{}^{k \ell}$, 
	
	By construction, for each $i, j$, the kernel of $\Pi_i^j \mod \V^* \otimes \mf{p}$ is precisely isomorphic to the complement $(\mbb{G}_i^j)^c$ of $\mbb{G}_i^j$ in $\mbb{G}$ as a $P_0$-module, i.e.\
	\begin{align}\label{eq-proj}
		\ker \Pi_i^j / (\V^* \otimes \mf{p}) & \cong  (\mbb{G}_i^j )^c \, , & \mbox{i.e.} & & \left( \ker \Pi_i^j /( \V^* \otimes \mf{p} )\right)^c & \cong \mbb{G}_i^j \, .
	\end{align}
	
	Now, once a splitting of $\mbb{G}$ is chosen, any $P$-submodule of $\mbb{G}$ must be a sum of the irreducible $P_0$-submodules $\mbb{G}_i^j$ given in Proposition \ref{proposition-sim-intors}.  Clearly, not every such sum is a $P$-module. This is clear since the abelian part $P_+$ of $P$ sends elements in $\mbb{G}_{i-1}$ to $\mbb{G}_i \supset \mbb{G}_i^j$. To determine which $P_0$-submodules of $\mbb{G}$ are also $P$-submodules, we use the characterisation \eqref{eq-proj}, and compute how a change of splitting \eqref{eq-basis-change} affects the maps $\Pi_i^j$. Such transformations will tell us how the various modules $\mbb{G}_i^j$ are related under the action of $P_+$, and we will be able to determine the $P$-submodule of $\mbb{G}$ accordingly. This will be made clear in the next proposition.
	
	\begin{prop}\label{prop-intors-sim}
		The following 
		\begin{align*}
			\slashed{\mbb{G}}_{-2}^0 & := \{ \Gamma \in \V^* \otimes \g : \Pi_{-2}^0 (\Gamma) = 0 \}  / \V^*\otimes \mf{p} \, , \\
			\slashed{\mbb{G}}_{-1}^i & := \{ \Gamma \in \V^* \otimes \g : \Pi_{-2}^0 (\Gamma) = \Pi_{-1}^i (\Gamma) = 0 \} / \V^*\otimes \mf{p} \, , & & i = 0, 1, 2 \, ,
		\end{align*}
		and, in dimension six only, i.e.\ $n=4$,
		\begin{align*}
			\slashed{\mbb{G}}_{-1}^{1,\pm} & := \{ \Gamma \in \V^* \otimes \g : \Pi_{-2}^0 (\Gamma) = \Pi_{-1}^{1,\pm} (\Gamma) = 0 \} / \V^*\otimes \mf{p} \, ,
		\end{align*}
		are $P$-submodules of $\mbb{G}$, where each $\Pi_i^j$ is defined by \eqref{eq-proj-intors-sim} and $\Pi_{-1}^{1,\pm} $ by \eqref{eq-proj-intors-sim6}. In addition,
		\begin{align*}
			\slashed{\mbb{G}}_0^0 & := \{ \Gamma \in \V^* \otimes \g : \Pi_{-2}^2 (\Gamma) = \Pi_{-1}^i (\Gamma) = \Pi_0^0 (\Gamma) = 0 \, , i = 0, 1, 2 \} / \V^*\otimes \mf{p} \cong \{ 0 \} \, .
		\end{align*}
		Any $P$-submodule of $\mbb{G}$ arises as an intersection of any of $\mbb{G}$, $\slashed{\mbb{G}}_{-2}^0$, $\slashed{\mbb{G}}_{-1}^i$ for $i=1,2,3$, and in dimension six only, of $\slashed{\mbb{G}}_{1}^{1,\pm}$.
	\end{prop}
	
	\begin{proof}
		Let $\Gamma \in \V^* \otimes \g$, and set
		\begin{align}\label{eq-im-proj}
			\gamma_i & := \Pi_{-2}^0(\Gamma)_i \, , &
			\epsilon & := \Pi_{-1}^0(\Gamma) \, , &
			\tau_{ij} & := \Pi_{-1}^1(\Gamma)_{ij} \, , &
			\sigma_{ij} & := \Pi_{-1}^2(\Gamma)_{ij} \, ,  &
			E_i & := \Pi_0^0(\Gamma) \, .
		\end{align}
		Under the transformation \eqref{eq-basis-change}, the elements \eqref{eq-im-proj} transform as
		\begin{align*}
			\gamma_i & \mapsto \gamma_i \, , \\
			\epsilon & \mapsto \epsilon +  \gamma_i \phi^i \, , \\
			\tau_{ij} & \mapsto \tau_{ij} - \gamma_{[i} \phi_{j]} \, , \\
			\sigma_{ij} & \mapsto \sigma_{ij} + \gamma_{(i} \phi_{j)_\circ}  \, , \\
			E_i & \mapsto E_i -  \sigma_{i j}  \phi^j + \tau_{i j}  \phi^j - \frac{1}{n} \epsilon \phi_i  - \frac{1}{2} \phi^k \phi_k \gamma_i \, .
		\end{align*}
		By inspection, we immediately conclude that $\gr_{-2}^0 (\mbb{G})^c$ and $\gr_{-1}^i (\mbb{G})^c$, $i=0,1,2$ are $P$-modules.
		
		Now, suppose that $\mbb{A}$ is a proper $P$-submodule of $\mbb{G}$, i.e., $\mbb{A}$ is neither trivial or $\mbb{G}$. Then in a splitting, $\mbb{A}$ can be described as the intersection of the kernels of some of the projections $\Pi_i^j$. In particular, $\mbb{A}$ is a vector subspace of $\ker \Pi_i^j$ for some $i, j$. But since $\mbb{A}$ is a $P$-submodule, it must also be contained in the kernel of any other projections describing $\gr_{-2}^0 (\mbb{G})^c$ or $\gr_{-1}^i (\mbb{G})^c$, $i=0,1,2$. Hence, $\mbb{A}$ must be a $P$-submodule of any of $\gr_{-2}^0 (\mbb{G})^c$ and $\gr_{-1}^i (\mbb{G})^c$, $i=0,1,2$.
		
		The six-dimensional case, i.e.\ $n=4$, can be refined further by taking into account the splitting of $\bigwedge^2(\V^*)_0$ into its self-dual part and anti-self-dual part.
	\end{proof}

	%
	%

	\section{Optical geometry}\label{sec:geometry}
	Let $(\mc{M},g)$ be an oriented and time-oriented Lorentzian manifold of dimension $n+2$, and the metric $g$ will be assumed to have signature $(n+1,1)$, i.e., $g$ has only one negative eigenvalue. The volume form associated to $g$ will be denoted by $\varepsilon$, and the Levi-Civita connection $\nabla$. The Lie derivative along a vector field $v$ will be denoted $\mathsterling_v$.
	In addition, the divergence of a vector field $w$ will be denoted $\div \, w$.
	If $E \subset T \mc{M}$ is a vector distribution, we shall denote its annihilator by $\Ann(E) \subset T^* \mc{M}$. For two $1$-forms $\kappa$ and $\lambda$ we denote by $\kappa\,\lambda$ their symmetric product $\kappa \odot \lambda = \frac{1}{2} \left( \kappa \otimes \lambda +\lambda \otimes \kappa \right)$, or in abstract index notation, $\kappa_{(a} \lambda_{b)} = \frac{1}{2} \left( \kappa_{a} \lambda_{b} + \kappa_{b} \lambda_{a} \right)$.
	
	\subsection{Optical structures}
	From remarks in Section \ref{algsec}  it follows that the possible indecomposable
	connected holonomy groups are either equal to $\SO^0(n+1,1)$ or contained in
	the maximal parabolic $\Sim^0(n)$ in $\SO^0(n+1,1)$. This motivates the following definition:
	\begin{defn}\label{optdef}
		Let $(\mc{M},g)$ be an oriented and time-oriented Lorentzian manifold of dimension $n+2$. An \emph{optical structure} on $(\mc{M},g)$ is given by a vector distribution $K\subset T\mc{M}$ of tangent null lines. We refer to $(\mc{M},g,K)$ as an \emph{optical geometry}.
	\end{defn}
	We note that the assumption that $(\mc{M},g)$ is time-oriented implies that $K$ is also oriented (see for example \cite{Baum2014}).

	Denoting by $K^\perp$ the orthogonal space of $K$ with respect to $g$, we have a filtration
	\begin{align}\label{eq-K-filt}
		K \subset K^\perp \subset T \mc{M} \, ,
	\end{align}
	and short exact sequences
	\begin{align}
		& 0 \longrightarrow K \longrightarrow  K^\perp \longrightarrow  H_K \longrightarrow  0 \, , \label{eq-seseq-K} \\
		& 0 \longrightarrow \Ann(K^\perp) \longrightarrow  \Ann(K) \longrightarrow  H^*_K \longrightarrow  0 \, , \label{eq-seseq-AnnK} \\
		& 0 \longrightarrow H_K \longrightarrow  T \mc{M} / K \longrightarrow  T \mc{M} / K^\perp \longrightarrow  0 \, , \label{eq-seseq-TM}
	\end{align}
	of vector bundles on $\mc{M}$. Equation \eqref{eq-seseq-K} defines the \emph{screen bundle} as the rank-$n$ vector bundle $H_K = K^\perp/ K$, and equation \eqref{eq-seseq-AnnK} with its dual $H_K^* = \Ann(K)/\Ann(K^\perp)$. Since $K$ is null, $\Ann(K) \cong K^\perp$ and $\Ann(K^\perp) \cong K$. The isomorphism $H_K \cong H_K^*$ is established by means of the positive definite bundle metric $h$ induced from $g$. It is given by
	\begin{align}\label{eq-metric-H}
		h (v + K , w + K ) & := g(v,w)  \, , & \mbox{for any $v, w \in \Gamma(K^\perp)$.}
	\end{align}
	Note, that $H_K$ also inherits a volume element $\varepsilon_K$ associated to $h$ given by
	\begin{align}\label{eq-K-vol}
		\varepsilon_K ( v_1 + K \, , \ldots , v_n + K ) \kappa & = \varepsilon ( k, v_1 \, , \ldots \, , v_n \, , \cdot ) \, , & \mbox{for any $v_1 \, , \ldots \,  , v_n \in \Gamma(K^\perp)$,}
	\end{align}
	for any $k \in \Gamma(K)$ and where $\kappa = g(k, \cdot)$. This is clearly independent of the choice of section of $K$.
	
	\begin{rem}\label{term-remark}
		The notion of an `optical structure' has been used in the literature for several different structures related to, but  not necessarily coinciding with, our Definition~\ref{optdef}.
		For example, in \cite[Section 1.4]{nurowski93-phd} and \cite{nurowski96} Nurowski defined an (almost) optical structure on a Lorentzian manifold of even dimension as a vector distribution of null lines together with a bundle complex structure $J$ on the screen bundle $H_K$ that is compatible with the metric $h$. An optical structure was then defined as an  almost optical structure satisfying certain integrability conditions. For these (almost) optical structures later in  \cite{Nurowski2002} Nurowski and Trautman introduced the name {\em (almost) Robinson structures}.  We will  use the notion of an (almost) Robinson structure in the same sense in the forthcoming \cite{Fino2023} (see Remark \ref{rob-remark}) and  use the weaker notion of optical structure as in Definition \ref{optdef} to denote the existence of a null line distribution $K$.
		
		On four-dimensional Lorentzian manifolds the orientability of the screen bundle, which in this case is of rank two, is sufficient for the existence of a compatible complex structure on the rank-two screen bundle, so here our notion of optical structure coincides with Nurowski's and with the notion of  an almost Robinson structure.
		
		Another notion of an optical structure was introduced by Robinson and Trautman in \cite{Robinson1985,Trautman1985, Robinson1986,Robinson1989} as a flag geometry on a smooth manifold given by a filtration of the tangent bundle by a line subbundle and a co-rank 1 subbundle together with a certain equivalence class of Lorentzian metrics for which the line bundle is null (see \cite[Definition 1]{Robinson1989}). In Definition \ref{def:gen_opt_str} we will call such structures  {\em generalised optical structures}.
		
		Finally, the third author of the present article also used the terminology `optical structure' on a \emph{five}-dimensional Lorentzian manifold in reference \cite{Taghavi-Chabert2011} to describe a null line distribution that arises from a totally null complex distribution of rank two. This may be described more accurately as a five-dimensional analogue of an almost Robinson structure.
	\end{rem}

	For a given optical structure $(\mc{M},g,K)$ a  choice of null line distribution $L$ dual to $K$ splits the filtration \eqref{eq-K-filt} as a direct sum of vector bundles
	\begin{align}\label{eq-K-split}
		T \mc{M} & = L \oplus \left( K^\perp \cap L^\perp \right) \oplus K \, .
	\end{align}

	We shall use the convention already introduced in Section \ref{sec:algebra}: upper lower-case Roman indices from the beginning of the alphabet $a, b, \ldots$ will refer to sections of $T\mc{M}$, while those starting from the middle of the alphabet $i,j,k,\ldots$ will refer to sections of either $K^\perp/K$ or $K^\perp \cap L^\perp$ once a complement $L$ to $K^\perp$ is chosen. Lower versions of these index types will refer to dual sections.
	
	Once such a splitting is chosen, we fix sections $k^a$ of $K$ and $\ell^a$ of $L$ such that $g_{ab} k^a \ell^b = 1$, together with injective maps $\delta^a_i : \Gamma( K^\perp \cap L^\perp )\rightarrow \Gamma( T \mc{M} )$. Viewing $\delta^a_i$ dually as projections from $T^* \mc{M}$ to $(K^\perp \cap L^\perp)^*$, the screen bundle metric $h_{i j}$ is given by $h_{i j} = g_{a b} \delta^a_i \delta^b_j$. Setting $\kappa_a = g_{a b} k^b$, $\lambda_a = g_{a b} \ell^b$ and $\delta^i_a = \delta_j^b g_{a b} h^{j i}$, where $h^{i j}$ is the inverse screen bundle metric, the metric $g$ takes the form
	\begin{align*}
		g_{a b} & = 2 \kappa_{(a} \lambda_{b)} + h_{i j} \delta^i_a \delta^j_b \, ,
	\end{align*}
	We note that the following transformations
	\begin{align}
		k^a & \mapsto \wt{k}^a = \e^{\varphi} k^a \, , & \delta^a_i & \mapsto \wt{\delta}^a_i = \delta^a_i \, , & \ell^a & \mapsto \wt{\ell}^a = \e^{-\varphi} \ell^a \, ,  \label{eq-boost} \\
		k^a & \mapsto \wt{k}^a = k^a \, , & \delta^a_i & \mapsto \wt{\delta}^a_i = \delta^a_i + \phi_i k^a \, , & \ell^a & \mapsto \wt{\ell}^a = \ell^a - \phi^i \delta^a_i - \frac{1}{2} \phi^i \phi^j h_{i j} k^a \, , \label{eq-null_rotation}
	\end{align}
	where $\varphi \in C^\infty(\mc{M})$ and $\phi_i \in C^\infty(\mc{M}\times \R^n)$, leave the form of the metric unchanged. Transformation \eqref{eq-boost} is referred to as a \emph{boost}, and \eqref{eq-null_rotation} as a \emph{null rotation}.
	
	\begin{rem}
		In the treatment of null frames in general  relativity it is customary to introduce a third type of transformations referred to as \emph{spin rotations} which are simply the $\SO(n)$ transformations acting on the orthogonal basis of $K^\perp \cap L^\perp$. In this article, we shall do away with such transformations since we will be using $\delta^a_i$ as abstract injections from $K^\perp \cap L^\perp$ to $T \mc{M}$.
	\end{rem}

	\subsubsection{Optical structures as $G$-structures}
	In accordance with the definition of an $H$-structure given in Section \ref{sec:G-structures}, an optical structure defines a $\Sim^0(n)$-structure, i.e.,  a reduction of the frame bundle to the $\Sim^0(n)$-bundle $\mc F^P$, with $P=\Sim^0(n)$, whose fiber over any point $p \in \mc{M}$ is given by
	\begin{align*}
		\mc F^{P}_p=\{(e_0,\ldots , e_{n+1})|_p \in \mc F_p \mid e_{n+1}|_p\in K_p\} \, .
	\end{align*}
	Note that if we drop the assumption that $\mc{M}$ is time-oriented, $K$ is then not necessarily oriented, and one merely has a $\Sim(n)$-structure.
	
	If $\mbb{A}$ is a $P$-module, the corresponding associated vector bundle is given by
	\begin{align*}
		\mc{F}^P (\mbb{A}) & := \mc{F}^P \times_P \mbb{A} \, .
	\end{align*}
	More specifically, we have the following line bundles:
	\begin{defn}
		For each $w \in \R$, we defined the \emph{bundle of boost densities of weight $w$} to be the line bundle
		\begin{align*} \mc E(w) =\mc F^{P}( \R(w)) ,\end{align*} 
		where $\R(w)$ is the one-dimensional representation of $P$ on $\R$ given by \eqref{eq-one-dim-rep}.  We shall say that a section $s$ of $\mc{E}(w)$ has \emph{boost weight} $w$.
	\end{defn}

	A section $s$ of $\mc{E}(w)$ satisfies the following property: two distinct sections $k^a$ and $\wt{k}^a$ of $K$ with $\wt{k}^a = a k^a$ for some non-vanishing function $a$ induce two distinct trivialisations of $\mc{E}(w)$ with respect to which $s = f$ and $s = \wt{f}$, with $\wt{f} = a^{-w} f$.
	
	Being an associated vector bundle to the frame bundle, the Levi-Civita connection extends to a connection on $\mc{E}(w)$, which we shall also denote $\nabla$.
	
	We note that for any vector bundle $F$, it will be convenient to adopt the short-form notation $F(w)$ for $F \otimes \mc{E}(w)$.
	
	It is also straightforward to make the following identification of $P$-invariant vector bundles
	\begin{align}\label{eq-K-density}
		K & \cong \mc{E}(-1) \, .
	\end{align}
	
	\subsection{Intrinsic torsion}
	With reference to equation \eqref{eq-intors-gen} and Section \ref{sec:algebra}, let us set $\mc{G} = \mc{F}^P(\mbb{G})$ where $\mbb{G}$ given by \eqref{eq-intors-alg}. Then the intrinsic torsion $\mathring{T}$
	of a $\Sim^0(n)$-structure is given  by  a section of $\mc{G} \cong T^* \mc{M}\otimes \mc F^{P}(\g/\p)$ at each point $p \in M$ defined by
	\begin{align*} 
		\mathring{T}(v)|_p &= 
		\left[ (\ell,e_1,\ldots, e_n,k) \, ,
		\pi
		\begin{pmatrix} * & -g(\nabla^g_v k, e_i) & 0 \\
			* & * & g(\nabla^g_v k,e_i)\\
			0 & * & *
		\end{pmatrix}
		\right] |_p\, ,
	\end{align*} 
	for any $v \in \Gamma(T \mc{M})$ and any  local section $(k,e_1,\ldots, e_n, \ell )$ of $\mc F^{P}$.
	Note that we have $\nabla_v k \in \Gamma(K^\perp)$. We have the following isomorphism of vector bundles
	\begin{eqnarray*}
		\mc F^{P}(\g/\p) &\simeq & H_K(1),
		\\
		\left[ (\ell, e_1, \ldots , e_n , k), \pi\begin{pmatrix} *&(-x^i)&0\\ * & * & (x^i) \\0&*&*
		\end{pmatrix}\right]
		& \mapsto & 1 \otimes [x^i e_i].
	\end{eqnarray*}
	This implies that with a choice of trivialisation $k \in \Gamma(K)$, the intrinsic torsion of a $\Sim^0(n)$-structure can be identified with 
	\begin{align*}
		\mathring{T}_{a j} = (\nabla_a \kappa_b) \delta^b_j \in \Gamma(T^* \mc{M} \otimes H_K^* )\, , &&  \mbox{where} & & \kappa_{a} = g_{a b} k^{b} \, .
	\end{align*} 

	We shall be concerned with characterising the intrinsic torsion according to the algebraic analysis of Section \ref{sec:algebra}. For this purpose, let us define the associated vector bundles
	\begin{align*}
		\mc{G}^i := \mc{F}^P \times_P \mbb{G}^i \, , 
	\end{align*}
	so that there is a filtration of bundles
	\begin{align*}
		\mc{G} =: \mc{G}^{-2} \supset\mc{G}^{-1} \supset \mc{G}^0 \, ,
	\end{align*}
	with associated graded vector bundle
	\begin{align*}
		\gr(\mc{G}) = \gr_{-2}^0 (\mc{G}) \oplus \left( \gr_{-1}^0 (\mc{G}) \oplus \gr_{-1}^1 (\mc{G}) \oplus \gr_{-1}^2 (\mc{G}) \right) \oplus \gr_0^0 (\mc{G}) \, .
	\end{align*}
	Once a splitting is chosen, we also obtain the $P_0$-invariant vector bundles $\mc{G}_i^j := \mc{F}^P \times_{P_0} \mbb{G}_i^j$, where we recall $P_0 = \CO^0(n)$. These can be identified as follows:
	\begin{align*}
		\mc{G}_{-2}^0 & \cong H_K^*(2) \, , \\
		\mc{G}_{-1}^0 & \cong \mc{E}(1) \, , &
		\mc{G}_{-1}^1 & \cong \bigwedge^2 H_K^*(1) \, , &
		\mc{G}_{-1}^2 & \cong \odot^2_\circ H_K^*(1) \, ,  \\
		\mc{G}_{0}^0 & \cong H_K^*\, .
	\end{align*}
	We extend the projections $\Pi_i^j$ defined by \eqref{eq-proj-intors-sim} to bundle projections in the obvious way: let $k \in \Gamma(K)$, set $\kappa = g(k,\cdot)$, choose $\ell$ such that $g(k,\ell)=1$ and denote by $\delta^a_i$ the projections onto the complements of the spans of $k$ and $\ell$. Then we may choose a connection $1$-form $\Gamma_{ab} \,{}^c$ such that $\nabla_a \kappa_b = - \Gamma_{ab}\,{}^c \kappa_c$, we then have
	\begin{align*}
		(\nabla_a \kappa_b) k^a \delta^b_i & = \Pi_{-2}^0(\Gamma)_i \, , \\ 
		(\nabla_a \kappa_b) \delta^a_i \delta^b_j h^{i j} & = \Pi_{-1}^0(\Gamma) \, , & 
		(\nabla_a \kappa_b) \delta^a_{[i} \delta^b_{j]} & = \Pi_{-1}^1(\Gamma)_{ij} \, ,& 
		(\nabla_a \kappa_b) \delta^a_{(i} \delta^b_{j)} & = \Pi_{-1}^2(\Gamma)_{ij} \, ,\\ 
		(\nabla_a \kappa_b) \ell^a \delta^b_i & = \Pi_0^0(\Gamma).
	\end{align*}
	Armed with these, we can characterise sections of the $P$-invariant vector bundles $\slashed{\mc{G}}_i^j := \mc{F}^P \times_P \slashed{\mbb{G}}_i^j$ using the algebraic decomposition of $\mbb{G}$ given in Proposition \ref{prop-intors-sim}. For instance, the intrinsic torsion $\mathring{T}$ is a section of $\slashed{\mc{G}}_{-1}^1$ if and only if $(\nabla_a \kappa_b) k^a \delta^b_i = (\nabla_a \kappa_b) \delta^a_{[i} \delta^b_{j]} = 0$. In the next sections, we shall give alternative characterisations of the intrinsic torsion.

	\begin{rem}
		In principle, the intrinsic torsion of a given optical structure may change classes from point to point. For simplicity, we shall assume that it does not.
	\end{rem}
	
	\subsection{Congruence of null curves}
	Under the assumption that $\mc{M}$ is time-oriented (so that $K$ is oriented), the line distribution gives rise to a \emph{congruence of (oriented) null curves}, that is, a foliation of $\mc{M}$ by unparametrised oriented curves tangent to $K$. Conversely, any optical geometry arises in this way. We shall typically denote such a congruence $\mc{K}$. Any vector field tangent to the curves of $\mc{K}$ will be referred to as a \emph{generator} of $\mc{K}$.

	Each local section $k$ of $K$ induces a parametrisation $t$ along each curve of the congruence, i.e., $k=\frac{\partial}{\partial t}$. Any other section of $K$ must differ from $k$ by some positive smooth factor of $K$, and gives rise to the same congruence of oriented null curves, but with a different parametrisation.

	Given a congruence of null curves $\mc{K}$, it will be particularly convenient to consider the local leaf space $\ul{\mc{M}}$ of $\mc{K}$ in an open subset $\mc{U}$ of $\mc{M}$, i.e., a surjective submersion $\varpi$ from $\mc{U}$ to an $(n+1)$-dimensional smooth manifold $\ul{\mc{M}}$: for each $x \in \ul{\mc{M}}$, the inverse image $\varpi^{-1} (x)$ is a null curve in $\mc{U}$ tangent to $K$. This leaf space will feature prominently in the study of optical structures as we shall see later. Again the considerations here will be essentially local.

	If any geometric structure on $\mc{M}$ is preserved along the flow of some generator of $\mc{K}$, then it must descend to the leaf space. In general, the leaf space of a congruence of null curves will have no structure, not even an orientation.
	
	\subsubsection{Congruence of null geodesics}
	We now turn to the case where the intrinsic torsion of an optical geometry $(\mc{M},g, K)$ satisfies the weakest possible non-trivial condition.
	
	Recall that the null curves of a congruence $\mc{K}$ tangent to $K$ are geodesics if
	\begin{align}\label{eq-null-geod}
		\nabla_k k & = f k \, , &  \mbox{for some $k \in \Gamma(K)$ and $f \in C^\infty(\mc{M})$.}
	\end{align}
	This condition clearly does not depend on the choice of generator of $\mc{K}$. Note that one can always find local generators of $\mc{K}$ that satisfy
	\begin{align}\label{eq-aff-geod}
		\nabla_k k = 0 \, .
	\end{align}
	The integral curves of such a $k$ are \emph{affinely parametrised} geodesics:
	if $t \in \R$ is an affine parameter along a null geodesic, any affine reparametrisation $t \mapsto a t + b$ for some smooth functions $a$ and $b$ constant along $k$, where $a$ is non-vanishing, induces the transformation \eqref{eq-boost},  where $\wt{k}$ also generates  affinely parametrised geodesics.

	A more convenient characterisation of the congruence of null geodesics is given in the following lemma:
	\begin{lem}[\cite{Robinson1983}] \label{lem-proposition-geod}
		Let $(\mc{M},g,K)$ be an optical structure with congruence of null curves $\mc{K}$. The following statements are equivalent:
		\begin{enumerate}
			\item the curves of $\mc{K}$ are geodesics;\label{item:K-geod}
			\item the distribution $K^\perp$ is preserved along the flow of any generator $k$ of $\mc{K}$, i.e., $\mathsterling_k v \in \Gamma(K^\perp)$ for any $v \in \Gamma(K^\perp)$, or equivalently, $\mathsterling_k \kappa \in \Gamma(\Ann(K^\perp))$,  where $\kappa = g(k,\cdot)$.\label{item:kap-geod}
		\end{enumerate}
		Further, if any of \eqref{item:K-geod} and \eqref{item:kap-geod} holds, and the geodesics of $\mc{K}$ are affinely parametrised by $k$, then
		\begin{align*}
			\mathsterling_k \kappa = 0 \, .
		\end{align*}
		In particular, the $1$-form $\kappa$ descends to a $1$-form on the leaf space $\ul{\mc{M}}$ of $\mc{K}$ where it annihilates a rank-$n$ distribution $\ul{H}$, i.e., locally $\kappa$ is the pull back of a $1$-form $\ul{\kappa}$ annihilating $\ul{H}$.
	\end{lem}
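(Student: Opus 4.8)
The plan is to establish the equivalence of \eqref{item:K-geod} and \eqref{item:kap-geod} by a direct computation relating the geodesic condition to the behaviour of $K^\perp$ under the flow of $k$, and then to deduce the descent statement from the affinely parametrised case. First I would unpack the two formulations of \eqref{item:kap-geod} and note that they are indeed equivalent: for $v \in \Gamma(K^\perp)$ we have $g(k,v)=0$, and differentiating along $k$ gives $0 = \mathsterling_k\big(g(k,v)\big) = g(\nabla_k k, v) + g(k, \mathsterling_k v)$, using that $\nabla g = 0$ and that $\mathsterling_k v = \nabla_k v - \nabla_v k$; the point is that $\mathsterling_k v \in \Gamma(K^\perp)$ for all $v \in \Gamma(K^\perp)$ is the same as saying $\kappa(\mathsterling_k v) = 0$, i.e.\ $(\mathsterling_k\kappa)(v)=0$ for all such $v$, which says precisely $\mathsterling_k\kappa \in \Gamma(\Ann(K^\perp))$. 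So the dual formulation is just the annihilator version.

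Next I would prove $\eqref{item:K-geod} \Leftrightarrow \eqref{item:kap-geod}$ by computing $\mathsterling_k\kappa$ directly. Since $\kappa = g(k,\cdot)$ and $\nabla$ is torsion-free and metric, the Cartan formula gives $\mathsterling_k\kappa = \d(\iota_k\kappa) + \iota_k\,\d\kappa$, and a standard identity yields $(\mathsterling_k\kappa)(v) = g(\nabla_v k, k) + g(\nabla_k k, v) = \tfrac12 v\big(g(k,k)\big) + g(\nabla_k k, v)$ for any vector field $v$. As $K$ is null, $g(k,k)=0$ identically, so the first term vanishes and
\begin{align*}
(\mathsterling_k\kappa)(v) = g(\nabla_k k, v) \, .
\end{align*}
From this the equivalence is immediate: if \eqref{eq-null-geod} holds, $\nabla_k k = f k$, so $(\mathsterling_k\kappa)(v) = f\,g(k,v) = f\,\kappa(v)$, whence $\mathsterling_k\kappa = f\kappa \in \Gamma(\Ann(K^\perp))$ since $\kappa$ annihilates $K^\perp$; conversely, if $\mathsterling_k\kappa \in \Gamma(\Ann(K^\perp))$ then $g(\nabla_k k, v)=0$ for all $v \in \Gamma(K^\perp)$, so $\nabla_k k \in \Gamma((K^\perp)^\perp) = \Gamma(K)$, giving \eqref{eq-null-geod}. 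The affine case is then a triviality: taking $k$ with $\nabla_k k = 0$ forces $(\mathsterling_k\kappa)(v) = g(\nabla_k k,v)=0$ for \emph{all} $v$, hence $\mathsterling_k\kappa = 0$.

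Finally, for the descent statement I would invoke the fact that a tensor preserved along the flow of a generator descends to the leaf space, as already recorded in the discussion of $\ul{\mc{M}}$ preceding the lemma. Since $\mathsterling_k\kappa = 0$ and $\iota_k\kappa = g(k,k)=0$, the $1$-form $\kappa$ is both invariant and horizontal with respect to the submersion $\varpi$, so it is the pullback of a well-defined $1$-form $\ul{\kappa}$ on $\ul{\mc{M}}$; the rank-$n$ distribution $\ul{H} := \ker\ul{\kappa}$ is then what $\kappa$ annihilates downstairs, matching the screen bundle data. The main obstacle, such as it is, is keeping careful track of the two equivalent phrasings of \eqref{item:kap-geod} and verifying that the Lie-derivative identity $(\mathsterling_k\kappa)(v)=g(\nabla_k k,v)$ uses nullity of $k$ in exactly the right place; once that single identity is in hand, both the equivalence and the descent follow with no further difficulty.
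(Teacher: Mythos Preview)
Your proposal is correct and follows essentially the same approach as the paper: both reduce the question to the identity $(\mathsterling_k\kappa)(v)=g(\nabla_k k,v)$, obtained from a direct Lie-derivative computation using the nullity of $k$, and then read off the equivalence and the affine case. Your write-up is simply more detailed---you spell out the equivalence of the two phrasings in \eqref{item:kap-geod} and the horizontal-plus-invariant argument for descent, which the paper leaves implicit.
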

	
	\begin{proof}
		This is a straightforward computation: for any vector field $v \in \Gamma(K^\perp)$, we have $\mathsterling_k \kappa (v) = \nabla_k \kappa (v) + \kappa(\nabla_v k) = g(\nabla_k k,v) = f \kappa (v)$ by \eqref{eq-null-geod}.
	\end{proof}
	\vspace{2.5mm}

	The relation to the intrinsic torsion now follows immediately:
	\begin{prop}
		Let $(\mc{M}, g, K)$ be an optical geometry with congruence of null curves $\mc{K}$. Let $\mathring{T} \in \Gamma ( \mc{G} )$ be the intrinsic torsion of the optical structure. Then the curves of $\mc{K}$ are geodesics if and only if $\mathring{T} \in \Gamma( \mc{G}^{-1}  ) = \Gamma( \slashed{\mc{G}}_{-2}^0  )$.
	\end{prop}
	
	In other words, the obstruction to the curves of $\mc{K}$ being geodesics is given by a weighted section of $H_K^*(2)$, which we shall denote $\breve{\gamma}_i$.
	
	Thus, for a congruence of null geodesics, the short exact sequences \eqref{eq-seseq-AnnK} and \eqref{eq-seseq-TM} on $\mc{M}$ give rise to short exact sequences on $\ul{\mc{M}}$:
	\begin{align}
		& 0 \longrightarrow \Ann(\ul{H}) \longrightarrow  T^* \ul{\mc{M}} \longrightarrow  \Ann(T \ul{\mc{M}} / \ul{H}) \longrightarrow  0 \, , \label{eq-seseq-AnnKudl} \\
		& 0 \longrightarrow \ul{H}  \longrightarrow  T \ul{\mc{M}} \longrightarrow  T \ul{\mc{M}} / \ul{H} \longrightarrow  0 \, . \label{eq-seseq-Kudl}
	\end{align}
	In general, one can identify sections of $\Ann(\ul{H})$ with those of $\Ann(H_K)$: let $k$ be a generator of $\mc{K}$ that defines an affine parameter for the geodesics, then $\kappa = g(k, \cdot)$ descends to a section of $\Ann(\ul{H})$. Conversely, if $\kappa$ is the pull-back of some section $\ul{\kappa}$ of $\Ann(\ul{H})$, then it is covariantly constant along $k^a = g^{a b} \kappa_b$, and thus $k^a$ defines an affine parameter for the geodesics.

	\subsubsection{Optical invariants}
	It is well-known, see e.g.\ \cite{Robinson1983,Penrose1986,Stephani2003,Pravda2004} and references therein, that a congruence of null geodesics has a number of invariants. We presently relate them to the intrinsic torsion of the optical structure.
	
	\begin{defn}\label{definition-tw-sh-exp}
		Let $(\mc{M},g,K)$ be an optical geometry with associated congruence of null geodesics $\mc{K}$. Let $k$ be a generator of $\mc{K}$ and $\kappa = g(k,\cdot)$. The sections $\epsilon \in C^\infty (\mc{M})$, $\tau \in \Gamma(\bigwedge^2 H^*_K)$ and $\sigma \in \Gamma( \odot^2_\circ H^*_K )$ given respectively by
		\begin{subequations}\label{eq-ETS}
			\begin{align}
				\epsilon \, \kappa & =   \kappa \, \div  k -  \nabla_{k} \kappa   \, , \label{eq-exp} \\
				\tau (v + K , w + K) & = \d \kappa (v,w) \, , & \mbox{for any $v,w \in \Gamma(K^\perp)$,} \label{eq-twist} \\
				\sigma (v +K , w+K) & = \frac{1}{2} \mathsterling_{k} g (v , w ) - \frac{1}{n} \epsilon \, g (v , w ) \, , & \mbox{for any $v, w \in \Gamma(K^\perp)$,} \label{eq-shear} 
			\end{align}
		\end{subequations}
		are referred to as the \emph{expansion}, the \emph{twist} and the \emph{shear} of $k$ respectively, and collectively, as the \emph{optical invariants} of $k$.

		  We define the \emph{expansion}, the \emph{twist} and the \emph{shear of $\mc{K}$} as the weighted sections $\breve{\epsilon} \in \Gamma(\mc{E}(1))$, $\breve{\tau} \in \Gamma(\bigwedge^2 H^*_K (1))$ and $\breve{\sigma} \in \Gamma( \odot^2_\circ H^*_K (1))$ respectively, whose trivialisations by a choice of generator of $\mc{K}$ are given by $\epsilon$, $\tau$ and $\sigma$ respectively, as given by \eqref{eq-ETS}.
	\end{defn}

	The terminology of Definition \ref{definition-tw-sh-exp} arises from the fact that the tensor fields $\epsilon$, $\tau$, and $\sigma$ defined by \eqref{eq-ETS} quantify the amount of expansion, twist and shear that a bundle of rays abreast to a given geodesic tangent to a generator of $\mc{K}$ undergoes as one moves along it. See e.g.\ \cite{Penrose1986} for details.
	
	\begin{prop}
		Let $(\mc{M},g,K)$ be an optical geometry with associated congruence of null geodesics $\mc{K}$. Let $\mathring{T}$ be the intrinsic torsion of the optical structure. Then
		\begin{enumerate}
			\item the \emph{expansion} of $\mc{K}$ is the image of the projection of $\mathring{T}$ to $\mc{G}_{-1}^0$;
			\item the \emph{twist} of $\mc{K}$ is the image of the projection of $\mathring{T}$ to $\mc{G}_{-1}^1$;
			\item the \emph{shear} of $\mc{K}$ is the image of the projection of $\mathring{T}$ to $\mc{G}_{-1}^2$.
		\end{enumerate}
	\end{prop}
	
	\begin{proof}
		This follows directly from Definition \ref{definition-tw-sh-exp} and the well-known formulae for the exterior derivative and the Lie derivative in terms of the Levi-Civita. Choose a section $k$ of $K$ and set $\kappa =g(k,\cdot)$. Then
		\begin{align*}
			\nabla_a \kappa_b & = (\d \kappa)_{a b} + \frac{1}{2} \mathsterling_k g_{a b} \, ,
		\end{align*}
		After contracting with $\delta^a_i \delta^b_j$, we immediately identify the first term with the twist of $k$. The trace of the second term with respect to $h^{i j}$ encodes the expansion, while its tracefree part, its shear.\end{proof}
	
	\begin{rem}
		In dimension six, the twist splits further into a self-dual part and an anti-self-dual part.
	\end{rem}
	
	As an immediate consequence of this proposition, we obtain:
	\begin{prop}
		Let $(\mc{M}, g,K)$ be an optical geometry with congruence of null curves $\mc{K}$ and intrinsic torsion $\mathring{T}$. Then
		\begin{enumerate}
			\item $\mc{K}$ is non-expanding geodetic if and only if the intrinsic torsion $\mathring{T} \in \Gamma(\slashed{\mc{G}}_{-1}^0)$;
			\item $\mc{K}$ is non-twisting geodetic if and only if the intrinsic torsion $\mathring{T} \in \Gamma(\slashed{\mc{G}}_{-1}^1)$;
			\item $\mc{K}$ is non-shearing geodetic if and only if the intrinsic torsion $\mathring{T} \in \Gamma(\slashed{\mc{G}}_{-1}^2)$.
		\end{enumerate}
	\end{prop}
	For clarity, we record these results in Table \ref{tab-geom-intors}.
	\begin{center}
		\begin{table}
			\begin{displaymath}
				{\renewcommand{\arraystretch}{1.5}
					\begin{array}{|c|c|c|}
						\hline
						\text{Congruence} & \text{Intrinsic torsion} \\
						\hline
						\hline
						\text{geodetic} & \slashed{\mc{G}}_{-2}^0  = \mc{G}^{-1} \\
						\hline
						\hline
						\text{non-expanding} & \slashed{\mc{G}}_{-1}^0  \\
						\text{non-twisting} & \slashed{\mc{G}}_{-1}^{1}  \\
						\text{non-shearing} & \slashed{\mc{G}}_{-1}^2 \\
						\hline
						\hline
						\text{parallel} & \slashed{\mc{G}}_0^0 = \{ 0 \} \\
						\hline
				\end{array}}
			\end{displaymath}
			\caption{\label{tab-geom-intors} Geometric properties and intrinsic torsion}
		\end{table}
	\end{center}

	\begin{rem}\label{rem-twist-reductions}
		A non-integrable optical geometry $(\mc{M},g,K)$ with congruence of null geodesics $\mc{K}$ can therefore fall into eight distinct classes according to whether the expansion, twist or shear vanishes or not. However, if non-vanishing, the twist provides an additional geometric structure on $\mc{M}$, and thus, a further reduction of the structure group of the frame bundle, and more specifically, of the screen bundle. Indeed, the stabiliser of the twist can be seen as a closed Lie subgroup of $\Sim^0(n)$, and more precisely, a reduction of the semi-simple part $\SO(n)$ of $\Sim^0(n)$. Further, since any section $\kappa$ of $\Ann(K^\perp)$ is preserved along the null geodesics of $\mc{K}$, so must $\d \kappa$ by the naturality of the exterior derivative. This means that the twist is always \emph{preserved} along the flow of $\mc{K}$, and in fact defines an additional geometric structure on the leaf space of $\mc{K}$. As we shall see, the r\^{o}le of the shear is quite different: it is the \emph{obstruction} to the conformal class of the screen bundle metric being preserved along $\mc{K}$. Thus, it is the absence of shear that defines an additional structure on $\mc{M}$, and more specifically, on the leaf space of $\mc{K}$.
	\end{rem}
	
	\subsubsection{Non-expanding congruences}
	The absence of expansion of a congruence of null geodesics will be relevant mostly in the context of non-shearing congruences. Here, we record only the following result, which follows from \eqref{eq-exp} and the fact that, for any $k \in \Gamma(K)$, $\mathsterling_k \left(k \hook \varepsilon \right) = \div k \, \left(k \hook \varepsilon \right)$.
	
	\begin{lem}
		Let $(\mc{M}, g,K)$ be an optical geometry with non-expanding congruence of null geodesics $\mc{K}$. Then for any $k \in \Gamma(K)$ such that the curves of $\mc{K}$ are affinely parametrised, we have
		\begin{align*}
			\mathsterling_k (k\hook \varepsilon) & = 0 \, .
		\end{align*}
		In particular, $k \hook \varepsilon$ descends to an orientation $(n+1)$-form on the leave space $(\ul{\mc{M}}, \ul{H})$ of $\mc{K}$.
	\end{lem}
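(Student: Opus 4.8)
The plan is to reduce the statement to the vanishing of $\dv k$ and then invoke the quoted identity $\mathsterling_k(k\hook\varepsilon) = (\dv k)\,(k\hook\varepsilon)$. First I would observe that affine parametrisation, $\nabla_k k = 0$, together with metric compatibility $\nabla g = 0$, forces $\nabla_k \kappa = g(\nabla_k k, \cdot) = 0$, where $\kappa = g(k,\cdot)$. Substituting this into the definition \eqref{eq-exp} of the expansion, $\rho\,\kappa = \kappa\,\dv k - \nabla_k \kappa$, collapses it to $\rho\,\kappa = (\dv k)\,\kappa$. Since $k$ is nowhere vanishing and $g$ is non-degenerate, $\kappa$ is nowhere zero, so this yields $\rho = \dv k$. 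As the congruence is non-expanding, $\rho = 0$, and hence $\dv k = 0$.

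Next I would confirm the auxiliary identity, noting that it requires neither geodesy nor non-expansion and is purely a consequence of $\varepsilon$ being a closed top form. By Cartan's formula, $\mathsterling_k(k\hook\varepsilon) = \d\,\iota_k\iota_k\varepsilon + \iota_k\,\d\,\iota_k\varepsilon$. The first term vanishes because $\iota_k\iota_k\varepsilon = 0$ for the skew form $\varepsilon$. For the second, since $\varepsilon$ is of top degree we have $\d\varepsilon = 0$, so Cartan's formula applied to $\varepsilon$ gives $\d\,\iota_k\varepsilon = \mathsterling_k\varepsilon = (\dv k)\,\varepsilon$, the last equality being the standard expression for the Lie derivative of the volume form. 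Contracting with $k$ then gives $\iota_k\,\d\,\iota_k\varepsilon = (\dv k)\,(k\hook\varepsilon)$, which is precisely the quoted identity. Combining it with $\dv k = 0$ from the previous step immediately yields $\mathsterling_k(k\hook\varepsilon) = 0$.

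For the ``in particular'' claim I would check that $\omega := k\hook\varepsilon$ is basic for the foliation tangent to $K$, which is exactly the criterion for it to be a pull-back $\varpi^*\ul{\omega}$ of a form on $\ul{\mc{M}}$. On the one hand $\iota_k\omega = \iota_k\iota_k\varepsilon = 0$, so $\omega$ annihilates $K$; on the other hand I have just shown $\mathsterling_k\omega = 0$. For an arbitrary generator $fk$ of $\mc{K}$, the rescaling identity $\mathsterling_{fk}\omega = f\,\mathsterling_k\omega + \d f \wedge \iota_k\omega = 0$ shows $\omega$ is invariant along every generator. Hence $\omega$ descends to a well-defined $(n+1)$-form $\ul{\omega}$ on $\ul{\mc{M}}$. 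Since $k$ is nowhere vanishing and $\varepsilon$ is a volume form, $\omega$ is a nowhere-vanishing $(n+1)$-form, and therefore so is $\ul{\omega}$; being of top degree on the $(n+1)$-dimensional leaf space, it is an orientation form.

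There is no serious obstacle here: the computation is routine once $\dv k = 0$ is in hand. The only points that require a little care are (i) isolating the geometric input, namely that the auxiliary identity holds unconditionally so that non-expansion enters solely through $\dv k = 0$; and (ii) in the descent, verifying invariance under \emph{all} generators $fk$ rather than merely the affinely parametrised one, which the rescaling identity for $\mathsterling_{fk}$ handles cleanly, together with the observation that $\ul{\omega}$ remains nowhere vanishing.
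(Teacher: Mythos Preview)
Your proof is correct and follows exactly the approach sketched in the paper, which simply records the result as a consequence of \eqref{eq-exp} together with the identity $\mathsterling_k(k\hook\varepsilon) = (\dv k)(k\hook\varepsilon)$. You have supplied the details the paper omits, including a clean verification of that identity via Cartan's formula and an explicit check of basicness for the descent.
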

	
	\subsubsection{Non-twisting congruences}\label{sec-non-twist-optical}
	Let $(\mc{M}, g,K)$ be an optical geometry with non-twisting congruence of null geodesics $\mc{K}$, i.e., the twist of $\mc{K}$ is identically zero. From the definition \eqref{eq-twist}, this can be equivalently expressed by the condition 
	\begin{align*}
		\kappa \wedge \d \kappa & = 0 \, , & \mbox{for any $\kappa \in \Gamma(\Ann(K^\perp))$.}
	\end{align*}
	A direct application of the Frobenius theorem gives immediately:
	\begin{prop}\label{prop-non-twisting}
		Let $(\mc{M}, g,K)$ be an optical geometry with congruence of null geodesics $\mc{K}$ and local leaf space $(\ul{\mc{M}},\ul{H})$. The following statements are equivalent.
		\begin{enumerate}
			\item $\mc{K}$ is non-twisting.
			\item $\mc{M}$ is locally foliated by a one-parameter family of $(n+1)$-dimensional submanifolds tangent to $K^\perp$, each containing an $n$-parameter family of null geodesics of $\mc{K}$.
			\item $\ul{H}$ is integrable, i.e., $\ul{\mc{M}}$ is locally foliated by $n$-dimensional submanifolds tangent to $\ul{H}$.
		\end{enumerate}
	\end{prop}
	
	\begin{rem}
		We note that for a non-twisting congruence of null geodesics $\mc{K}$, one can always find a local generator $k$ of $\mc{K}$ such that the $1$-form $\kappa = g(k,\cdot)$ is exact, i.e., $\kappa = \d u$ for some locally function on $\mc{M}$. The global existence of such a $1$-form is obstructed by the first De Rham cohomology group.
	\end{rem}

	\subsubsection{Non-shearing congruence}\label{sec-non-shear-optical}
	For any non-shearing congruence of null geodesics $\mc{K}$, we have, for any generator $k$ of $\mc{K}$,
	\begin{align*}
		\mathsterling_k g (v , w ) &= \frac{2}{n}\, \epsilon \, g (v , w ) \, ,  & \mbox{for any $v, w \in \Gamma(K^\perp)$,}
	\end{align*}
	where $\epsilon$ is the expansion of $k$. The geometric interpretation is then immediate.
	\begin{prop}\label{prop-non-shearing}
		Let $(\mc{M}, g,K)$ be an optical geometry with congruence of null geodesics $\mc{K}$ with leaf space $(\ul{\mc{M}},\ul{H})$. The following statements are equivalent.
		\begin{enumerate}
			\item $\mc{K}$ is non-shearing.
			\item The conformal class of the induced screen bundle metric is preserved along the geodesics of $\mc{K}$.
		\end{enumerate}
		If $\mc{K}$ is in addition non-expanding, then the induced metric on the screen bundle $H_K$ descends to a bundle metric on $\ul{H}$.
	\end{prop}
	
	The next result exhibits the existence of a family of linear connections compatible with an optical structure, and in particular, when the congruence is geodetic and non-shearing.
\begin{prop}\label{prop:lin_conn}
	Let $(\mc{M},g,K)$ be an optical geometry with congruence of null curves $\mc{K}$. Fix a splitting $(\ell^a , \delta^a_i , k^a)$, choose a $1$-form $f_a$, and denote by $\gamma_i$, $\epsilon h_{i j}$, $\tau_{i j} = \tau_{[i j]}$, $\sigma_{i j} = \sigma_{(i j)_\circ}$ and $E_i$ the components of $\nabla_{a} \kappa_{b}$ with respect to this splitting. Here $\kappa_a = g_{a b} k^{b}$ and $h_{i j}$ is the screen bundle metric.
	
	For each $t \in \R$, define a linear connection $\nabla^t$ by
	\begin{align*}
		\nabla^t_a \xi_b & = \nabla_a \xi_b - Q_{a b}{}^c \xi_c \, , & \mbox{for any $1$-form $\xi_a$,}
	\end{align*}
	where $Q_{a b c} = Q(t)_{a b c} = Q_{a b}{}^{d} g_{c d}$ is a tensor with components defined by
	\begin{align*}
		Q^{0 0 0} = Q_{0}{}^{0 0} = Q^{0}{}_{0}{}^0 = Q_{i}{}^{0 0} & = 0 \, , \\
		Q^0{}_{j}{}^0 = - Q^{0 0}{}_{j} & =  \gamma_j \, , &
		Q^0{}_{(j k)} & = -\frac{1}{n} f^0 h_{j k}  \, , \\
		Q_{i j}{}^0 = - Q_{i}{}^0{}_{j} & =  \frac{\epsilon}{n} h_{i j} + \tau_{i j} + \sigma_{i j}  \, , &
		Q_{i (j k)} & = -\frac{1}{n} f_i h_{j k}  \, , \\
		Q_{0 j}{}^0 = Q_{j 0}{}^0 = - Q^{0}{}_{0 k} = - Q_{0}{}^0{}_{j} & =  E_j \, , &
		Q_{0 (j k)} & = -\frac{1}{n} f_0 h_{j k}  \, , \\
		Q^{0}{}_{[j k]} & = t \tau_{j k} \, ,
	\end{align*}
	and all other components arbitrary.
	
	Then $\nabla^t$ is a connection compatible with $K$ and the conformal class $[h]$ on the screen bundle with torsion given by
	\begin{align*}
		T^{0}{}_{j}{}^{0} & = - \gamma_{j} \, , & T^{0}{}_{j}{}_{k} & =  \frac{1}{n} ( f^0 - \epsilon )h_{j k}  - \sigma_{j k}  \, ,  \\
		T^{0}{}_{0}{}^{0} & = 0  \, , & T^{0}{}_{0 k} & = 0 \, , \\
		T_{i j}{}^{0} & = - 2 \tau_{i j} \, , & T^{0}{}_{i j} = - T_{i}{}^{0}{}_{j} & = - \left( 1 + t \right) \tau_{i j} \, , \\
		T_{0 j}{}^{0} & = 0 \, , 
	\end{align*}
	and all other components arbitrarily determined by $-2Q_{[a b]}{}^c$.

As a special case, assuming that $\mc{K}$ is a non-shearing congruence of null geodesics, i.e.\
\begin{align*}
	(\nabla_{a} \kappa_{[b} ) \kappa_{c]} & = \frac{\epsilon}{n} \, h_{a [b} \kappa_{c]} + \tau_{a [b} \kappa_{c]} + \kappa_a E_{[b} \kappa_{c]} \, ,
\end{align*}
where $h_{a b} \delta^a_i \delta^b_j = h_{i j}$, $\tau_{a b} \delta^a_i \delta^b_j = \tau_{i j}$ and $E_{a} \delta^a_i = E_i$, we may define a family of linear connections $\nabla^t$ by choosing $Q_{a b c}$ to be
\begin{align*}
	Q_{a b c} & := \frac{\epsilon}{n} \left( h_{a b} \lambda_{c} - 2 h_{c (a} \lambda_{b)} \right) + 2 \tau_{a [b} \lambda_{c]} + t \, \lambda_a \tau_{b c} - 2 \kappa_{(a} \lambda_{b)} E_c + 2 \kappa_{(a} E_{b)} \lambda_{c} \, ,
\end{align*}
where $\lambda_{a} = \ell^{b} g_{a b}$. Then for each $t \in \R$, $\nabla^t$ is a linear connection that satisfies
\begin{align*}
	(\nabla^t_{a} \kappa_{[b} ) \kappa_{c]} & = 0 \, , &
	\nabla^t_a g_{b c} & = \frac{2}{n} \epsilon \lambda_a h_{b c} + 2 \, \lambda_a \kappa_{(b} E_{c)} - 2 \, E_a \kappa_{(b} \lambda_{c)} \, ,
\end{align*}
and has torsion given by
\begin{align*}
	T^t_{a b c} &  = - 2 \tau_{a b} \lambda_{c} - \left( 2 + 2 t \right) \lambda_{[a} \tau_{b] c} \, .
\end{align*}
In particular, the torsion satisfies
\begin{align*}
	T^{-2}_{[a b c]} & = 0 \, , & T^{-1}_{a (b c)} & = 0 \, .
\end{align*}
Further, for any $t \in \R$, $\nabla^t$ is torsion-free if and only if $\mc{K}$ is also non-twisting.
\end{prop}

\begin{proof}
It suffices to compute
\begin{align*}
	\nabla^t_a \kappa_b & = \nabla_a \kappa_b - Q_{a b}{}^c \kappa_c \, , &
	\nabla^t_a g_{b c} & = - 2 \, Q_{a (b c)} \, ,
\end{align*}
with the definitions given.
\end{proof}
	
	\begin{rem}
		The linear connections defined in the proposition above depend only on the choice of $k$ and $\lambda$. Note however that these are not the only connections preserving $K$ and the conformal structure on $H_K$.
	\end{rem}
	
	\subsection{Non-twisting non-shearing congruences}\label{sec-non-twist-non-shear-optical}

	These include two important classes of Lorentzian manifolds:
	\begin{enumerate}
		\item \emph{Robinson-Trautman spacetimes} for which the congruence is expanding, and
		\item \emph{Kundt spacetimes} for which the congruence is non-expanding.
	\end{enumerate}
	In other words, $(\mc{M},g,K)$ is a Robinson-Trautman spacetime if and only if the intrinsic torsion $\mathring{T}$ is a section of $\slashed{\mc{G}}_{-1}^1 \cap \slashed{\mc{G}}_{-1}^2$, but not a section of $\slashed{\mc{G}}_{-1}^0$,
	while a Kundt spacetime is one for which $\mathring{T}$ is a section of $\mc{G}^0 = \slashed{\mc{G}}_{-1}^0 \cap \slashed{\mc{G}}_{-1}^1 \cap \slashed{\mc{G}}_{-1}^2$.
	
	\begin{rem}
		In dimension three, the screen bundle of an optical gometry is one-dimensional, from which it follows that a congruence of null geodesics is necessarily non-twisting and non-shearing.
	\end{rem}
	
	Robinson-Trautman spacetimes were introduced in dimension four in \cite{Robinson1961/62}, and were later generalised to higher dimensions in \cite{Podolsky2006a, Ortaggio2008, Ortaggio2013, Ortaggio2015, Podolsky2015, Podolsky2016}.

	Kundt spacetimes were introduced in \cite{Kundt1961} in dimension four. Their higher-dimensional generalisations are investigated in the context of the algebraic properties of the Weyl tensor in  \cite{Podolsky2008, Krtous2012, Podolsky2013}. More recently, a $G$-structure approach to Kundt spacetimes is adopted in \cite{Aadne2019}.
	
	As we shall see in Section \ref{sec:conf-opt-str}, Robinson-Trautman spacetimes and Kundt spacetimes are conformally related, and for this reason, share important properties \cite{Podolsky2015}.
	
	\begin{prop}
		Let $(M,g,K)$ be an optical geometry with congruence of null curves $\mc{K}$. The following two statements are equivalent:
		\begin{enumerate}
			\item The distribution $K^\perp$ is integrable and the leaves tangent to $K^\perp$ are totally geodetic.
			\item $\mc{K}$ is a non-expanding, non-twisting and non-shearing congruence of null geodesics.
		\end{enumerate}
	\end{prop}
	
	\begin{proof}
		We must show $\nabla_v w \in \Gamma(K^\perp)$ for any $v, w \in \Gamma(K^\perp)$. But then, using our previous notation,
		\begin{align*}
			0 & = (\delta^a_i \nabla_a \delta^b_j) \kappa_b = - (\delta^a_i \nabla_a  \kappa_b) \delta^b_j  = \frac{1}{n} \epsilon h_{i j} + \tau_{i j} + \sigma_{i j} \, ,
		\end{align*}
		i.e., $\epsilon = \tau_{i j} = \sigma_{i j} = 0$.
	\end{proof}
	
	\subsubsection{Expanding non-twisting non-shearing congruences of null geodesics}
	In the neighbourhood of every point, there exists coordinates $(u,v,x^i)$ such that the Robinson--Trautman metric takes the form \cite{Robinson1961/62,Podolsky2006a}
	\begin{align*}
		g & = 2 \, \d u  \d v + 2 A_i \d x^i \d u+ B (\d u)^2 + \e^{2 \phi} \ul{h}_{i j} \d x^i  \d x^j \, ,
	\end{align*}
	where $v$ is an affine parameter along the congruence $\mc{K}$, $A_i$, $B$ are functions on $\mc{M}$, $\ul{h}_{i j}$ are functions on $\ul{\mc{M}}$, and $\phi$ is a function of $v$ only.
	
	\begin{exa}[Tangherlini-Schwarzschild metric]
		Let $(S^n,\ul{h})$ be an $n$-sphere $S^{n}$ with its round metric. Define the wrapped Lorentzian metric on $\mc{M} = \R \times \R^+ \times S^n$ with $(t,r) \in \R \times \R^+$
		\begin{align*}
			g & = - F(r) \d t^2 + F(r)^{-1} \d r^2 + r^2 \ul{h} \, , & \mbox{where} & &
			F(r) & = 1 - \frac{c}{r^{n-1}} \, ,
		\end{align*}
		and $c$ is a constant typically interpreted to be proportional to the mass of a static black hole. This is a Ricci-flat metric.
		
		The $1$-form
		\begin{align*}
			\kappa & = - \d t + F(r)^{-1} \d r \, , 
		\end{align*}
		annihilates the orthogonal complement of an optical structure $K$ with expanding non-twisting non-shearing congruence of null geodesics. Similarly, the $1$-form $\lambda =  \frac{1}{2} F(r) \d t + \frac{1}{2} \d r$ defines a second optical structure with expanding non-twisting non-shearing congruence of null geodesics.
		
		Other generalisations can be found   in \cite{Podolsky2006a} by replacing $S^n$ by some $n$-dimensional compact Einstein Riemannian manifold.
	\end{exa}
	
	\subsubsection{Non-expanding non-twisting non-shearing congruences of null geodesics}
	
	Combining Propositions \ref{prop-non-twisting} and \ref{prop-non-shearing} yields:
	\begin{cor}\label{proposition-SF->intconfH}
		Let $(\mc{M},g,K)$ be an optical geometry with  congruence of null geodesics $\mc{K}$. The following statements are equivalent.
		\begin{enumerate}
			\item $\mc{K}$ is non-expanding non-twisting non-shearing.
			\item the (local) leaf space $(\ul{\mc{M}},\ul{H},\ul{h})$ of $\mc{K}$ is foliated by $n$-dimensional submanifolds tangent to $\ul{H}$, each of which inherits a Riemannian structure from $h$.
		\end{enumerate}
	\end{cor}
	
	For an optical geometry $(\mc{M}, g , K)$ with non-expanding non-twisting non-shearing  congruence of null geodesics $\mc{K}$, in the neighbourhood of every point, there exists coordinates $(u,v,x^i)$ such that the metric takes the form
	\begin{align}\label{eq:Kundt_metric}
		g & = 2 \,  \d u \d v + 2 \, A_i \d x^i  \d u  + B (\d u)^2  +  \ul{h}_{i j} \d x^i \d x^j \, .
	\end{align}
	where $A_i = A_i (u,v,x^i)$, $B = B (u,v,x^i)$ and $\ul{h}_{i j} =  \ul{h}_{(i j)} (u,x^i)$.
	Here, $v$ is an affine parameter along the geodesics of $\mc{K}$, and $(u,x^i)$ are local coordinates on the leave space of $K$ with $u$ parametrising the leaves of $K^\perp$. Thus,
	\begin{align*}
		K & = \mathrm{span} \left( \frac{\partial}{\partial v} \right) \, , &
		K^\perp = \mathrm{span}\left( \frac{\partial}{\partial v} , \frac{\partial}{\partial x^i} \right) \, .
	\end{align*}
	The form of this metric remains the same under a change of coordinates $x^i = x^i (u, \wt{x})$. The $1$-form $\kappa = \d u$ annihilating $K^\perp$ remains unchanged, and we interpret the functions $\ul{h}_{i j}$ as the components of the screen bundle metric on $H_K$.
	
	\begin{rem}
		References \cite{Podolsky2013, Podolsky2015, Podolsky2016} give these special cases in terms of Weyl curvature conditions. Of interest is when the Weyl tensor is \emph{algebraically special} in the sense that $\kappa_{[a} W_{b c] f [d} k^f \kappa_{e]} = 0$,  where $k$ generates $\mc{K}$ and $\kappa = g(k,\cdot)$. In this case, the $v$-dependence of the functions $A_i$ can be integrated, i.e.\
		$A_i = \ul{A}_i^{(0)} + v \, \ul{A}_i^{(1)}$,
		where $\ul{A}_i^{(0)} = \ul{A}_i^{(0)} (u,x)$ and $\ul{A}_i^{(1)} = \ul{A}_i^{(1)} (u,x)$.
	\end{rem}

	As a direct corollary of Proposition \ref{prop:lin_conn}, we have:
	\begin{cor}\label{prop:lin_conn_Kundt}
		Let $(\mc{M}, g, K)$ be an optical geometry with non-expanding non-twisting non-shearing congruence $\mc{K}$ of null geodesics. Let $k^a$ be a generator of $\mc{K}$ for which the curves are affinely parametrised, and set $\kappa_a = g_{a b} k^b$. Choose a null $1$-form $\lambda_a$ such that $\lambda_a k^a =1$, so that
		\begin{align*}
			(\nabla_{a} \kappa_{[b} ) \kappa_{c]} & = \kappa_a E_{[b} \kappa_{c]}
		\end{align*}
		for some $1$-form $E_a$ annihilated by $K$.
		
		Define a linear connection
		\begin{align*}
			\nabla'_a v^b & = \nabla_a v^b + Q_{a c}{}^b v^c \, , &  \mbox{for any vector field $v^a$,}
		\end{align*}
		where
		\begin{align*}
			Q_{a b c} = Q_{a b}{}^{d} g_{d c} = - 2 \kappa_{(a} \lambda_{b)} E_c + 2 \kappa_{(a} E_{b)} \lambda_{c} \, .
		\end{align*}
		Then $\nabla'$ a torsion-free linear connection that satisfies
		\begin{align*}
			(\nabla'_{a} \kappa_{[b} ) \kappa_{c]} & = 0 \, ,  \\
			\nabla'_a g_{b c} & = 2 \, \lambda_a \kappa_{(b} E_{c)} - 2 \, E_a \kappa_{(b} \lambda_{c)} \, .
		\end{align*}
		In particular, $\nabla'$ preserves $K$ and the screen bundle metric.
	\end{cor}

	\begin{rem}
		The connection $\nabla'$ defined in Corollary \ref{prop:lin_conn_Kundt} is not the unique torsion-free linear connection compatible with $K$ and the screen bundle metric. Note also $\nabla'$ satisfies
		$\nabla'_{(a} g_{b c)} = 0$.
	\end{rem}
	
	\subsubsection{Integrable optical structures}\label{sec-parallel-optical}
	We finally arrive at the case where the optical geometry $(\mc{M},g,K)$ is integrable as a $G$-structure. This is equivalent to the Levi-Civita connection preserving the line distribution $K$. In other words, any non-vanishing section $k$ of $K$ is \emph{recurrent}, i.e.,
	\begin{align*}
		(\nabla_a k^{[b} ) k^{c]} & = 0 \, .
	\end{align*}
	This is a special case of Kundt spacetimes, and on the other hand the Lorentzian special case of \emph{Walker} manifolds \cite{walker50I}, see also \cite{walkerbook}, which are defined as pseudo-Riemannian manifolds with a distribution of parallel $p$-planes. In particular, the condition imposes a further condition on the local form of the metric: the functions $A_i = A_i^{(0)} (u,x)$  in formula \ref{eq:Kundt_metric}  are independent of $v$.
	
	Integrable optical structures, i.e., with a parallel null line distribution $K$, are crucial in the classification of holonomy groups of Lorentzian manifolds, see for instance, 
	\cite{bb-ike93}, \cite{bryant000}, \cite{Figueroa-OFarri00} and \cite{leistnerjdg}. Within the integrable optical structures there is a rich hierarchy. On the one hand there are reductions of the screen bundle to a subgroup $H$  of $\SO^0(n)$. Based on results in \cite{bb-ike93}, it was shown in  \cite{leistnerjdg} that $H$ must be a Riemannian holonomy group, a result that yields a classification of Lorentzian holonomy groups. This also provides a classification of Lorentzian manifolds with parallel spinors, see also the survey \cite{galaev-leistner-esi}. 
	Global aspects of Lorentzian manifolds with special holonomy are discussed in \cite{Laerz2010}, \cite{Baum2014}, \cite{leistner-schliebner13} and \cite{Schliebner15}.
	
	On the other hand there is the reduction to the stabiliser $\SO(n)\ltimes \R^n$ of a null vector, which implies the existence of a parallel section of $K$, that is, a parallel null vector field. In this case, not only the functions $A_i = \ul{A}_i^{(0)} (u,x)$ in  \eqref{eq:Kundt_metric}  are independent of $v$, but also the function $B=\ul{B}(u,x)$. Explicitly, the metric is given by
	\begin{align}\label{pnvf}
		g & = 2 \,  \d u\, \d v + 2  \, \ul{A}_i (u,x) \d x^i\d u + \ul{B}(u,x) (\d u)^2 +  \ul{h}_{i j}(u,x)  \d x^i\,\d x^i  \, , 
	\end{align}
	and $\partial_v$ is a parallel null vector field. Sometimes these manifolds are called {\em Brinkmann waves} or  {\em Brinkmann spaces}. The spaces discovered by Brinkmann   in  \cite{brinkmann24,brinkmann25} in the context of conformal geometry do indeed have a parallel vector null field, however they are of dimension four and in addition Ricci-flat. Consequently their holonomy reduces to $\R^2\subset \SO(2)\ltimes \R^2$ and the metric is of the form 
	\[g=2\,\d u\,\d v +B(u,x,y) (\d u)^2+\d x^2+\d y^2,\] 
	so a very special case of the metrics in \eqref{pnvf}. The metrics found by Brinkmann, without the Ricci-flat condition,   generalise to higher dimensions to the so called {\em pp-waves}, which stands for `plane-fronted  with parallel rays' (that is $\ul{h}_{i j}=\delta_{ij}$  and $\partial_v$ parallel). For pp-waves   the holonomy reduces to the abelian ideal  $\R^n$ in $\SO(n)\ltimes \R^n$. This reduction is equivalent to the existence of a parallel section of $K$ and the flatness of the screen bundle $H_K\to \mathcal M$, or equivalently to the curvature condition that $R(X,Y)U\in K$,  for all $U\in K^\perp$ and $X,Y\in TM$, see \cite{leistner05c}. The local form of the metric \eqref{eq:Kundt_metric} for pp-waves becomes
	\begin{align}\label{pp}
		g & = 2 \,  \d u\,  \d v   + B(u,x)  (\d u)^2 +  \delta_{i j} \d x^i \, \d x^j \, .
	\end{align}
	Their Ricci tensor is  given by $\Delta B \,(\d u)^2$, where $\Delta$ is the Laplacian for the $x^i$ coordinates, so the vacuum Einstein equation simplifies to the Euclidean wave equation.
	These metrics have the interesting property that all their scalar invariants vanish. This was first observed by Penrose and established in higher dimension for example in \cite{Coley2004a,Coley2006}. 
	Within the pp-waves there are the so-called {\em plane waves} for which 
	\[ \ul{B}(u,x)=x^i \ul{Q}_{ij}(u)x^j\] 
	in the metric (\ref{pp}), with a $u$-dependent symmetric matrix $\ul{Q}_{ij}(u)$. Plane waves appear as Penrose-limits \cite{penrose76}, see also \cite{CveticLuPope02,BlauFigueroa-OFarriHullPapadopoulos02}, and as homogeneous supersymmetric M-theory backgrounds \cite{Figueroa-OFarriMeessenPhilip05}. Conformal aspects of pp-waves, in particular their ambient metrics are studied in \cite{leistner-nurowski08}.
	
	Homogeneous  plane waves and pp-waves have been studied and classified in \cite{blau-oloughlin03} and \cite{GlobkeLeistner16}. 
	The geodesic completeness of {\em compact} pp-waves is proved in \cite{leistner-schliebner13}.

	A subclass within the plane waves are the Lorentzian locally symmetric spaces with {\em non-constant sectional} curvature. They are given by
	\begin{align*}
		g & = 2 \,  \d u\,  \d v   +(x^i Q_{ij}x^j) (\d u)^2 +  \delta_{i j} \d x^i \, \d x^j \, ,
	\end{align*}
	where $Q_{ij}$ is a constant symmetric matrix.
	On   $\mc{M}=\R^{n+2}$ this defines  a  globally symmetric space with
	solvable transvection group, the so-called  {\em Cahen-Wallach spaces} \cite{cahen-wallach70}. 
	Their compact quotients are described in \cite{KathOlbrich15}.
	
	Note that all these subclasses of pp-waves are {\em} not distinguished  by a further reduction of the holonomy group but rather by conditions on the derivative of the curvature. If $\ul{Q}_{ij}$ is non-degenerate, for all of them the connected component of the holonomy group is equal to  $\R^n$ and hence are indecomposable. Also there are many more  examples of integrable optical structures that generalise the pp-waves and have been considered in the literature, for example the {\em plane fronted waves} \cite{candela-flores-sanchez03},  for which the $ \ul{h}_{i j}$ in (\ref{pnvf}) do not depend on $u$, i.e.,  $ \ul{h}_{i j}$ is just a Riemannian metric of the $x^i$ coordinates.  Examples of earlier results are in \cite{schimming74,takeno62} and of course there is a plethora of general relativity and physics literature about them.

	\subsection{Non-twisting shearing congruences of null geodesics}
	We present an example of an optical geometry with non-twisting shearing congruences of null geodesics.
	
	\begin{exa}[The black ring in dimension five]
		Using the coordinates $(t, x, y, \phi, \psi)$ given in \cite{Elvang2003,Pravda2005}, the five-dimensional black ring discovered in \cite{Emparan2002} is described by the metric
		\begin{multline*}
			g = - \frac{F(x)}{F(y)} \left( \d t + R \sqrt{\lambda \nu} (1+y) \d \psi \right)^2 + \\
			+ \frac{R^2}{(x-y)^2} \left( - F(x) \left( G(y) \d \psi^2 + \frac{F(y)}{G(y)} \d y^2 \right) + F(y)^2 \left( \frac{\d x^2}{G(x)} + \frac{G(x)}{F(x)} \d \phi^2 \right) \right),
		\end{multline*}
		where
		\begin{align*}
			F(\xi) & := 1 - \lambda \xi \, , & G(\xi) & := (1 - \xi^2)(1-\nu \xi) \, .
		\end{align*}
		Here, $R$, $\lambda$ and $\mu$ are positive constants with $\lambda, \nu <1$.
		There are a number of possible ranges for the coordinates to ensure that the metric is of Lorentzian signature. For specificity, we restrict ourselves only to the region
		\begin{align*}
			\left\{ (x,y,\phi,\psi,t) : -1 < x < 1 \, , \tfrac{1}{\lambda} < y < \tfrac{1}{\nu} \right\} \, .
		\end{align*}
		Then, following \cite{Taghavi-Chabert2011}, the $1$-forms 
		\begin{align*}
			\kappa & := \frac{R \sqrt{-F(x) G(y)}}{\sqrt{2} (x-y)} \left(\frac{\sqrt{F(y)}}{G(y)} \d y + \i  \d \psi \right), &
			\lambda & := \frac{R \sqrt{-F(x) G(y)}}{\sqrt{2} (x-y)} \left( \frac{\sqrt{F(y)}}{G(y)} \d y - \i  \d \psi \right)
		\end{align*}
		are real and define two optical structures $K$ and $L$ with expanding non-twisting shearing congruences of null geodesics.
	\end{exa}

	\subsection{Twisting congruences of null geodesics}
	As mentioned in Remark \ref{rem-twist-reductions}, the twist of an optical geometry with null geodesic congruence, if non-vanishing, provides an additional geometrical structure in its own right not only on the screen bundle, but also on the leaf space of the congruence. To determine which structures arise from the twist, we need to examine its algebraic properties.
	
	\begin{defn}
		Let $(\mc{M},g,K)$ be an optical geometry with congruence of null geodesics $\mc{K}$. The \emph{rank} of the twist of $\mc{K}$ is the rank of any section $\kappa \in \Gamma(\Ann(K^\perp))$, i.e., the positive integer $d$ such that $\kappa \wedge \left( \d \kappa \right)^d \neq 0$ and $\kappa \wedge \left( \d \kappa \right)^{d+1} = 0$.
	\end{defn}
	Clearly, $\mc{K}$ is non-twisting if and only if the twist has rank zero. If $\ul{\kappa}$ is the $1$-form on $\ul{\mc{M}}$ induced from $\kappa$, then $\ul{\kappa}$ has the same rank as $\kappa$.
	
	\subsubsection{Maximally twisting congruences}
	We now focus on the case $d=m$, where we assume $n=2m$ or $n=2m+1$.
	
	\begin{defn}
		Let $(\mc{M},g,K)$ be an optical geometry of dimension $2m$ or $2m+1$ with congruence of null geodesics  $\mc{K}$. We say that $\mc{K}$ is \emph{maximally twisting} if its twist has rank $m$, i.e., any section $\kappa$ of $\Ann(K^\perp)$ has maximal rank, i.e.\
		$\kappa \wedge (\d \kappa )^m \neq 0$ and $\kappa \wedge (\d \kappa )^{m+1} = 0$.
	\end{defn}
	Note that in dimensions $2m+2$, the twist then defines an orientation on the screen bundle, and hence on the leaf space. If $m$ is odd, this also singles out a direction for sections of $K$ (but not when $m$ is even.) In fact, as an alternative normalisation to Proposition \ref{prop-nor-twist}, one can pick a section of $K$ in such a way that $\kappa \wedge (\d \kappa)^m = k \hook \varepsilon$.
	
	\begin{rem}
		In dimensions four and five, a twisting congruence is necessarily maximally twisting.
	\end{rem}
	
	The next proposition is particularly important: it tells us that for a maximally twisting congruence, one can associate a distinguished splitting of the tangent bundle. This means in particular that the structure group of the frame bundle is reduced from $\Sim^0(n)$ to $\CO^0(n)$ --- in fact, the stabiliser of the twist yields a further reduction to a subgroup of $\CO^0(n)$.
	\begin{prop}\label{prop-adapted_frame_max_tw}
		Let $(\mc{M}, g, K)$ be $(2m+2)$-dimensional optical geometry with maximally twisting geodetic congruence $\mc{K}$. Then there exists a generator $k$ of $\mc{K}$ and a null vector field $\ell$ with $g(k, \ell) = 1$ such that   $\kappa = g( k , \cdot)$ satisfies
		\begin{align*}
			\d \kappa (k , \cdot) & = 0 \, , & \d \kappa (\ell , \cdot) & = 0 \, .
		\end{align*}
		In particular, the geodesics of $\mc{K}$ are affinely parametrised by $k$, and we can write
		\begin{align}
			\nabla_{[a} \kappa_{b]} & = \tau_{a b} \, ,   & \tau_{a b} = \tau_{i j} \delta^i_a \delta^j_b \, , \label{eq-can-tau} 
		\end{align}
		where $\tau_{i j}$ is the twist of $k$. The pair $(k^a , \ell^a)$ is unique up to boosts constant along $K$.
	\end{prop}
	
	\begin{proof}
		We can assume that there exists a generator $k$ of $K$ such that the geodesics are affinely parametrised. Then $\kappa = g(k, \cdot)$ takes the form
		\begin{align*}
			\nabla_{[a} \kappa_{b]} = \tau_{a b} + 2 \, \kappa_{[a} \beta_{b]} \, ,
		\end{align*}
		where $\tau_{a b}=\tau_{i j} \delta^i_a \delta^j_b$ represents the twist, and $\beta_{a}k^a=0$. We seek $\wt{\ell}^a = \ell^a - \phi^i \delta_i^a - \frac{1}{2} \phi^i \phi_i k^a$, where $g(k,\ell)=1$,  such that $\d \kappa (\wt{\ell}, \cdot) = 0$. We find
		\begin{align*}
			(- \phi^i \tau_{i j} + \beta_j )\delta^j_b + \phi^i \beta_i \kappa_b & = 0 \, .
		\end{align*}
		Since $\tau_{i j}$ is non-degenerate, this equation has unique solution  $\phi^i = (\tau^{-1})^{i j} \beta_j$.
		
		Thus we have associated to $k^a$ a unique null vector field $\ell^a$. There remains the freedom of changing the pair $(k, \ell)$ by means of a boost \eqref{eq-boost} constant along $K$ so as to preserve the affine parametrisation of the geodesics with respect to $k$.
	\end{proof}
	
	We now turn to the interpretation on the leaf space. Let us first recall some notions. A \emph{contact distribution} or \emph{contact structure} on a $(2m+1)$-dimensional smooth manifold $\ul{\mc{M}}$ is a rank-$2m$ distribution $\ul{H}$ such that any non-vanishing section $\underline{\kappa}$ of $\Ann(\ul{H})$, referred to as a \emph{contact $1$-form}, has maximal rank, i.e., $\underline{\kappa} \wedge \left(  \d \underline{\kappa} \right)^{m} \neq 0$. We refer to the pair $(\ul{\mc{M}}, \ul{H})$ as a \emph{contact manifold}.
	Every choice of contact $1$-form $\underline{\kappa}$ in $\Ann(\ul{H})$ defines a canonical splitting of the exact sequence $0 \longrightarrow \ul{H}  \longrightarrow  T \ul{\mc{M}} \longrightarrow  T \ul{\mc{M}} / \ul{H} \longrightarrow  0$ by means of the \emph{Reeb vector field}, the unique vector field $\underline{\ell}$ that satisfies $\underline{\kappa} (\underline{\ell}) = 1$ and $\d \underline{\kappa} (\underline{\ell}, \cdot) = 0$.
	
	\begin{prop}\label{proposition-maxTw-contact}
		Let $(\mc{M}, g,K)$ be a $(2m+2)$-dimensional optical geometry with congruence of null geodesics $\mc{K}$ and leaf space $(\ul{\mc{M}},\ul{H})$. The following statements are equivalent.
		\begin{enumerate}
			\item $\mc{K}$ is maximally twisting. \label{item:K-max-tw}
			\item $\ul{H}$ is a contact distribution. \label{item:contact}
		\end{enumerate}
		
		Further,  for each pair $(k,\ell)$ where $k$ generates affinely parametrised geodesics of $\mc{K}$ and $\ell$ is a null vector field with $g(k, \ell) = 1$, the $1$-form $\kappa = g( k , \cdot)$ satisfies
		\begin{align*}
			\d \kappa (k , \cdot) & = 0 \, , & \d \kappa (\ell , \cdot) & = 0 \, ,
		\end{align*}
		$\kappa$ descends to a contact form on $(\ul{\mc{M}}, \ul{H})$ and $\ell + K^\perp$ to its corresponding Reeb vector field.
	\end{prop}
	
	\begin{proof}
		The equivalence between statements \eqref{item:K-max-tw} and \eqref{item:contact} follows immediately from Lemma \ref{lem-proposition-geod}, while the existence of the pair $(k,\ell)$ with the properties stated in the proposition follows from Proposition \ref{prop-adapted_frame_max_tw}.
	\end{proof}
	
	Dealing with the odd-dimensional case, we have the following proposition:
	\begin{prop}
		Let $(\mc{M},g,K)$ be a $(2m+3)$-dimensional optical geometry with maximally twisting congruence of null geodesics  $\mc{K}$ and leaf space $(\ul{\mc{M}}, \ul{H})$. Then for any non-vanishing section $\ul{\kappa}$ of $\Ann(\ul{H})$, the distribution $\ul{H}' :=\ker \d \underline{\kappa} \cap \ker \underline{\kappa}$ is a line subbundle of $\ul{H}$, and $\ul{H}/\ul{H}'$ descends to a contact distribution on the $(2m+1)$-dimensional leaf space of $\ul{H}'$.
	\end{prop}
	
	\begin{proof}
		The $1$-form has rank $m$ by assumption, and it easily follows that $\ul{H}' := \ker \d \ul{\kappa} \cap \ker \ul{\kappa}$ must be one-dimensional. Let $u$ be a section of $\ul{H}'$. Then clearly $\mathsterling_u \ul{\kappa} = 0$ and $\mathsterling_u \d \ul{\kappa} = 0$, i.e. $\ul{\kappa}$ descends to a $1$-form of rank $m$ on the $(2m+1)$-dimensional leaf space of $\ul{H}'$, and thus annihilates a contact distribution there.
	\end{proof}
	
	\begin{exa}[The Kerr metric in dimension four]\label{exa:Kerr4}
		The \emph{Kerr metric} describes a rotating black hole with mass $M$ and angular momentum $a$. In local coordinates $(r, u, \theta, \phi)$, it is given by \cite{Kerr1963} 
		\begin{multline*}
			g = \left(r^2 + a^2 \cos^2 \theta \right) \left( (\d \theta)^2 + \sin^2 \theta (\d \phi)^2 \right) + 2 \left( \d u + a \sin^2 \theta \d \phi \right) \left( \d r + a \sin^2 \theta \d \phi \right) \\
			- \left( 1 - \frac{2 M r}{r^2 + a^2 \cos^2 \theta} \right) \left( \d u + a \sin^2 \theta \d \phi \right)^2 \, .
		\end{multline*}
		The $1$-form
		\begin{align*}
			\kappa & = \d u + a \sin^2 \theta \d \phi \, ,
		\end{align*}
		annihilates the orthogonal complement of an optical structures $K$ with expanding twisting non-shearing congruences of null geodesics -- it is generated by the null vector field $k = g^{-1} (\kappa,\cdot) = \frac{\partial}{\partial r}$. There is a second optical structure with expanding twisting non-shearing congruences of null geodesics defined by the $1$-form
		\begin{align*}
			\lambda & = \d r + a \sin^2 \theta \d \phi  - \frac{1}{2} \left( 1 - \frac{2 M r}{r^2 + a^2 \cos^2 \theta} \right) \kappa \, .
		\end{align*}
		
		More generally, the \emph{Pleba\'{n}ski-Demia\'{n}ski metric} \cite{Plebanski1976} is a solution to the Einstein-Maxwell equations depending on seven parameters, and which contains the Kerr metric as a limiting case. It also admits two optical structures with expanding twisting non-shearing congruences of null geodesics.
	\end{exa}

	\begin{exa}[The Myers-Perry metric in dimension higher than four]
		The Kerr solution was generalised to higher dimensions by Myers and Perry \cite{Myers1986}. In dimension $2m$, it describes a rotating black hole with mass $M$ and $m$ rotation parameters $a_\alpha$. In local coordinates $(r,u,\mu_0, \mu_\alpha, \phi_\alpha)$,  where $\mu_0^2 + \sum_{\alpha =1}^m \mu_\alpha^2 = 1$, the metric takes the form
		\begin{multline*}
			g = - ( \d u )^2 + 2 \left( \d u + \sum_{\alpha=1}^m a_\alpha \mu_\alpha^2 \d \phi_\alpha \right) \d r  \\
			+ r^2 (\d \mu_0)^2 + \sum_{\alpha=1}^m \left(r^2 + a_\alpha^2\right) \left( ( \d \mu_\alpha )^2  + \mu_\alpha^2 (\d \phi_\alpha)^2 \right) 
			+ \frac{M r^2}{P F} \left( \d u + \sum_{\alpha=1}^m a_\alpha \mu_\alpha^2 \d \phi_\alpha \right)^2 \, ,
		\end{multline*}
		where
		\begin{align*}
			F & = 1 - \sum_{\alpha=1}^m \frac{a_\alpha^2 \mu_\alpha^2}{r^2 + a_\alpha^2} \, , &
			P & = \prod_{\alpha=1}^m (r^2 + a_\alpha^2).
		\end{align*}
		The $1$-form
		\begin{align*}
			\kappa & = \d u + \sum_{\alpha=1}^m a_\alpha \mu_\alpha^2 \d \phi_\alpha 
		\end{align*}
		annihilates the orthogonal complement of an optical structures $K$ with expanding twisting congruences of null geodesics that is shearing when $n>2$. It is also maximally twisting as can be seen from
		\begin{align*}
			\d \kappa & = 2 \sum_{\alpha=1}^m a_\alpha \mu_\alpha \d \mu_\alpha \wedge \d \phi_\alpha\, ,
		\end{align*}
		which is non-zero whenever $\mu_\alpha$ is non-vanishing for any $\alpha = 1, \ldots , m$, i.e., $\mc{K}$ is maximally twisting.
		
		More generally, the \emph{Kerr-NUT-(A)dS metric} presented in \cite{Chen2006} is an Einstein metric in dimension $2m+2$ depending on a cosmological constant, mass parameter, $2m-1$ NUT parameters, and $m$ rotation parameters. It also admits two expanding maximally twisting congruences of null geodesics. These are shearing for all $m>1$.
		
		Similar results hold in odd dimensions.
	\end{exa}
	
	\subsubsection{Twisting non-shearing congruences}
	Under the non-shearing assumption, one can obtain stronger results. In particular, one can \emph{single out} a generator of the congruence by normalising its twist. This should be contrasted with the situation in Hermitian geometry, where the hermitian form has a fixed norm.
	
	\begin{prop}\label{prop-nor-twist}
		Let $(\mc{M},g,K)$ be an optical geometry with twisting non-expanding non-shearing congruence of null geodesics $\mc{K}$. Then there exists a unique generator $k$ of $\mc{K}$ such that the geodesics are  affinely parametrised with respect to $k$ and its twist $\tau$ satisfies $\tau_{i j} \tau^{i j} = 2 r$,  where $r$ is the rank of $\tau$.
	\end{prop}
	\begin{proof}
		Let $k$ be a section of $K$ generating affinely parameterised geodesics with twist $\tau_{i j}$. Then we can write
		\begin{align*}
			( \d \kappa)_{a b} & = \tau_{a b} + 2 \kappa_{[a} \beta_{b]} \, ,
		\end{align*}
		where $\tau_{a b} = \tau_{i j} \delta^i_a \delta^j_b$ and $k^b \beta_b =0$. In particular, $\| \d \kappa \| = \| \tau \|$. Set $\wt{k}^a = \sqrt{2r} \| \tau \|^{-1} k^a$. Then, with $\wt{\kappa} = g(\wt{k},\cdot)$, we have
		\begin{align*}
			\d \wt{\kappa} &=  -\sqrt{2r} \| \tau \|^{-2} ( \d \| \tau \| ) \wedge \kappa + \sqrt{2r} \| \tau \|^{-1} \d \kappa \, ,
		\end{align*}
		so that
		\begin{align*}
			\| \wt{\tau} \|^2 & =  \| \d \wt{\kappa} \|^2 = 2r \| \tau \|^{-2} \| \d \kappa \|^2 = 2r \, .
		\end{align*}
		Since $\mc{K}$ is non-expanding and non-shearing, we have $\mathsterling_k \| \tau \| = 0$. This means that $\wt{k}$ generates affinely parameterised geodesics.
		
		Finally, it is straightforward to check that any other generator of $\mc{K}$ satisfying these properties must be either $\wt{k}$ itself, or $-\wt{k}$. Since we assume that the congruence is oriented, we obtain uniqueness.
	\end{proof}
	
	\vspace{2.5mm}

	Combining with Proposition \ref{prop-adapted_frame_max_tw} and Proposition \ref{prop-nor-twist} yields
	\begin{prop}\label{prop-adapted_frame_max_tw-nsh-nexp}
		Let $(\mc{M}, g, K)$ be a $(2m+2)$-dimensional optical geometry with maximally twisting non-expanding non-shearing congruence of null geodesics $\mc{K}$. Then there exists a generator $k$ of $\mc{K}$ and a null vector field $\ell$ such that $g(k, \ell) = 1$ and  $\kappa = g( k , \cdot)$ satisfies
		\begin{align*}
			\d \kappa (k , \cdot) & = 0 \, , & \d \kappa (\ell , \cdot) & = 0 \, ,
		\end{align*}
		and the twist $\tau$ of $k$ satisfies $\tau_{i j} \tau^{i j} = 2m$. The pair $(k^a , \ell^a)$ is unique.
	\end{prop}

	The twist $\breve{\tau}$ of a congruence of null geodesics $\mc{K}$ also induces a bundle endomorphism of $H_K$ by composing $\breve{\tau}$ with the inverse metric $h^{-1}$ on $H_K$.
	
	\begin{defn}
		Let $(\mc{M}, g, K)$ be an optical geometry with congruence of null geodesics $\mc{K}$. The \emph{twist endomorphism} of $\mc{K}$ is the section $\breve{F}$ of $\mathrm{End}(H_K)(1)$ defined by
		\begin{align}\label{eq-twist-endo}
			\breve{F} & := h^{-1} \circ \breve{\tau} \, ,
		\end{align}
		where $h$ is the bundle metric on $H_K$ and $\breve{\tau}$ the twist of $\mc{K}$. Its trivialisation by some generator $k$ of $\mc{K}$ will be called the \emph{twist endomorphism associated to $k$}.
	\end{defn}
	Note that if $\kappa$ has rank $d$, its associated bundle endomorphism $F$ has matrix rank $2d$.

	\begin{prop}\label{prop-twist-endo}
		Let $(\mc{M},g,K)$ be an optical geometry with non-expanding non-shearing congruence of null geodesics $\mc{K}$ with leaf space $(\ul{\mc{M}}, \ul{H})$. Let $k$ be a generator of affinely parametrised geodesics of $\mc{K}$. Then the twist endomorphism $F$ associated to $k$ descends to an endomorphism of $\ul{H}$ on $\ul{\mc{M}}$.
	\end{prop}
	
	\begin{proof}
		This follows immediately from Propositions \ref{prop-non-shearing}, Remark \ref{rem-twist-reductions} and the definition of twist endomorphism.
	\end{proof}

	A \emph{contact sub-Riemannian structure} on a $(2m+1)$-dimensional smooth manifold $\ul{\mc{M}}$ consists of a contact distribution $\ul{H}$ equipped with a bundle metric $\ul{h}$.
	
	Now, putting Propositions \ref{proposition-maxTw-contact} and \ref{proposition-SF->intconfH} together proves:
	\begin{cor}\label{cor:sub-Riem}
		Let $(\mc{M},g,K)$ be a $(2m+2)$-dimensional optical geometry with congruence of null geodesics $\mc{K}$. The following two statements are equivalent:
		\begin{enumerate}
			\item $\mc{K}$ is maximally twisting non-shearing.
			\item The leaf space $(\ul{\mc{M}},\ul{H},\ul{h})$ is a contact sub-Riemannian manifold.
		\end{enumerate}
	\end{cor}
	Analogous results were proved in \cite{Alekseevsky2018,Alekseevsky2021}.

	\begin{exa}[The (A)dS-Taub-NUT metric \cite{Taub1951,Newman1963,Awad2002,Alekseevsky2021,TaghaviChabert2022}]
		Let $\ul{\mc{M}}$ be a circle bundle over a $2m$-dimensional K\"{a}hler-Einstein manifold with metric $\ul{h}$ and K\"{a}hler form $\ul{\omega}$ and non-zero Ricci scalar $4m  \underline{\Lambda}$. Choose a local $1$-form $\ul{A}$ such that $\d \ul{A} = \ul{\omega}$. Denote by $t$ the fiber coordinate of $\ul{\mc{M}}$, and let $\ul{\alpha} = \d t  + \ul{A}$. The \emph{(A)dS-Taub-NUT metric} is the Einstein metric with Ricci scalar $2(m+1)\Lambda$ defined on the radial extension $\R^+ \times \ul{\mc{M}}$ that is given by
		\begin{align*}
			g & = - F(r) (\ul{\alpha})^2  + F(r)^{-1} (\d r)^2  + \ul{h}, 
		\end{align*}
		where $F(r)$ is a smooth function that depends on $\Lambda$, $\underline{\Lambda}$ and a third constant $M$, and satisfies the differential equation
		\begin{align*}
			\frac{\d}{\d r}\left( \frac{(r^2 +  \ul{\Lambda}^2)^{m}}{r} F(r)\right)
			& =  \frac{(r^2 + \ul{\Lambda}^2)^{m}}{r^2}  \ul{\Lambda} -  \frac{(r^2 +  \ul{\Lambda}^2)^{m+1}}{r^2}  \frac{\Lambda}{\ul{\Lambda}^2} \, .
		\end{align*}
		Here, the `mass' parameter $M$ arises as the constant of integration.
		
		The $1$-form
		\begin{align*}
			\kappa & = \ul{\alpha} + F(r)^{-1} \d r \, ,
		\end{align*}
		annihilates the orthogonal complement of an optical structure $K$ with expanding twisting non-shearing congruence of null geodesics. In fact, the congruence is maximally twisting since $\d \kappa = \ul{\omega}$.
		
		A second maximally twisting non-shearing congruence of null geodesics can be seen by considering the $1$-form $\lambda = - \ul{\alpha} + F(r)^{-1} \d r$.
	\end{exa}

	\section{Conformal optical geometry}\label{sec:conf-opt-str}
	
	\subsection{Conformal optical structures}
	Most of the geometric properties of optical structures turn out to be conformally invariant, and for this reason, we extend their definitions to the conformal setting. Here, we follow the treatment of \cite{Bailey1994}. Recall that a \emph{conformal manifold} consists of a pair $(\mc{M}, \mbf{c})$,  where $\mc{M}$ is an $(n+2)$-dimensional smooth manifold and $\mbf{c}$ a conformal structure on $\mc{M}$, that is, an equivalence class of metrics on $\mc{M}$, every pair of which differ by a conformal factor, i.e., two metrics $g$ and $\widehat{g}$ in $\mbf{c}$ are related via
	\begin{align}\label{eq-conf-res}
		\widehat{g} & = \e^{2 \, \varphi} g \, , & \mbox{for some smooth function $\varphi$ on $\mc{M}$.}
	\end{align}
	The respective Levi-Civita connections $\nabla$ and $\hat{\nabla}$ of $g$ and $\widehat{g}$ are then related by
	\begin{align}\label{eq-LC-transf}
		\begin{aligned}
			\wh{\nabla}_a v^b & = \nabla_a v^b + \Upsilon_a v^b - v_a \Upsilon^b + \delta_a^b \Upsilon_c v^c \, , & \mbox{for any $v^a \in \Gamma( T \mc{M})$,} \\
			\wh{\nabla}_a \alpha_b & = \nabla_a \alpha_b - \Upsilon_a \alpha_b - \Upsilon_b \alpha_a + g_{a b} \Upsilon^c \alpha_c \, , & \mbox{for any $\alpha_a \in \Gamma( T^* \mc{M})$.} 
		\end{aligned}
	\end{align}
For each $w \in \mbf{R}$, there are associated density bundles denoted by $\mc{E}[w]$, and sections thereof are densities of \emph{conformal weight $w$}. The conformal structure $\mbf{c}$ can equivalently be encoded in terms of a global non-degenerate section  $\bm{g}$ of $\odot^2 T^* \mc{M} \otimes \mc{E}[2]$ of Lorentzian signature, referred to as the \emph{conformal metric}. The \emph{bundle of conformal scales} is a choice of ray subbundle $\mc{E}_+[1]$ of $\mc{E}[1]$: sections thereof are in one-to-one correspondence with metrics in $\mbf{c}$: any section $s$ of $\mc{E}_+[1]$ defines a metric in $\mbf{c}$ by $g = s^{-2} \bm{g}$. The Levi-Civita connection extends to a linear connection on any of $\mc{E}[w]$. One can check that, in our previous notation,
	\begin{align*}
		\wh{\nabla}_a s & = \nabla_a s + w \Upsilon_a s \, .
	\end{align*}
	We shall denote the covariant exterior derivative $\d^{\nabla}$, i.e. for any (weighted) $p$-form $\alpha$,
	\begin{align*}
		(\d^{\nabla} \alpha)_{a_0 a_1 \ldots a_p} & := \nabla_{[a_0} \alpha_{a_1 \ldots a_p]} \, .
	\end{align*}
	We shall use $\bm{g}$ to identify sections of $T \mc{M}$ with those of $T^* \mc{M} \otimes \mc{E}[2]$. The orientation on $\mc{M}$ yields a global section $\bm{\varepsilon}$ of $\bigwedge^{n+2} T^*\mc{M} \otimes \mc{E} [n+2]$, the \emph{weighted volume form} with the property that for any $s \in \mc{E}_+[1]$, $\varepsilon = s^{-n-2} \bm{\varepsilon}$ is the volume form of the metric $g = s^{-2} \bm{g}$.

	\begin{defn}
		Let $(\mc{M}, \mbf{c})$ be an oriented and time-oriented Lorentzian conformal manifold.  An \emph{optical structure} on $(\mc{M},\mbf{c})$ is given by a vector distribution $K\subset T \mc{M}$ of tangent null lines. We refer to $(\mc{M},\mbf{c},K)$ as a \emph{conformal optical geometry}.
	\end{defn}
	It is clear that such a structure enjoys the same basic properties of its Lorentzian counterpart such as the filtration \eqref{eq-K-filt} and exact sequences \eqref{eq-seseq-K}, \eqref{eq-seseq-AnnK} and \eqref{eq-seseq-TM}. The only difference is that now the conformal structure $\mbf{c}$ induces a bundle conformal structure $\mbf{c}_K$ on the screen bundle $H_K$. In particular, $\bm{g}_{a b}$ yields a conformal bundle metric $\bm{h}_{i j} \in \Gamma( \odot^2 H^*_K [2])$.
	
	Now, let $k \in \Gamma(K)$ and set $\bm{\kappa} = \bm{g}(k,\cdot)$. Under a change of metrics in $\mbf{c}$, we have
	\begin{align*}
		\wh{\nabla}_a \bm{\kappa}_b & = \nabla_a \bm{\kappa}_b + 2 \Upsilon_{[a} \bm{\kappa}_{b]}+  \bm{g}_{a b} \Upsilon^c \bm{\kappa}_c \, ,
	\end{align*}
	where $\nabla_a$ and $\wh{\nabla}_a$ denote the respective Levi-Civita connections of $g$ and $\wh{g}$.
	Further, the Lie derivative of $\bm{\kappa}_a$ is found to be
	\begin{align}\label{eq-Lie-der-kappa}
		\mathsterling_k \bm{\kappa}_a & =  k^b \nabla_b \bm{\kappa}_a - \frac{2}{n+2} \bm{\kappa}_a \nabla_b k^b 
	\end{align}
	Note that this depends on the choice of generator $k$, since for any other generator $\wt{k} = \e^\varphi k$, the weighted $1$-form $\wt{\bm{\kappa}} = \bm{g}(\wt{k},\cdot)$ satisfies
	\begin{align*}
		\mathsterling_{\wt{k}} \wt{\bm{\kappa}} & =  \e^{2 \varphi} \left( \mathsterling_k \bm{\kappa} + \frac{n}{n+2} \bm{\kappa} \right) \, .
	\end{align*}

	\begin{lem}
		We have, for any $k \in \Gamma(K)$,
		\begin{align*}
			\mathsterling_k (k\hook \bm{\varepsilon}) & = 0 \, .
		\end{align*}
	\end{lem}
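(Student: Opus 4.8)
\emph{Proof proposal.} The plan is to reduce the statement to the single fact that the weighted volume form is itself Lie-invariant along $k$, i.e.\ that $\mathsterling_k \bm{\varepsilon} = 0$. Indeed, by the interior-product form of Cartan's identity we have $\mathsterling_k ( k \hook \bm{\varepsilon} ) = [k,k] \hook \bm{\varepsilon} + k \hook \mathsterling_k \bm{\varepsilon} = k \hook \mathsterling_k \bm{\varepsilon}$, since $[k,k] = 0$; this identity is insensitive to the conformal weight carried by $\bm{\varepsilon}$, because $\mathsterling_k$ is a derivation compatible with the density factor while $\hook$ acts only on the form factor. Hence it suffices to show $\mathsterling_k \bm{\varepsilon} = 0$, and — as will be clear from the computation — this holds for an arbitrary vector field $k$, with no use of the nullness of $K$.

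To compute $\mathsterling_k \bm{\varepsilon}$ I would fix a metric $g \in \mbf{c}$ with scale $s \in \Gamma(\mc{E}[1])$, volume form $\varepsilon$, and Levi-Civita connection $\nabla$, and write $\bm{\varepsilon} = s^{n+2} \varepsilon$ as in the definition of the weighted volume form, so that $\mathsterling_k \bm{\varepsilon} = (\mathsterling_k s^{n+2})\, \varepsilon + s^{n+2}\, \mathsterling_k \varepsilon$. The second term is classical, $\mathsterling_k \varepsilon = (\dv k)\, \varepsilon$. For the first I use the natural Lie derivative of a conformal density of weight $w$, namely $\mathsterling_k \tau = k^b \nabla_b \tau - \frac{w}{n+2} (\dv k)\, \tau$; this is the same convention that reproduces the established formula \eqref{eq-Lie-der-kappa} once one observes that the term $\bm{\kappa}_b \nabla_a k^b$ occurring there vanishes by nullness of $k$. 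Since $\nabla s = 0$ in its own scale and $s^{n+2}$ has weight $n+2$, this yields $\mathsterling_k s^{n+2} = -(\dv k)\, s^{n+2}$. Adding the two contributions, the divergence terms cancel exactly and $\mathsterling_k \bm{\varepsilon} = 0$, which together with the first paragraph gives $\mathsterling_k ( k \hook \bm{\varepsilon} ) = 0$.

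More invariantly, one can observe that $\bm{\varepsilon}$ is parallel for every Levi-Civita connection in $\mbf{c}$ and is the canonical, scale-independent section of $\Lambda^{n+2} T^* \mc{M} \otimes \mc{E}[n+2]$, a bundle that is naturally trivial since $\mc{E}[n+2] \cong (\Lambda^{n+2} T^* \mc{M})^{-1}$. The natural Lie-derivative formula for a weight-$(n+2)$ top-form then splits into a $+(\dv k)$ contribution from the $n+2$ antisymmetric covariant indices and a $-(\dv k)$ contribution from the density weight, and these cancel regardless of $k$.

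The only genuine subtlety — and the step I would verify most carefully — is the sign and normalisation of the weight convention for $\mathsterling_k$ on $\mc{E}[w]$, which must be pinned down precisely so that the form contribution and the density contribution to $\mathsterling_k \bm{\varepsilon}$ cancel exactly rather than leaving a nonzero multiple of $\dv k$; checking it against \eqref{eq-Lie-der-kappa} is the cleanest guarantee. It is worth emphasising that, in contrast to the unweighted identity $\mathsterling_k ( k \hook \varepsilon ) = (\dv k)(k \hook \varepsilon)$ recorded earlier, the weighting by $\mc{E}[n+2]$ is exactly what absorbs the expansion term and forces the right-hand side to vanish, so that the hypothesis $k \in \Gamma(K)$ is in fact not needed.
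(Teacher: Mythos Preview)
Your argument is correct and is, if anything, more transparent than the paper's. The paper proceeds by writing out $\mathsterling_k (k^b \bm{\varepsilon}_{b a_1 \ldots a_{n+1}})$ as a sum of three covariant-derivative terms (the last of which is precisely the weight-$n+2$ correction you isolate), then asserts that the result must be a multiple of $k \hook \bm{\varepsilon}$ and extracts that multiple by contracting with $\bm{\varepsilon}^{d a_1 \ldots a_{n+1}}$, finding it to be zero. Your route makes explicit the reason behind the proportionality claim --- namely the Cartan-type identity $\mathsterling_k \circ \iota_k = \iota_k \circ \mathsterling_k$ --- and then bypasses the contraction step entirely by showing directly that $\mathsterling_k \bm{\varepsilon} = 0$, the density weight $n+2$ contributing exactly the $-(\dv k)$ needed to cancel the $+(\dv k)$ from the form part. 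This is the same mechanism the paper's computation is detecting, just seen from a higher vantage point: your version exposes that the vanishing is really a statement about the canonical triviality of $\Lambda^{n+2}T^*\mc{M} \otimes \mc{E}[n+2]$, and in particular that the nullness of $k$ (indeed, the optical structure itself) plays no role. That last observation is a genuine sharpening of what the paper records.
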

	
	\begin{proof}This follows from the formula
		\begin{multline*}
			\mathsterling_k k^b \bm{\varepsilon}_{b a_1 \ldots a_{n+1}} =  (k^b \nabla_b k^c )\bm{\varepsilon}_{c a_1 \ldots a_{n+1}} \\ + (-1)^n (n+1)k^c \bm{\varepsilon}_{c b [a_1 \ldots a_{n}} \nabla_{a_{n+1}]} k^b 
			- k^c \bm{\varepsilon}_{c a_1 \ldots a_{n+1}} \nabla_b k^b \, .
		\end{multline*}
		We know that this expression must be proportional to $k^b \bm{\varepsilon}_{b a_1 \ldots a_{n+1}}$. Contracting with $\bm{\varepsilon}^{d a_1 \ldots a_{n+1}}$ yields the factor, which turns out to be equal to zero.
	\end{proof}

	We will also examine the geometric properties of a conformal optical structure in relation to the leaf space of its associated congruence $\mc{K}$ of null curves.
	
	Any pair of metrics $g$ and $\wh{g}$ in $\mbf{c}$ single out optical geometries $(\mc{M},g,K)$ and $(\mc{M},\wh{g},K)$, and the pertinent question is how the intrinsic torsions $\mathring{T}$ and $\wh{\mathring{T}}$ of the respective geometries relate to each other. Computing the optical invariants, we find
	\begin{align*}
		\wh{\breve{\gamma}}_{i} & = \e^{2\varphi} \breve{\gamma}_i   \, ,
	\end{align*}
where $\breve{\gamma}_i$ and $\wh{\breve{\gamma}}_{i}$ are the sections of $H_K^*(2)$ with respect to $g$ and $\wh{g}$ that are obstructions to $\mc{K}$ being geodesic. In particular, we have a well-defined section $\breve{\bm{\gamma}}_i$ of $H_K^*(2)[2]$ that is the obstruction to the null curves of $\mc{K}$ being geodesics, a property well-known to be conformally invariant.

	With reference to \eqref{eq-Lie-der-kappa}, we obtain:
	\begin{lem}
		Let $(\mc{M}, \mbf{c} ,K)$ be a conformal optical geometry with congruence of null curves $\mc{K}$. Then the curves are $\mc{K}$ are geodesics if and only if 
		\begin{align*}
			\bm{\kappa} \wedge \mathsterling_k \bm{\kappa}  & = 0 \, .
		\end{align*}
	\end{lem}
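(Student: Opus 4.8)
The plan is to reduce the weighted condition $\mathsterling_k\bm\kappa\wedge\bm\kappa=0$ to the geodesy criterion \eqref{eq-null-geod}, using the explicit formula \eqref{eq-Lie-der-kappa} for $\mathsterling_k\bm\kappa$. First I would fix a representative metric $g\in\mathbf{c}$ with Levi-Civita connection $\nabla$ and a generator $k\in\Gamma(K)$, and set $\bm\kappa=\bm{g}(k,\cdot)$. Because the conformal metric is parallel for $\nabla$ (extended to the density bundles), one has $\nabla_b\bm{g}_{ac}=0$, whence $k^b\nabla_b\bm\kappa_a=\bm{g}_{ac}\,k^b\nabla_b k^c=(\nabla_k k)_a$, the $\bm{g}$-dual of the acceleration $\nabla_k k$. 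Substituting into \eqref{eq-Lie-der-kappa} should give
\begin{align*}
\mathsterling_k\bm\kappa_a=(\nabla_k k)_a-\tfrac{2}{n+2}\,(\nabla_b k^b)\,\bm\kappa_a.
\end{align*}

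Next I would wedge this with $\bm\kappa$. The second term on the right is a multiple of $\bm\kappa_a$, so it wedges to zero against $\bm\kappa$, leaving
\begin{align*}
\mathsterling_k\bm\kappa\wedge\bm\kappa=(\nabla_k k)^\flat\wedge\bm\kappa,
\end{align*}
where $(\nabla_k k)^\flat=\bm{g}(\nabla_k k,\cdot)$. Since $K$ is a line distribution, $\bm\kappa$ is nowhere zero, and the wedge of a $1$-form $\alpha$ with $\bm\kappa$ vanishes exactly when $\alpha$ is pointwise proportional to $\bm\kappa$. Hence $\mathsterling_k\bm\kappa\wedge\bm\kappa=0$ holds if and only if $(\nabla_k k)^\flat=f\,\bm\kappa$ for some function $f$, equivalently $\nabla_k k=f\,k$ by non-degeneracy of $\bm{g}$. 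This is precisely \eqref{eq-null-geod}, which by Lemma \ref{lem-proposition-geod} is equivalent to the curves of $\mc{K}$ being geodesics.

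The displayed identity also shows that $\mathsterling_k\bm\kappa\wedge\bm\kappa$ depends on $k$ only through $\nabla_k k$ modulo $K$, so the vanishing condition is automatically independent of the chosen generator, consistent with the conformal invariance of geodesy. The only points demanding care are the parallelism $\nabla\bm{g}=0$ for the conformal metric (standard, cf.\ \cite{Bailey1994}) and the clean cancellation of the expansion term $\tfrac{2}{n+2}(\nabla_b k^b)\bm\kappa_a$ under the wedge; I expect no genuine difficulty beyond tracking the conformal weights correctly.
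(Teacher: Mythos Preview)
Your proof is correct and follows essentially the same route as the paper, which simply states the lemma ``with reference to \eqref{eq-Lie-der-kappa}'' and leaves the details implicit. You have filled in exactly those details: rewriting $k^b\nabla_b\bm\kappa_a$ as $(\nabla_k k)^\flat$ via $\nabla\bm g=0$, observing that the divergence term wedges to zero against $\bm\kappa$, and reducing the statement to $(\nabla_k k)^\flat\wedge\bm\kappa=0$, which is \eqref{eq-null-geod}.
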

	We note that the condition that the geodesics of $\mc{K}$ be affinely parametrised with respect to a generator $k$ is \emph{not} conformally invariant. We shall see however in Corollary \ref{cor-special} below that there exists a family of preferred generators of $\mc{K}$ for which $\mathsterling_k \bm{\kappa} = 0$.

	\subsection{Optical invariants}
	
	Now, if $(\mc{M},g,K)$ (and thus $(\mc{M},\wh{g},K)$) admits a congruence of null geodesics tangent to $K$, i.e., $\breve{\gamma}_i = 0$ (and thus $\wh{\breve{\gamma}}_i = 0$), we can compute the optical invariants of the respective optical geometries. We find that the twist and the shear of $\mc{K}$ transform conformally as
	\begin{align}\label{eq-conf-inv-tw-sh}
		\wh{\breve{\tau}}_{i j} & = \e^{2\varphi} \breve{\tau}_{i j}  \, , &
		\wh{\breve{\sigma}}_{i j} & = \e^{2\varphi} \breve{\sigma}_{i j} \, .
	\end{align}
	We can thus extend the definition of the shear and twist of a congruence of null geodesics in the context of a conformal optical geometry.
	\begin{defn}
		Let  $(\mc{M},\mbf{c},K)$ be a conformal optical geometry with congruence of null geodesics $\mc{K}$. Let $k$ be a generator of $\mc{K}$ and set $\bm{\kappa} = g(k,\cdot)$. Then the \emph{twist} and the \emph{shear} of $k$ are the respective sections $\bm{\tau} \in \Gamma(\bigwedge^2 H^*_K[2])$ and $\bm{\sigma} \in \Gamma( \odot^2_\circ H^*_K[2])$ defined by
		\begin{subequations}\label{eq-conf-TS}
			\begin{align}
				\bm{\tau} (v + K , w + K) & = \d^\nabla \bm{\kappa} (v,w) \, , & v,w & \in \Gamma(K^\perp) \, , \label{eq-conf-twist} \\
				\bm{\sigma} (v +K , w+K) \otimes \bm{\kappa} & = \frac{1}{2} \left( \mathsterling_{k} \bm{g} (v , w ) \otimes \bm{\kappa} - \bm{g} (v , w ) \otimes \mathsterling_{k} \bm{\kappa} \right) \, , & v, w & \in \Gamma(K^\perp) \, .\label{eq-conf-shear} 
			\end{align}
		\end{subequations}
The \emph{twist} and the \emph{shear of $\mc{K}$} are the respective sections $\breve{\bm{\tau}} \in \Gamma(\bigwedge^2 H^*_K (1)[2])$ and $\breve{\bm{\sigma}} \in \Gamma( \odot^2_\circ H^*_K (1)[2])$, whose trivialisations by some generator $k$ of $\mc{K}$ are given by $\bm{\tau}$ and $\bm{\sigma}$ above.
	\end{defn}
	As a consequence, we obtain the following conformal invariants of optical geometries.
	\begin{prop}
		Let $(\mc{M}, \mbf{c},K)$ be a conformal optical geometry. Let $g$ and $\wh{g}$ be two metrics in $\mbf{c}$ so that $(\mc{M}, g , K)$ and $(\mc{M}, \wh{g}, K)$ are optical geometries. Let $\mathring{T}$ and $\wh{\mathring{T}}$ be the intrinsic torsions of the respective optical geometries. Then
		\begin{align*}
			\mathring{T} & \in \Gamma(\mc{G}^{-1}) &  \Longleftrightarrow & & \wh{\mathring{T}} & \in \Gamma(\mc{\wh{G}}^{-1}) \, , \\
			\mathring{T} & \in \Gamma(\slashed{\mc{G}}_{-1}^1) &  \Longleftrightarrow & & \wh{\mathring{T}} & \in \Gamma(\wh{\slashed{\mc{G}}}_{-1}^1) \, , \\
			\mathring{T} & \in \Gamma(\slashed{\mc{G}}_{-1}^2) &  \Longleftrightarrow & & \wh{\mathring{T}} & \in \Gamma(\wh{\slashed{\mc{G}}}_{-1}^2) \, .
		\end{align*}
	\end{prop}
	
	\subsection{Non-expanding subclass of metrics}
	On the other hand, the expansion of some generator $k$ transforms in a non-conformally invariant way as
	\begin{align}\label{eq-trans-exp}
		\wh{\epsilon} & = \epsilon + n \Upsilon_{c} k^{c} \, .
	\end{align}
	This means that $\mc{K}$ has the same expansion with respect with both metrics $g$ and $\wh{g}$ provided these are conformally related by a factor constant along $K$. In addition, one can always use equation \eqref{eq-trans-exp} to find a metric $\wh{g}$ in $\mbf{c}$ for which the congruence is non-expanding. Indeed, the equation $\epsilon + n \Upsilon_{c} k^{c}=0$ is a first order ordinary differential equation, which always has solutions.
	\begin{prop}\label{prop-non-exp-conf}
		Let $(\mc{M}, \mbf{c} ,K)$ be a conformal optical geometry with congruence of null geodesics $\mc{K}$. Then locally, there is a subclass $\accentset{n.e.}{\mbf{c}}$ of metrics in $\mbf{c}$ with the property that whenever $g$ is in $\accentset{n.e.}{\mbf{c}}$, the congruence $\mc{K}$ is non-expanding, i.e., for any $k \in \Gamma(K)$ with $\kappa = g(k,\cdot)$, $\kappa \, \div  k -  \nabla_k \kappa = 0$.
		
		Any two metrics in $\accentset{n.e.}{\mbf{c}}$ differ by a factor constant along $K$.
		
		The conformal subclass $\accentset{n.e.}{\mbf{c}}$ induces a conformal subclass $\accentset{n.e.}{\mbf{c}}_K$ of $\mbf{c}_K$ on the screen bundle.
	\end{prop}
	
	In other words, any metric $g$ in the subclass $\accentset{n.e.}{\mbf{c}}$ defines an optical geometry $(\mc{M}, g, K)$ whose intrinsic torsion is a section of $\slashed{\mc{G}}_{-1}^0$.
	
	As corollary of Proposition \ref{prop-non-exp-conf}, we have:
	\begin{cor}\label{cor-special}
		Let $(\mc{M}, \mbf{c} ,K)$ be a conformal optical geometry with congruence of null geodesics $\mc{K}$.
		Then, locally, there exists a family of local generators $k \in \Gamma(K)$ such that
		\begin{align}\label{eq-special-gen}
			\mathsterling_k \bm{\kappa} & = 0 \, .
		\end{align}
		These generators have the property that they generate affinely parametrised geodesics tangent to $K$ for any choice of metric in $\accentset{n.e.}{\mbf{c}}$.
	\end{cor}
	
	\begin{defn}
		Let $(\mc{M},\mbf{c},K)$ be a conformal optical geometry. We say that a generator $k$ of $K$ is \emph{special} if 
		$\bm{\kappa}_a = k^b \bm{g}_{a b}$ satisfies \eqref{eq-special-gen}.
	\end{defn}

	\subsection{Class of metrics conformal to Walker metrics}
	Finally, assuming that $\mc{K}$ is non-expanding, non-twisting and non-shearing, we compute
	\begin{align}\label{eq-trans-par}
		\wh{E}_{i} & = \e^{2\varphi} \left( E_{i}  - \Upsilon_{i} \right) \, .
	\end{align}
	This means that $\mc{K}$ has the same obstruction to parallelism with respect with both metrics $g$ and $\wh{g}$ provided these are conformally related by a factor constant along $K^\perp$. In addition, using equations \eqref{eq-trans-exp} and \eqref{eq-trans-par}, we prove the following result.
	\begin{prop}\label{prop:conf-to-Walker}
		Let $(\mc{M}, \mbf{c} ,K)$ be a conformal optical geometry with non-twisting non-shearing congruence of null geodesics $\mc{K}$. Let $k \in \Gamma(K)$ and set $\bm{\kappa}_a = \bm{g}_{a b}k^b$. Suppose the Weyl tensor $W_{a b c d}$ satisfies
		\begin{align}\label{eq:cond-Walker}
			k^a W_{a b [c d} \bm{\kappa}_{e]} & = 0 \, .
		\end{align}
		Then locally, there is a subclass $\accentset{par.}{\mbf{c}}$ of metrics in $\mbf{c}$ with the property that whenever $g$ is in $\accentset{par.}{\mbf{c}}$, the line distribution $K$ is parallel, i.e., for any $k \in \Gamma(K)$ with $\bm{\kappa} = \bm{g}(k,\cdot)$, $\nabla_v \bm{\kappa} \wedge \bm{\kappa} = 0$.
		
		Any two metrics in $\accentset{par.}{\mbf{c}}$ differ by a factor constant along $K^\perp$.
	\end{prop}
	
	\begin{proof}
		With no loss, we can restrict ourselves to a metric $g$ in the subclass $\accentset{n.e.}{\mbf{c}}$, so that since $\mc{K}$ is non-twisting and non-shearing, the metric $g$ is a Kundt metric. We already know that the condition \eqref{eq:cond-Walker} is a necessary condition for the optical structure to be integrable. We now show that this is also sufficient locally. In particular, the condition \eqref{eq:cond-Walker} is equivalent to
		\begin{align*}
			k^a \delta^b_i k^c \delta^d_j W_{a b c d} = \left( k^a \delta^b_i \delta^c_j \delta^d_k W_{a b c d} \right)_\circ & = 0 \, , \\
			k^a \ell^b k^c \delta^d_j W_{a b c d} & = 0 \, , \\
			k^a \ell^b \delta^c_i \delta^d_j W_{a b c d} & = 0 \, .
		\end{align*}
		The first of these conditions is always satisfied for a non-twisting non-shearing congruence of null geodesics. The second one is equivalent to $\mathsterling_k E_i = 0$, and the third one is equivalent to $\ul{\nabla}_{[i} E_{j]}=0$, i.e., locally $E_i = (\d f)_i$ for some smooth function $f$ on the leave space of $\mc{K}$. Now, using \eqref{eq-trans-par}, we can find a metric for which $K$ is parallel.
	\end{proof}
	
	For related results on the conformal geometry, see \cite{leistner05a,Leistner2012}.

\subsection{Non-twisting congruences}\label{sec-non-twist-conf-optical}
This is a direct consequence of Proposition \ref{prop-non-twisting}:
\begin{prop}\label{prop-non-twist-conf-optical}
Let $(\mc{M}, \mbf{c},K)$ be a conformal optical geometry with congruence of null geodesics $\mc{K}$ with leaf space $(\ul{\mc{M}},\ul{H})$.  The following statements are equivalent:
\begin{enumerate}
	\item $\mc{K}$ is non-twisting;
	\item $\mc{M}$ is locally foliated by a one-parameter family of $(n+1)$-dimensional submanifolds, each containing an $n$-parameter family of null geodesics of $\mc{K}$;
	\item $\ul{H}$ is integrable, i.e., $\ul{\mc{M}}$ is locally foliated by $n$-dimensional submanifolds tangent to $\ul{H}$.
\end{enumerate}
\end{prop}

\subsection{Non-shearing congruence}\label{sec-non-shear-conf-optical}
This is a direct consequence of Propositions \ref{prop-non-shearing} and \ref{prop-non-exp-conf}:
\begin{prop}\label{proposition-shfr-Lie-conf}
Let $(\mc{M}, \mbf{c},K)$ be a conformal optical geometry with congruence of null geodesics $\mc{K}$. The following statements are equivalent.
\begin{enumerate}
	\item $\mc{K}$ is non-shearing.
	\item The induced conformal structure $\mbf{c}_K$ on the screen bundle $H_K$ is preserved along the geodesics of $\mc{K}$.
	\item The induced conformal structure $\mbf{c}_K$ on $H_K$ descends to a conformal structure $\ul{\mbf{c}}$ on $\ul{H}$. More precisely, there is a one-to-one correspondence between metrics in $\accentset{n.e.}{\mbf{c}}_K$ and metrics in $\ul{\mbf{c}}$.
\end{enumerate}
\end{prop}

\subsection{Non-twisting non-shearing spacetimes}\label{sec-non-twist-non-shear-conf-optical}
Now, combining Propositions \ref{prop-non-twist-conf-optical} and \ref{proposition-shfr-Lie-conf} yields:
\begin{prop}\label{proposition-SF->intconfH-conf}
Let $(\mc{M},\mbf{c},K)$ be a conformal optical geometry with congruence of null geodesics $\mc{K}$ with leaf space $(\ul{\mc{M}},\ul{H})$. Then the following statements are equivalent.
\begin{enumerate}
	\item $\mc{K}$ is non-twisting non-shearing.
	\item $\mc{K}$ is foliated by $(n+1)$-dimensional submanifolds tangent to $H$, each of which inherits a conformal structure from $\mbf{c}_K$.
	\item $\ul{\mc{M}}$ is foliated by $n$-dimensional submanifolds tangent to $\ul{H}$, each of which inherits a conformal structure $\ul{\mbf{c}}$ from $\mbf{c}_K$. Further, there is a one-to-one correspondence between  metrics in $\ul{\mbf{c}}$ and metrics in $\accentset{n.e.}{\mbf{c}}_K$.
\end{enumerate}
\end{prop}
In other words, a Robinson-Trautman spacetime is locally conformal to a Kundt spacetime.

\subsection{Twisting congruence of null geodesics}
We can extend the definition of the rank of the twist to the conformal setting:
\begin{defn}
The \emph{rank} of the twist of the congruence $\mc{K}$ is the rank of any section $\kappa \in \Gamma(\Ann(K^\perp))$.
\end{defn}

\subsubsection{Maximally twisting congruences}
From Proposition \ref{proposition-maxTw-contact}, we immediately obtain
\begin{prop}\label{proposition-maxTw-contact_conf}
Let $(\mc{M}, \mbf{c},K)$ be a $(2m+2)$-dimensional conformal optical geometry with maximally twisting congruence of null geodesics $\mc{K}$. Then the local leaf space $(\ul{\mc{M}},\ul{H})$ of $\mc{K}$ is equipped with a contact structure.

Further, every choice of special generator of $\mc{K}$ establishes a one-to-one correspondence between metrics in $\accentset{n.e.}{\mbf{c}}$ and contact forms on $\ul{\mc{M}}$.
\end{prop}

In odd dimensions, we have:
\begin{prop}
Let $(\mc{M},\mbf{c},K)$ be a $(2m+3)$-dimensional conformal optical geometry with maximally twisting congruence of null geodesics  $\mc{K}$ and leaf space $(\ul{\mc{M}}, \ul{H})$. Then,  for any non-vanishing section $\ul{\kappa}$ of $\Ann(\ul{H})$, the distribution $\ul{H}' :=\ker \d \underline{\kappa} \cap \ker \underline{\kappa}$ is a line subbundle of $\ul{H}$, and $\ul{H}/\ul{H}'$ descends to a contact distribution on the $(2m+1)$-dimensional leaf space of $\ul{H}'$.
\end{prop}

\subsubsection{Maximally twisting non-shearing congruences}

If $(\ul{\mc{M}},\ul{H})$ is a contact manifold, a \emph{sub-conformal contact structure} $\ul{\mbf{c}}$ on $\ul{\mc{M}}$ is a conformal structure  on $\ul{H}$. We refer to $(\ul{\mc{M}},\ul{H},\ul{\mbf{c}})$ as a \emph{sub-conformal contact manifold}. Any choice of metric in $\ul{\mbf{c}}$ defines a \emph{sub-Riemannian structure} on $\ul{H}$, and each choice of contact $1$-form in determines a metric in $\ul{\mbf{c}}$ -- see for instance \cite{Falbel2007,Falbel2014,Eastwood2016}.

The following result gives a very neat relation between choices of metrics in the conformal class and splittings of the optical structure.
\begin{prop}\label{prop-adapted_frame_max_tw-nsh-nexp_conf}
Let $(\mc{M}, \mbf{c}, K)$ be a  $(2m+2)$-dimensional conformal optical geometry with maximally twisting non-shearing congruence of null geodesics $\mc{K}$. Then for each $g$ in $\accentset{n.e.}{\mbf{c}}$, there exists a unique pair $(k, \ell)$,  where $k$ is a generator of $\mc{K}$ and $\ell$ a null vector field such that $g(k, \ell) = 1$, and  $\kappa = g( k , \cdot)$ satisfies
\begin{align*}
	\d \kappa (k , \cdot) & = 0 \, , & \d \kappa (\ell , \cdot) & = 0 \, ,
\end{align*}
and the twist of $k$ satisfies $\tau_{i j} \tau^{i j} = 2m$.

Further, the leaf space $(\ul{\mc{M}}, \ul{H})$ of $\mc{K}$ acquires a subconformal contact structure $\ul{\mbf{c}}$ whereby each metric $g$ in $\accentset{n.e.}{\mbf{c}}$ descends to a bundle metric in $\ul{\mbf{c}}$, and $\kappa = g(k, \cdot)$ descends to the corresponding contact $1$-form.

Let $(k, \ell)$ and $(\wh{k}, \wh{\ell})$ be any two such pairs corresponding to metrics $g$ and $\wh{g}$ in $\accentset{n.e.}{\mbf{c}}$, with $\wh{g} = \e^{2 \varphi} g$ for some smooth function $\varphi$ constant along $K$. Then, $\wh{k}^a = k^a$, and $\kappa = g(k,\cdot)$, $\wh{\kappa} = \wh{g}(\wh{k},\cdot)$, $\lambda = g(\ell,\cdot)$, $\wh{\lambda} = \wh{g}(\wh{\ell},\cdot)$, $\widehat{\delta}_a^i$ and $\delta_a^i$ are related via
\begin{align*}
	\widehat{\kappa}_a & =  \e^{2 \varphi} \kappa_a \, , \\
	\widehat{\lambda}_a & =  \lambda_a + h_{i j} (\tau^{-1})^{i k} \Upsilon_k \delta_a^j - \frac{1}{2} h_{i j} (\tau^{-1})^{i k} (\tau^{-1})^{j \ell} \Upsilon_k \Upsilon_\ell \kappa_a  \, , \\
	\widehat{\delta}_a^i & = \delta_a^i - (\tau^{-1})^{i j} \Upsilon_j \kappa_a \, ,
\end{align*}
where $\Upsilon_i := \delta^a_i \nabla_a \varphi$.
\end{prop}

\begin{proof}
The first part of the proposition is a direct consequence of Proposition \ref{prop-adapted_frame_max_tw-nsh-nexp}.

The geometric interpretation of the leaf space as a contact subconformal manifold follows directly from Proposition \ref{proposition-maxTw-contact}.

For the remainder, let $(k, \ell)$ and $(\wh{k}, \wh{\ell})$ be as given in the proposition. The relation between their respective twists are given by $\wh{\tau}_{i j} = \e^{2 \varphi} \tau_{i j}$. But we know $(\d^\nabla \kappa)_{a b} = \tau_{i j} \delta^i_a \delta^j_b$ and $(\d^{\wh{\nabla}} \wh{\kappa})_{a b} = \wh{\tau}_{i j} \wh{\delta}^i_a \wh{\delta}^j_b$. Using the relation between $\nabla$ and $\wh{\nabla}$, we find
\begin{align*}
	\wh{\tau}_{i j} \wh{\delta}^i_a \wh{\delta}^j_b = \e^{2 \varphi} \tau_{i j} \delta^i_a \delta^j_b + 2 \e^{2 \varphi} \Upsilon_i \delta^i_{[a} \kappa_{b]} \, ,
\end{align*}
from which we deduce the required change for $\delta^i_a$. The required change for $\lambda$ can be deduced by comparing the change of metrics, or by simply noting that $\wh{\ell}^a = \wh{g}^{a b} \wh{\lambda}_b$ should be null and annihilate $\wh{\delta}^i_a$.
\end{proof}
The above proposition is applied in \cite{TaghaviChabert2022} to great effect in determining Einstein metrics in the conformal class: such a metric is essentially determined by a $1$-form $\lambda$ as in the proposition, and a conformal factor that depends only on an affine parameter along the geodesics of $\mc{K}$.

Just as for metric optical structures, one can define the twist endomorphism of $H_K = K^\perp/K$ as in \eqref{eq-twist-endo} in the conformal setting as the section of $\mathrm{End}(H_K)(1)$ given by
\begin{align}\label{eq-twist-endo-conf}
F & := \bm{h}^{-1} \circ \bm{\tau} \, .
\end{align}
Note that $F$ does not depend on the choice of metric in $\mbf{c}$.

Mirroring Proposition \ref{prop-twist-endo}, we have:
\begin{prop}\label{prop-twist-endo-conf}
Let $(\mc{M},\mbf{c},K)$ be a conformal optical geometry with non-shearing congruence of null geodesics $\mc{K}$ with leaf space $(\ul{\mc{M}}, \ul{H})$. Let $k$ be any special generator $k$ of $\mc{K}$. Then the twist endomorphism $F$ associated to $k$ descends to an endomorphism of $\ul{H}$ on $\ul{\mc{M}}$.
\end{prop}

\begin{exa}[Fefferman's construction]
When the endomorphism $\ul{F}$ associated to the contact sub-conformal structure is a complex structure $\ul{J}$, then we refer to  $(\ul{\mc{M}},\ul{H},\ul{\mbf{c}})$ as a \emph{partially integrable almost CR manifold}. In the case when the eigenbundles of $\ul{J}$ are integrable, Fefferman showed that one can construct a conformal structure of Lorentzian signature on a circle bundle $\mc{M} \accentset{\varpi}{\rightarrow} \ul{\mc{M}}$ in a canonical way: for each contact form $\ul{\theta}$ and corresponding bundle metric $\ul{h}$ in $\ul{\mbf{c}}$, the metric on $\mc{M}$ is given by
\begin{align*}
	g & = 4 \varpi^* \ul{\theta}\, \lambda + \varpi^* \ul{h} \, .
\end{align*}
where $\lambda$ is a certain canonically defined $1$-form on $\mc{M}$ that does not vanish on restriction to the fibers of $\mc{M} \rightarrow \ul{\mc{M}}$.

This construction was generalised to the non-integrable case in \cite{Leitner2010} and to the case when $\ul{F}$ is not a complex structure, but has constant eigenvalues in \cite{Falbel2007,Falbel2014}. See also the recent work \cite{Alekseevsky2021} and \cite{Taghavi-Chabert2012}.
\end{exa}

\section{Four-dimensional case}\label{sec:four_dim}
The four-dimensional case is very special. We note that if $(\mc{M},\mbf{c},K)$ is a four-dimensional conformal optical geometry, there is a volume form $\bm{\varepsilon}$ of conformal weight $4$, which induces a skew-symmetric bilinear form $\bm{\varepsilon}_K$ of weight $2$ on the screen bundle $H_K = K^\perp/K$. This additional structure is particularly useful: it allows us to construct a bundle complex structure on $H_K$. The following proposition is immediate.
\begin{prop}\label{prop-ANSt-4}
Let $(\mc{M},\mbf{c},K)$ be a four-dimensional conformal optical geometry. Let $\bm{\varepsilon}_K$ the weighted volume form on the screen bundle $H_K = K^\perp/K$. Then the bundle endomorphism on $H_K$ defined
\begin{align}\label{eq-canon-J}
	{J} & := \bm{h}^{-1} \circ \bm{\varepsilon}_K \, ,
\end{align}
is a bundle complex structure on $H_K$ compatible with the conformal structure $\mbf{c}_K$ on $H_K$, and with eigenbundles $N/{}^\C K$ and $\overline{N}/{}^\C K$ where $N$ is a  totally null complex rank-$2$ distribution, $\overline{N}$ its complex conjugate, and $N \cap \overline{N} = {}^\C K$.

Conversely, a totally null complex rank-$2$ distribution $N$ defines an optical structure $K$ on $(\mc{M},\mbf{c})$, and thus a bundle complex structure on $H_K$ compatible with the conformal structure.
\end{prop}

\begin{rem}\label{rob-remark}
The pair $(N,K)$, or equivalently $(K,J)$, defines an \emph{almost Robinson structure} on $(\mc{M}, \mbf{c})$, or, with a choice of metric $g$, on $(\mc{M}, g)$, as first defined in \cite{Nurowski2002,Trautman2002,Trautman2002a}, and is the subject of \cite{Fino2023}. The above proposition tells us that in dimension four there is no distinction between (conformal) optical structures and almost Robinson structures.
\end{rem}

When it comes to the integrability of the almost Robinson structure, we have the following well-known theorem -- see  e.g.\  \cite{Robinson1985,Penrose1986} and the aforementioned references.
\begin{thm}\label{thm-shfr-ANSt-4}
Let $(\mc{M},\mbf{c},K)$ be an oriented four-dimensional conformal optical geometry with congruence of null curves $\mc{K}$. Let $J$ be the screen bundle endomorphism \eqref{eq-canon-J} and $N$ the associated complex totally null $2$-plane distribution defined in Proposition \ref{prop-ANSt-4}. Then the following three statements are equivalent:
\begin{enumerate}
	\item $N$ is involutive, i.e., closed under the bracket of vector fields.\label{item:N-inv}
	\item $J$ is preserved along the flow of any generator of $\mc{K}$.\label{item:J_pres}
	\item $\mc{K}$ is a non-shearing congruence of null geodesics.\label{item:non-sh}
\end{enumerate}
Further, if any of these conditions is satisfied, then the leaf space $(\ul{\mc{M}},\ul{H},\mbf{c}_{\ul{H}})$ is equipped with a CR structure, that is, $\ul{H}$ is equipped with a bundle complex structure.
\end{thm}

\begin{proof}
Since $N$ is totally null and of rank $2$, and ${}^\C K \subset N$, the condition that $N$ be involutive is that $g([k,v],w) =0$ for any sections $v$, $w$ of $N$ and $k$ of $K$, and metric $g$ in $\mbf{c}$. With no loss we can choose $v$ and $k$ such that $v \wedge k \neq 0$. But this is equivalent to $g(\mathsterling_k v ,w) =0$, i.e., the eigenbundle $N/{}^\C K$ of $J$ is preserved along the flow of $k$, and similarly for its complex conjugate. This proves the equivalence of \eqref{item:N-inv} and \eqref{item:J_pres}.

Now, for any section $k$ of $K$, $v, w$ sections of $N$ so that $g(v,w) = 0$ for any metric $g$ in $\mbf{c}$, we have, using the Leibniz rule,
\begin{align*}
	0 & = \mathsterling_k (g(v,w)) = (\mathsterling_k g) (v,w) + g([k, v],w) + g([k, w],v) \, . 
\end{align*}
Since $N$ has rank two, and ${}^\C K \subset N$, we have three possibilities:
\begin{align*}
	0 & =  (\mathsterling_k g) (k,k) \, , \\
	0 & =  (\mathsterling_k g) (v,k) + g([k, v],k) \, , \\
	0 & =  (\mathsterling_k g) (v,v) + 2 g([k, v],v) \, . 
\end{align*}
The first condition is vacuous, while the other two together with their complex conjugate tells us that $\mc{K}$ is non-shearing if and only if $N$ is involutive (together with $\overline{N}$.)

Thus, all conditions \eqref{item:N-inv}, \eqref{item:J_pres} and \eqref{item:non-sh} are equivalent.

For the last part, we note that since $J$ is preserved along the flow of $k$ together with its eigenbundles, it descends to a complex structure on $\ul{H}$ whose eigenbundles are necessarily involutive since they have rank one. Hence the leaf space $(\ul{\mc{M}},\ul{H},\mbf{c}_{\ul{H}})$ is equipped with a CR structure.
\end{proof}

\begin{rem}
	When the optical structure is tangent to a \emph{twisting} non-shearing congruence of null geodesics, then the underlying CR three-manifold is non-degenerate (i.e.\ contact). It was recently established that if, in addition, certain conditions are imposed on the Weyl tensor and Bach tensor, then $(\mc{M},\mbf{c},K)$ is locally conformally isometric to a `perturbation' of Fefferman's canonical conformal bundle by a semi-basic $1$-form --- see \cite{TaghaviChabert2023} for details.
\end{rem}

\section{Generalised optical geometries}\label{sec:gen_opt}

\subsection{Generalised optical structures}
Generalised optical geometries were first introduced by Robinson and Trautman in \cite{Trautman1984,Trautman1985,Robinson1985,Robinson1986,Robinson1989,Trautman1999}, where they are referred to simply as `optical geometries'. Two equivalent definitions are given there, and we shall give the one that generalises the notion of conformal optical geometries first. The alternative definition shall be considered later.
\begin{defn}\label{def:gen_opt_str}
Let $\mc{M}$ be a smooth manifold. A \emph{generalised optical structure} consists of a pair $(K,\mbf{o})$,  where $K$ is a line distribution on $\mc{M}$, and $\mbf{o}$ an equivalence class of Lorentzian metrics such that
\begin{enumerate}
	\item for each $g$ in $\mbf{o}$, $K$ is null with respect to $g$;
	\item any two metrics $g$ and $\wt{g}$ in $\mbf{o}$ are related by
	\begin{align}\label{eq:optical-metric}
		\wt{g} & = \e^{2 \varphi} \left( g + 2 \, \kappa \, \alpha \right) \, ,
	\end{align}
	for some smooth function $\varphi$ and $1$-form $\alpha$ on $\mc{M}$, and $\kappa = g(k ,\cdot)$ for any non-vanishing section $k$ of $K$.
\end{enumerate}
We refer to $(\mc{M},K,\mbf{o})$ as a \emph{generalised optical geometry}.
\end{defn}

\begin{rem}
\begin{enumerate}
	\item The two conditions above are well-defined: if $K$ is null with respect to a given metric $g$, then it is null with respect to any metric $\wt{g}$ related to $g$ via \eqref{eq:optical-metric}.
	\item Similarly, if $K^\perp$ is the orthogonal complement to $K$ with respect to one metric $g$ in $\mbf{o}$, then $K^\perp$ remains the orthogonal complement to $K$ for any other metric $g$ in $\mbf{o}$. So the notion of orthogonal complement $K^\perp$ with respect to $\mbf{o}$ is well-defined.
	\item The requirement that $\wt{g}$ in \eqref{eq:optical-metric} must be non-degenerate implies that $\alpha(k) \neq -1$.
	\item If a metric $g$ is in $\mbf{o}$, so is any metric in its conformal class $[g]$.
\end{enumerate}
\end{rem}

\begin{rem}
When $\alpha \wedge \kappa = 0$, the metric $\wt{g}$ given in \eqref{eq:optical-metric} reduces to 
\begin{align*}
	\wt{g} & = \e^{2 \varphi} \left( g + F \kappa^2 \right) \, ,
\end{align*}
for some smooth function $F$. We can then view $\wt{g}$ as an exact first-order perturbation of $g$ up to some overall factor. Its inverse is given by
\begin{align*}
	\wt{g}^{-1} = \e^{-2 \varphi} \left( g^{-1} - F k^2 \right) \, .
\end{align*}
If in addition $\varphi=0$, the metric $\wt{g}$ is known as a \emph{(generalised) Kerr-Schild metric}, and if $g$ is the Minkowski metric, simply as a \emph{Kerr-Schild metric}.
\end{rem}

A generalised optical structure $(\mc{M}, K, \mbf{o})$ has an associated congruence of null curves $\mc{K}$ tangent to $K$. The next proposition deals with the question of which geometric properties of the congruence are shared by all metrics in $\mbf{o}$.

\begin{prop}\label{prop:gen_tw-sh}
Let $(\mc{M}, K, \mbf{o})$ be a generalised optical geometry with congruence of curves $\mc{K}$. Then $K$ is null with respect to any metric in $\mbf{o}$. Further, if the curves of $\mc{K}$ are geodesics with respect to one metric in $\mbf{o}$, they are geodesics for any other. This being the case, the twist and the shear of $\mc{K}$ do not depend on the choice of metric in $\mbf{o}$.

In other words, for any metrics $g$ and $\wt{g}$ in $\mbf{o}$, the two conformal optical geometries $(\mc{M}, [g], K)$ and $(\mc{M}, [\wt{g}], K)$ share the same optical properties.
\end{prop}

\begin{proof}
Let $k$ be a generator of $\mc{K}$, $g$ and $\wt{g}$ two metrics in $\mbf{o}$. Then $\wt{\kappa} = \wt{g}(k,\cdot) = \e^{2\varphi} (1 + \alpha(k)) \kappa$. The result the follows from Lemma \ref{lem-proposition-geod} and the defining equations \eqref{eq-twist} and \eqref{eq-shear} for the twist and the shear, together with the fact that the Lie derivative and the exterior derivative do not depend on the metric.
\end{proof}

We can therefore talk of a congruence of null geodesics $\mc{K}$ for $(\mc{M}, K, \mbf{o})$ with given shear and twist. We also note that Robinson and Trautman gave an interesting characterisation of congruences of null geodesics in the context of generalised optical geometries in \cite{Robinson1986a} along the lines of the Robinson--Mariot theorem \cite{Mariot1954,Robinson1961}.

Proposition \ref{prop:gen_tw-sh} tells that two distinct (conformal) optical geometries may share the same congruence of null geodesics with the same twist and shear. This can be made more explicit by starting with an $(n+1)$-dimensional manifold $\ul{\mc{M}}$, which we extend to the trivial line bundle $\mc{M} = \R \times \ul{\mc{M}}$.
We shall construct a Lorentzian metric on $\mc{M}$ and identify $\ul{\mc{M}}$ as the leaf space of a congruence of null geodesics. Denote by $\varpi$ the natural projection from $\mc{M}$ to $\ul{\mc{M}}$, and the line distribution $K$ on $\mc{M}$ defined by $\ker \varpi_*$ is tangent to a congruence $\mc{K}$ of curves, which are none other than the fibers of the bundle $\mc{M}$.

We also assume that $\ul{\mc{M}}$ is endowed with a hyperplane distribution $\ul{H}$. Let $\ul{\kappa}$ be a $1$-form annihilating $\ul{H}$, and extend $\ul{\kappa}$ to a coframe $(\ul{\kappa}, \ul{\theta}^i)$ for $\ul{\mc{M}}$. Set $\kappa = \varpi^* \ul{\kappa}$ and $\theta^i = \varpi_* \ul{\theta}^i$. Now, choose a $1$-form  $\lambda$ such that $\lambda (k)$ does not vanish for any non-vanishing section $k$ of $K$, and a positive definite symmetric matrix $h_{i j}$ depending smoothly on $\mc{M}$. Then
\begin{align*}
g & = 2 \, \kappa \, \lambda + h_{i j} \theta^i \, \theta^j \, ,
\end{align*}
is a Lorentzian metric on $\mc{M}$ for which $K$ is null, and since for any section $k$ of $K$, $\mathsterling_k \kappa = 0$, the curves of $\mc{K}$ are geodesics. Hence $(\mc{M}, g, K)$ is an optical geometry.

The freedom in choosing $\lambda$, $h_{i j}$, together with the freedom in the choice of $\ul{\theta}^i$ leads to the equivalent class $\mbf{o}$ of metrics related via \eqref{eq:optical-metric}. Hence $\mc{M}$ is endowed with a generalised optical structure $(K,\mbf{o})$.

If the distribution $\ul{H}$ is involutive, i.e., $\ul{\kappa} \wedge \d \ul{\kappa} =0$, then, by the naturality of the exterior derivative, $\kappa \wedge \d \kappa =0$, i.e., $\mc{K}$ is non-twisting.

If the distribution $\ul{H}$ is equipped with a Riemannian structure $\ul{h}_{i j}$, then we can take $h_{i j} = \e^{2 \varphi} \ul{h}_{i j}$ for some function $\varphi$ on $\mc{M}$. In this case, $\mc{K}$ is a non-shearing, and if $\varphi$ is a function on $\ul{\mc{M}}$, $\mc{K}$ is also non-expanding.

\subsection{Generalised optical geometries as $G$-structures}
The original definition of a generalised optical structure of \cite{Trautman1984,Trautman1985,Robinson1985,Robinson1986,Robinson1989,Trautman1999}, is given in terms of a $G$-structure on a four-dimensional smooth manifold $\mc{M}$ where the structure group $H$ is the stabiliser of a line distribution $K^{(1)}$ and a rank-$3$ distribution $K^{(3)}$ such that $K^{(1)} \subset K^{(3)}$, and a complex structure on each fiber of $K^{(3)}/K^{(1)}$. In particular, $H$ is a subgroup of $\mbf{GL}(4,\R)$, \emph{not} $\SO(1,3)$. Noting that the complex structure on $K^{(3)}/K^{(1)}$ can be replaced by a conformal structure, this admits a straightforward generalisation to higher dimensions.

\begin{prop}\label{prop:gen-opt-G-str}
Let $\mc{M}$ be a smooth oriented $(n+2)$-dimensional manifold. Then the following statements are equivalent.
\begin{enumerate}
	\item $\mc{M}$ is endowed with a generalised optical structure $(K,\mbf{o})$.\label{item:gen_opt}
	\item $\mc{M}$ is endowed with a pair of distributions $K^{(1)}$ and $K^{(n+1)}$ of rank $1$ and $n+1$ respectively such that
	\begin{align}\label{eq:filt}
		K^{(1)} \subset K^{(n+1)}
	\end{align} and its associated screen bundle $K^{(n+1)}/K^{(1)}$ is equipped with a conformal structure of Riemannian signature.\label{item:gen_opt-G}
\end{enumerate}
\end{prop}

\begin{proof}
Assume \eqref{item:gen_opt}. With reference to Definition \ref{def:gen_opt_str}, set $K^{(1)} := K$. The notion of orthogonal complement $K^\perp$ of $K$ does not depend on the choice of metric in $\mbf{o}$, so we can set $K^{(n+1)} := K^\perp$. We have that $K^{(1)} \subset K^{(n+1)}$ and it is straightforward to check that $\mbf{o}$ induces a conformal structure of Riemannian signature on $K^{(n+1)}/K^{(1)}$: indeed, let $g$ and $\wt{g}$ be two metrics in $\mbf{o}$ related by \eqref{eq:optical-metric}, then, for any $v, w \in \Gamma(K^\perp)$, we have metrics
\begin{align*}
	h(v + K , w+ K) & = g(v,w) \, , & \wt{h}(v + K , w+ K) & = \wt{g}(v,w) \, , 
\end{align*}
on $K^{(n+1)}/K^{(1)}$ that are conformally related by $\wt{h} = \e^{2 \varphi} h$.

For the converse, assume \eqref{item:gen_opt-G}. Choose a metric $h$ in the conformal class on the screen bundle, together with vector fields $(e_i)_{i=1,\ldots,n}$ such that $(e_i + K^{(1)})_{i=1,\ldots,n}$ form an orthonormal frame on the screen bundle with respect to $h$, and set $h_{i j} = h(e_i , e_j)$. Let $(\kappa , \theta^i)_{i=1,\ldots, n}$ be a set of $1$-forms that spans $\Ann(K^{(1)})$ such that $\theta^{i}(e_j) = \delta_j^i$, and extend it to a coframe $(\kappa , \theta^i , \lambda)_{i=1,\ldots, n}$ on $\mc{M}$. Then
\begin{align*}
	g & = 2 \kappa \, \lambda + h_{i j} \theta^i \, \theta^j \, ,
\end{align*}
is a Lorentzian metric on $\mc{M}$ for which $K^{(1)}$ is null.
Now, the freedom in choosing the coframe is given by
\begin{align*}
	\wt{\kappa} & = a \kappa \, , & \wt{\theta}^i & = \phi_j{}^i \theta^j + \psi^i \kappa \, , & \wt{\lambda} & = b \lambda + c_i \theta^i + f \kappa \, ,
\end{align*}
where $a$, $\phi_j{}^i$, $\psi^i$, $b$, $c_i$ and $f$ are smooth functions on $\mc{M}$ with $a b \neq 0$, and $\phi_i{}^j$ preserves $h_{i j}$. We are also free to choose a different metric $\wt{h}_{i j} = \e^{2 \varphi} h_{i j}$ for some smooth function $\varphi$ on the screen bundle. Then the metric on $\mc{M}$ corresponding to the coframe $(\wt{\kappa} , \wt{\theta}^i , \wt{\lambda})_{i=1,\ldots, n}$ and screen bundle metric $\wt{h}_{i j}$ is given by
\begin{align*}
	\wt{g} & = 2 \wt{\kappa} \, \wt{\lambda} + \wt{h}_{i j} \wt{\theta}^i \, \wt{\theta}^j \, ,
\end{align*}
and is related to $g$ via \eqref{eq:optical-metric} where
\begin{align*}
	\alpha & = (a b \e^{-2 \varphi} - 1) \lambda + \phi_{i j} \psi^j \theta^i + \frac{1}{2} \psi_k \psi^k \kappa \, .
\end{align*}
Hence, the geometric structure \eqref{item:gen_opt-G} gives rise to a generalised optical structure.
\end{proof}

Thus, we can view a generalised optical structure on a smooth manifold $\mc{M}$ as a $G$-structure where the structure group $H$, say, of the frame bundle of $\mc{M}$ is reduced from $\mbf{SL}(n+2,\R)$ (or $\mbf{GL}(n+2, \R)$ if we drop the assumption that $\mc{M}$ is oriented) to the closed Lie subgroup $H$  that stabilises the distribution filtration \eqref{eq:filt} together with a conformal structure on its associated screen bundle. From the proof of Theorem \ref{prop:gen-opt-G-str}, we see that $H$ has dimension $\frac{1}{2}n(n+3)+4$. The generalised optical geometry $(\mc{M},K,\mbf{o})$ is integrable as a $G$-structure if and only if there exist local coordinates $(u, v, x^i)$ on $\mc{M}$ and a metric $g$ in $\mbf{o}$ and a section $k$ of $K$ such that
\begin{enumerate}
\item $g = 2 \d u \, \d v + \delta_{i j} \d x^i \, \d x^j$,  where $\delta_{i j}$ is the standard Euclidean metric on $\R^n$, i.e., $g$ is the Minkowski metric, and
\item $k = \frac{\partial}{\partial v}$.
\end{enumerate}

The following theorem generalises a result in dimension four given in \cite{Robinson1985} to higher dimensions.
\begin{thm}\label{thm:integrable_optical}
Let $(\mc{M},K,\mbf{o})$ be a generalised optical geometry with congruence of null curves $\mc{K}$. Then the following statements are equivalent:
\begin{enumerate}
	\item There exists a torsion-free linear connection compatible with $\mbf{o}$.\label{item:lin_conn}
	\item $\mc{K}$ is a non-twisting non-shearing congruence of null geodesics.\label{item:non-sh-non-sh-cong}
\end{enumerate}
Further, in dimension four ($n=2$), $(\mc{M},K,\mbf{o})$ is integrable as a $G$-structure if and only if any of the conditions \eqref{item:lin_conn} and \eqref{item:non-sh-non-sh-cong} holds.

In dimensions six and higher ($n>3$), in the neighbourhood of any point in $\mc{M}$, there exists smooth functions $u$ and $v$ such that
\begin{itemize}
	\item $\frac{\partial}{\partial v}$ spans $K$,
	\item $\d u$ annihilates $K^\perp$,
	\item $\mbf{o}$ contains the metric $g = 2 \d u \d v + \ul{h}$, where $\ul{h}$ is a family of conformally flat metrics smoothly parametrised by $u$.
\end{itemize}
if and only if any of the conditions \eqref{item:lin_conn} and \eqref{item:non-sh-non-sh-cong} holds together with the condition
\begin{align}\label{eq:W_cond}
	\left( \kappa_{[a} W_{b c] [d e} \kappa_{f]} \right)_\circ & = 0 \, ,
\end{align}
for any $1$-form $\kappa$ annihilating $K^\perp$, where $W_{a b c d}$ is the Weyl tensor of any metric in $\mbf{o}$.
\end{thm}

\begin{proof}
We first establish the equivalence of \eqref{item:lin_conn} and \eqref{item:non-sh-non-sh-cong} following the proof of \cite{Robinson1985}. Let $\nabla'$ be a torsion-free linear connection compatible with $\mbf{o}$ so that for any metric $g$ in $\mbf{o}$ and section $k$ of $K$, we have, with $\kappa = g(k, \cdot)$,
\begin{align*}
	\nabla'_a \kappa_b & = \alpha_a \kappa_b \, , \\
	\nabla'_a g_{b c} & = \beta_a g_{b c} + 2 \, \gamma_{a [b} \kappa_{c]}  \, ,
\end{align*}
for some tensor fields $\alpha_a$, $\beta_a$ and $\gamma_{a b}$. Then, since $\nabla$ is torsion-free,
\begin{align*}
	\kappa_{[a} (\d \kappa)_{b c]} & = \kappa_{[a} \nabla'_{b} \kappa_{c]} = 0 \, , \\
	\mathsterling_k g_{a b} & = k^c \nabla'_c g_{a b} + 2 \nabla'_{(a} \kappa_{b)} = (\beta_c k^c) g_{a b} + \left( k^c \gamma_{c (a} + \alpha_{(a} \right) \kappa_{b)} \, ,
\end{align*}
which shows that $\mc{K}$ is a non-twisting non-shearing congruence of null geodesics.

Conversely, suppose $\mc{K}$ is a non-twisting non-shearing congruence of null geodesics. Then we can find a metric $g$ in $\mbf{o}$ such that $\mc{K}$ is also non-expanding. The required linear connection is then the one found in Proposition \ref{prop:lin_conn_Kundt}.

For the final part of the proof, suppose that condition \eqref{item:non-sh-non-sh-cong} holds.
Then there is a metric $g$ in $\mbf{o}$ such that $\mc{K}$ is also non-expanding, i.e.\ $(\mc{M}, g, K)$ is a Kundt geometry. In particular, in the neighbourhood of any point, there exists coordinates $(u,v,x^i)$ such that $g$ takes the form \eqref{eq:Kundt_metric}, which we recast as
\begin{align*}
	g &  = 2 \,  \d u \d v + \ul{h}_{i j} \d x^i \d x^j  + 2 \, \d u  \left(  \ul{A}_i \d x^i + \ul{B} \d u \right) \, ,
\end{align*}
for some smooth functions $\ul{A}_i$ and $\ul{B}$ on $\ul{\mc{M}}$, and a family of symmetric tensors $\ul{h}_{i j}$ on $\ul{\mc{M}}$ smoothly parametrised by $u$, with the property that on restriction to each leaf of constant $u$, $\ul{h}_{i j}$ is a Riemannian metric. Note that $k = \frac{\partial}{\partial v}$ is a section of $K$, and $\kappa=g(k,\cdot) = \d u$.

When $n=2$, the metric $\ul{h}_{i j}$ is always conformally flat, and we may assume that $\ul{h}_{i j} = \e^{2 \varphi} \delta_{i j}$ where $\delta_{i j}$ is the standard Euclidean metric on $\R^2$, and $\varphi$ is a smooth function of $u$ and $x^i$. Now, define a new coordinate $\wt{v} = \e^{-2 \varphi} v$. Then
\begin{align*}
	g &  = \e^{2 \varphi} \left(2 \,  \d u \d \wt{v} + \delta_{i j} \d x^i \d x^j \right) + 2 \, \d u  \left(  \wt{\ul{A}}_i \d x^i + \wt{\ul{B}} \d u \right) \, ,
\end{align*}
for some functions $ \wt{\ul{A}}_i$ and $\wt{\ul{B}}$ of $u$ and $x^i$. This shows in particular that the Minkowski metric is in $\mbf{o}$, i.e., the $G$-structure is integrable.

In dimension $n>3$, on each leaf of constant $u$, the vanishing of the Weyl tensor $\ul{W}_{i j k \ell}$ of $\ul{h}_{i j}$ is a necessary and sufficient condition for $\ul{h}_{i j}$ to be conformally flat. Now, it is shown in \cite{Taghavi-Chabert2014} that condition \eqref{eq:W_cond} is equivalent to the equations
\begin{align*}
	k^a \delta^b_i \delta^c_j k^d W_{a b c d} & = 0 \, ,\\
	\left(k^a \delta^b_i \delta^c_j \delta^d_k W_{a b c d}\right)_\circ & = 0 \, , \\
	\left(\delta^a_i \delta^b_j \delta^c_k \delta^d_\ell W_{a b c d}\right)_\circ & = 0 \, .
\end{align*}
The first two conditions are the necessary curvature conditions for $\mc{K}$ to be a non-twisting non-shearing congruence of null geodesics -- see e.g. \cite{Podolsky2006a}. The last one is equivalent to $\ul{W}_{i j k \ell} = 0$  --- see for instance \cite{Taghavi-Chabert2014}.

Finally, we note that the condition \eqref{eq:W_cond} is independent of the choice of metric in $\mbf{o}$. Indeed, this condition refers only to the conformal structure on the screen bundle, and this is an invariant of the generalised optical geometry.

The converse is obvious.
\end{proof}

\begin{rem}
Proving an analogous statement for the second part of Theorem \ref{thm:integrable_optical} in dimension five, i.e.\ $n=3$, remains an open problem. The difficulty here is to find an appropriate curvature condition that is equivalent to the vanishing of the Cotton tensor of the (family of) metrics $\ul{h}_{i j}$.
\end{rem}

\begin{rem}
The connection given in Theorem \ref{thm:integrable_optical} is not unique.
\end{rem}

\begin{exa}[The Schwarzschild metric]
The generalised optical structure to which the Schwarzschild metric belongs is integrable in any dimensions. This follows from the following form  of the metric:
\begin{align*}
	g & = r^2 \left( 1 + \frac{1}{4} h_{k \ell} x^k x^\ell \right) h_{i j} \d x^i  \d x^j - 2 \d u \d r - 2 H \d r \d r \, .
\end{align*}
\end{exa}

\begin{exa}[The Myers-Perry metric]
The generalised optical structure to which the Kerr metric belongs is not integrable in any dimensions. Indeed, while one can cast the metric in Kerr-Schild form, the congruence of null geodesics is twisting. More specifically, let us write the Minkowski metric in standard coordinates $(t, x^\alpha , y^\alpha, z)_{\alpha=1,\ldots, m}$ in dimension $2m+2$,
\begin{align*}
	\eta & = - (\d t )^2 + \sum_{\alpha=1}^m \left( (\d x^\alpha)^2 +  (\d y^\alpha)^2 \right) + (\d z)^2  \, ,
\end{align*}
and let
\begin{align*}
	\kappa & = \d t + \sum_{\alpha=1}^m \frac{r(x^\alpha \d x^\alpha + y^\alpha \d y^\alpha)+a_\alpha(x^\alpha \d y^\alpha - y^\alpha \d x^\alpha)}{r^2 + a_\alpha^2} + \frac{z}{r} \d r \, ,
\end{align*}
and
\begin{align*}
	f & = \frac{M r^2}{1- \sum_{\alpha=1}^m  \frac{a_\alpha^2 \left( (x^\alpha)^2 + (y^\alpha)^2 \right)}{(r^2+a_\alpha^2)^2}} \frac{1}{\prod_{\alpha=1}^m (r^2 + a_\alpha^2)} \, .
\end{align*}
Here, the radial coordinate is defined by
\begin{align*}
	\sum_{\alpha=1}^m \frac{(x^\alpha)^2 + (y^\alpha)^2}{r^2 + a_\alpha^2} + \frac{z^2}{r^2} & = 1 \, .
\end{align*}
Then the Kerr-Myers-Perry metric in Kerr-Schild form is given by \cite{Myers1986}
\begin{align*}
	g & = \eta + f \kappa^2 \, .
\end{align*}
The odd-dimensional case is similar.
\end{exa}

	%
	%
	%
	
	
	\printbibliography
	
\end{document}